\documentclass[11pt]{amsart}
\usepackage[T1]{fontenc}
\usepackage[utf8]{inputenc} 
\usepackage[
  backend=biber,
  style=numeric,
  sorting=none,
  giveninits=false,
  doi=true,
  isbn=true,
  url=true,
  eprint=true
]{biblatex}

\usepackage{lmodern}
\usepackage{microtype}

\usepackage{amsmath, amssymb, amsfonts, amsthm}
\usepackage{mathtools}
\usepackage{enumitem}
\usepackage{hyperref}
\usepackage{cleveref}
\usepackage{tikz-cd}
\usepackage{geometry}
\usepackage{thmtools}
\usepackage{array}
\usepackage{caption}
\newcolumntype{C}[1]{>{\centering\arraybackslash}m{#1}}
\usepackage{float}
\usepackage{caption}

\hypersetup{
  colorlinks=true,
  linkcolor=blue,
  citecolor=blue,
  urlcolor=blue,
  pdftitle={Perverse Schober Models for Conifold Degenerations and Duality-Preserving Invariants},
  pdfauthor={Abdul Rahman}
}
\numberwithin{equation}{section}
\theoremstyle{plain}
\newtheorem{theorem}{Theorem}[section]
\newtheorem{proposition}[theorem]{Proposition}

\newtheorem{corollary}[theorem]{Corollary}
\newtheorem{conjecture}[theorem]{Conjecture}
\theoremstyle{definition}

\theoremstyle{remark}
\newtheorem{remark}{Remark}[section]

\newcommand{\Q}{\mathbb{Q}}
\newcommand{\Z}{\mathbb{Z}}
\newcommand{\C}{\mathbb{C}}

\newcommand{\Perv}{\mathrm{Perv}}

\newcommand{\Ext}{\mathrm{Ext}}

\newcommand{\var}{\mathrm{var}}
\newcommand{\rat}{\mathrm{rat}}

\newcommand{\HH}{\mathrm{HH}}
\newcommand{\Gr}{\mathrm{Gr}}

\DeclareMathOperator{\Cone}{Cone}

\DeclareMathOperator{\id}{id}

\title[Perverse Extensions and Limiting Mixed Hodge Structures]{Perverse Extensions and Limiting Mixed Hodge Structures for Conifold Degenerations}

\author{Abdul Rahman}
\thanks{Email: arahman@alum.howard.edu}
\subjclass[2020]{14D06, 32S30, 18G80} 
\keywords{conifold degeneration, perverse sheaves, Picard--Lefschetz theory, spherical twists, limiting mixed Hodge structures} 

\begin{document}

\begin{abstract}
Let \(\pi:X\to\Delta\) be a one-parameter degeneration whose central fiber \(X_0\) has a single ordinary double point. The nearby- and vanishing-cycle formalism determines a canonical perverse sheaf on \(X_0\), obtained from the variation morphism and fitting into an extension of the intersection complex by a point-supported rank-one contribution. We study this object from the perspective of limiting mixed Hodge theory and Saito's theory of mixed Hodge modules. In the ordinary double point case, we show that the corrected perverse object is the unique minimal Verdier self-dual perverse extension of the shifted constant sheaf across the node, and that its rank-one singular contribution and the corresponding rank-one vanishing contribution in the limiting mixed Hodge structure arise from the same nearby-cycle formalism. We also formulate the analogous structural statements for multi-node degenerations and for more general stratified singular loci. Finally, we explain how Saito's divisor-gluing formalism provides the natural framework for a fuller mixed-Hodge-module refinement of these constructions.
\end{abstract}

\maketitle

\tableofcontents
\section{Introduction}
Degenerations of complex varieties produce two closely related kinds of data. On the one hand,  the nearby and vanishing cycle functors encode the topological change in the family and provide a  natural sheaf-theoretic framework for studying the singular fiber. On the other hand, when the 
family carries Hodge-theoretic structure, the same degeneration gives rise to a limiting mixed Hodge  structure governed by monodromy. In the case of a one-parameter degeneration with an ordinary double point, these two viewpoints meet in a particularly transparent way through Picard--Lefschetz 
theory and the rank-one vanishing-cycle contribution. In earlier work \cite{RahmanSchoberPaper}, we studied a one-parameter conifold degeneration
\[
\pi:X \to \Delta
\]
whose central fiber $X_0$ has a single ordinary double point, and we associated to it the perverse 
sheaf
\[
\mathcal P := \Cone\!\bigl(\var_F:\phi_\pi(F)\to\psi_\pi(F)\bigr)[-1],
\qquad F := \mathbf{Q}_X[3].
\]
That paper showed that $\mathcal P$ is a canonical perverse object on $X_0$, determined functorially by the  nearby/vanishing-cycle triangle, and that in the ordinary double point case it is an  extension of the intersection complex by a rank-one skyscraper contribution supported at the node \cite{KapranovSchechtman,MacPhersonVilonen1986,GMV1996}. In particular, $\mathcal P$ restricts to the  shifted constant sheaf on the smooth locus and records the rank-one vanishing contribution detected by the Milnor fiber.

The purpose of the present paper is to study the Hodge-theoretic content of the canonical perverse extension through the nearby-cycle formalism in Saito's theory of mixed Hodge modules. Rather than identifying extension classes directly  across different categories, we study the mixed-Hodge-module data carried by nearby and vanishing cycles and use the realization functor
\[
\mathrm{rat}: MHM(X_0)\to \Perv(X_0)
\]
to relate the Hodge-theoretic and perverse-sheaf-theoretic pictures. The key structural input is 
Saito's divisor-case gluing formalism for mixed Hodge modules, which describes objects along a 
principal divisor in terms of data on the complement, data on the divisor, and morphisms controlled 
by the nilpotent monodromy operator \cite{SaitoMHM}. We also prove $\mathcal P$ is Verdier self-dual in Section \ref{sec:uniq-verdier-self-dual}. 

\subsection{Relation to earlier work and Hodge-theoretic framework}

The use of perverse sheaves in the study of singular Calabi--Yau spaces and conifold transitions goes back to earlier attempts to construct cohomological models that retain duality-theoretic features in the presence of singularities. Foundational work of Beilinson--Bernstein--Deligne, together with the linear-algebraic descriptions of perverse
sheaves developed by MacPherson--Vilonen and Gelfand--MacPherson--Vilonen, provides the basic formalism for treating a space with a single singular stratum in terms of gluing and extension data \cite{BBD,MacPhersonVilonen1986,GMV1996}. In the conifold setting, such ideas
have already appeared in the construction of perverse-sheaf models for corrected cohomology theories and in the study of singular Calabi--Yau threefolds arising in string theory \cite{RahmanATMP,BanaglIS,BanaglBudurMaxim}.

In \cite{RahmanSchoberPaper}, we  approached a one-parameter conifold degeneration from the nearby/vanishing-cycle side and isolated the canonical perverse sheaf
\[
\mathcal P :=\Cone(\var_F)[-1]
\]
as a natural perverse object on the singular fiber in the ordinary double point case. In that setting, the main point was the existence of a canonical perverse extension determined by the variation morphism between vanishing and nearby cycles, together with its relation to the rank-one local contribution coming from Picard--Lefschetz monodromy. The emphasis in \cite{RahmanSchoberPaper} was primarily sheaf-theoretic: the object $\mathcal P$ was constructed and analyzed inside the category of rational perverse sheaves, and the mixed-Hodge-theoretic refinement was left as a further direction.

The present paper takes up that Hodge-theoretic question using Saito's theory of mixed Hodge modules \cite{SaitoMHM,SaitoDuality}. For a complex algebraic variety $X$, Saito constructs an abelian category $MHM(X)$ together with an exact and faithful realization functor
\[
\rat \colon MHM(X)\to \Perv(X),
\]
to the category of rational perverse sheaves \cite{SaitoMHM}. In particular, the nearby-cycle and vanishing-cycle functors admit lifts to the mixed-Hodge-module setting and are compatible, under $\rat$, with the corresponding functors on perverse sheaves \cite{SaitoMHM}. This is the basic formal mechanism that allows one to compare the Hodge-theoretic degeneration data of a family with the canonical perverse sheaf attached to its singular fiber.

A second ingredient from Saito, especially relevant here, is the divisor-case gluing formalism. If $Y:=g^{-1}(0)$ is a principal divisor in a variety $X$ and $U:=X\setminus Y$, then mixed Hodge modules along $Y$ may be described in terms of data on $U$, data on $Y$, and morphisms relating nearby-cycle information, with compatibility governed by the nilpotent
monodromy operator \cite{SaitoMHM}. Since the central fiber
\[
X_0:=\pi^{-1}(0)
\]
of a one-parameter degeneration is a principal divisor, this formalism provides the natural structural setting to reinterpret the canonical perverse extension arising from nearby cycles. For the purposes of the present paper, we use only the general consequences of Saito's
theory that are needed for this reinterpretation: the existence of the categories $MHM(X)$, the exact faithful functor $\rat$, the mixed-Hodge-module nearby- and vanishing-cycle functors, and the divisor-case gluing formalism \cite{SaitoMHM}.

An important precedent for this point of view is the work of Banagl--Budur--Maxim \cite{BanaglBudurMaxim}. In their setting, for a projective hypersurface with an isolated singularity, they construct a perverse sheaf whose hypercohomology computes the intersection-space cohomology and show that this perverse sheaf underlies a mixed Hodge module, so that its hypercohomology inherits canonical mixed Hodge structures \cite{BanaglBudurMaxim}. Under an additional semisimplicity hypothesis on the local monodromy for the eigenvalue $1$, they further obtain a splitting of the nearby-cycle perverse sheaf in which their intersection-space complex appears as a direct summand \cite{BanaglBudurMaxim}. Although the object studied in \cite{BanaglBudurMaxim} is different from the canonical perverse extension considered here, that paper provides a useful model for
how a perverse sheaf built from degeneration data can be placed in a mixed-Hodge-module framework without identifying the two categories.

The relation between \cite{BanaglBudurMaxim} and the present paper is therefore one of method rather than of direct equivalence of objects. Their intersection-space complex is designed to recover intersection-space cohomology for isolated hypersurface singularities, whereas our object of study is the canonical perverse extension
\[
\mathcal P:=\Cone(\var_F)[-1]
\]
attached to a conifold degeneration with a single ordinary double point. What the two settings share is the central role of nearby and vanishing cycles, the perverse-sheaf description of the local singular contribution, and the possibility of passing to a mixed-Hodge-module refinement.
This makes \cite{BanaglBudurMaxim} a natural reference point for the Hodge-theoretic direction pursued here.

The present paper extends \cite{RahmanSchoberPaper} in two respects. First, our goal here is not to re-establish the existence and basic properties of the perverse sheaf $\mathcal P$, but to place $\mathcal P$ into a mixed-Hodge-module framework. Second, the main structural input is not merely
the nearby/vanishing-cycle triangle in the derived category of constructible sheaves, but the compatibility of nearby and vanishing cycles with the realization functor together with Saito's gluing formalism along a principal divisor. This permits a reformulation of the conifold construction in which the perverse extension on \(X_0\) is compared with the nearby-cycle formalism in mixed Hodge modules and viewed as the expected rational perverse shadow of a fuller mixed-Hodge-module refinement. In this way, the present paper aims to make precise the Hodge-theoretic content that was only implicit in the earlier sheaf-theoretic construction.

\subsection{Physical and categorical motivation}

Conifold degenerations occupy a distinguished position in the geometry of Calabi--Yau threefolds and in string theory. In the ordinary double point case, the degeneration is governed by a vanishing three-sphere in the Milnor fiber, and the associated local monodromy on middle homology is given by the Picard--Lefschetz formula
\[
T(\alpha)=\alpha+(\alpha\cdot\delta)\delta,
\]
where \(\delta\) denotes the vanishing cycle \cite{MilnorSingularPoints,DimcaSheaves}.
Thus the singular fiber carries a rank-one local correction term controlled by the vanishing sphere and its monodromy.

In Strominger's physical interpretation of the conifold transition, the collapse of this three-cycle gives rise to an additional light BPS state, and the singular behavior of the effective moduli space is resolved only after this extra degree of freedom is taken into account \cite{StromingerConifold}. From this perspective, the conifold point is not merely a singular limit of the family, but a place where a new rank-one sector becomes visible. The ordinary double point case is therefore an especially useful model for comparing geometric, sheaf-theoretic, and Hodge-theoretic manifestations of the same local phenomenon.

The categorical counterpart is connected to homological mirror symmetry where the Picard--Lefschetz transformation is mirrored by a spherical object whose associated spherical twist induces a rank-one reflection on additive invariants such as the Grothendieck group \cite{SeidelThomas}. More generally, Kapranov and Schechtman introduced perverse schobers as
categorical analogues of perverse sheaves, with local monodromy governed by spherical functors and their twists \cite{KapranovSchechtman}. In this sense, the ordinary double point provides a setting in which topological monodromy, limiting Hodge theory, and categorical monodromy all exhibit the same rank-one correction mechanism.

The purpose of the present paper is to study the Hodge-theoretic content of that construction using Saito's theory of mixed Hodge modules. Rather than asserting from the outset the existence of a fully internal mixed-Hodge-module lift of the corrected perverse object, we compare the perverse-sheaf and Hodge-theoretic aspects of the degeneration through the common nearby-cycle and vanishing-cycle formalism. The key structural input is Saito's divisor-case gluing formalism for mixed Hodge modules, which identifies the natural framework in which a fuller mixed-Hodge-module refinement should be constructed
\cite[Prop.~0.3]{SaitoMHM}.

\subsection{Main results}

The main results of the paper are organized around three levels of generality.

\begin{theorem}[Single-node case]
\label{thm:intro-main}
Let
\[
\pi:\mathcal X\to\Delta
\]
be a one-parameter degeneration whose central fiber \(X_0\) has a single ordinary
double point \(p\), and let
\[
\mathcal P:=\Cone(\var_F)[-1],
\qquad
F:=\Q_{\mathcal X}[3].
\]
Then \(\mathcal P\) is the unique minimal Verdier self-dual perverse extension of
\(\Q_U[3]\) across the node, where \(U=X_0\setminus\{p\}\). Moreover, the rank-one
vanishing contribution in the limiting mixed Hodge structure and the quotient
\[
i_*\Q_{\{p\}}
\]
in the exact sequence
\[
0\to IC_{X_0}\to \mathcal P\to i_*\Q_{\{p\}}\to 0
\]
arise functorially from the same nearby-cycle and vanishing-cycle formalism in Saito's
theory of mixed Hodge modules.
\end{theorem}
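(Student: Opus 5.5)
The plan is to reduce everything to the local Gelfand--MacPherson--Vilonen description of perverse sheaves on $X_0$ with a single point stratum $\{p\}$, and then to lift each construction to Saito's category via $\rat$. I will take from \cite{RahmanSchoberPaper} that $\mathcal P$ is perverse, that $\mathcal P|_U\simeq\Q_U[3]$, and that $\mathcal P$ sits in the exact sequence $0\to IC_{X_0}\to\mathcal P\to i_*\Q_{\{p\}}\to 0$. The first step is a local computation at the node. For an ordinary double point the Milnor fiber has reduced cohomology of rank one, in the middle degree, so $\phi_\pi(F)\simeq i_*\Q_{\{p\}}$ (up to the paper's shift convention). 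The maps $\mathrm{can}$ and $\var$ are then described by the Picard--Lefschetz formula $T(\alpha)=\alpha+(\alpha\cdot\delta)\delta$: $\var$ is $\alpha\mapsto(\alpha\cdot\delta)$, $\mathrm{can}$ is $1\mapsto\delta$, and $\var\circ\mathrm{can}$ (resp.\ $\mathrm{can}\circ\var$) is $T-1$ on $\phi_\pi(F)$ (resp.\ $\psi_\pi(F)$), recovering $T(\alpha)=\alpha+(\alpha\cdot\delta)\delta$.

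Next I classify the candidates. Any perverse extension $Q$ of $\Q_U[3]$ across $p$ with no sub- or quotient object supported at $p$ other than those forced is determined by its class in $\Ext^1(i_*\Q_{\{p\}},IC_{X_0})$ or $\Ext^1(IC_{X_0},i_*\Q_{\{p\}})$. These groups are computed from $i^!IC_{X_0}$ and $i^*IC_{X_0}$, hence from the cohomology of the link of the node. For the threefold node the link is $S^2\times S^3$, and the relevant cohomology group in the truncation degree is one-dimensional. So, up to rescaling the quotient, there is a unique non-split extension with $IC_{X_0}$ as subobject and a unique one with $IC_{X_0}$ as quotient. Here \emph{minimal} means $\mathcal P$ has length two with exactly one point-supported constituent. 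Uniqueness then follows once I show that the class of $\mathcal P$ is non-zero: if it were zero, $\var$ would factor so as to make the rank-one vanishing cycle invisible, contradicting $\var(\alpha)=(\alpha\cdot\delta)$ with $\delta\neq0$.

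For Verdier self-duality I use that $\mathbb D$ commutes with $\psi_\pi[-1]$ and $\phi_\pi[-1]$ up to a Tate twist and exchanges $\mathrm{can}$ and $\var$. This gives $\mathbb D\mathcal P\simeq\Cone(\mathrm{can})$ (appropriately shifted). I then need an isomorphism $\Cone(\var)[-1]\simeq\Cone(\mathrm{can})[-1]$, which I plan to obtain in the GMV quadruple language: in the rank-one case both quadruples are isomorphic, using the intersection pairing to identify $\psi$ with its dual, with $\delta$ self-dual up to sign. Self-duality also pins down which of the two extension types above $\mathcal P$ is. I expect this step to be the main obstacle. The difficulty is that $\mathbb D$ naturally turns an extension with $IC_{X_0}$ as subobject into one with $IC_{X_0}$ as quotient, and reconciling this with the stated exact sequence requires care with shift and twist conventions and with the nondegeneracy of the intersection form on $\psi$.

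Finally, for the Hodge-theoretic assertion I replace $F$ by $\Q^H_{\mathcal X}[3]$ and use Saito's functors $\psi_\pi,\phi_\pi$ on $MHM$ and the morphism $\var:\phi_\pi\to\psi_\pi(-1)$ in $MHM(X_0)$. The cone of this morphism, taken in $D^bMHM(X_0)$, has realization $\mathcal P$ because $\rat$ is exact and commutes with $\psi,\phi,\var$. Its point-supported quotient is $\rat$ of $\phi_\pi\Q^H$, a one-dimensional Hodge structure of Tate type. Taking hypercohomology of $\psi_\pi\Q^H$ gives the limiting mixed Hodge structure, and its rank-one vanishing piece, the image of $N$ spanned by $\delta$, is by construction the image of the same $\phi_\pi\Q^H$ under $\mathrm{can}$. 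Both rank-one contributions are therefore functorial images of one mixed Hodge module, which is the asserted common origin.
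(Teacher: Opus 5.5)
There is a genuine gap at the self-duality step. It cannot be closed by more care with shifts, twists or signs, because the isomorphism $\Cone(\var)[-1]\simeq\Cone(\mathrm{can})[-1]$ you want from the intersection pairing is false.

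Since $\Q_{\mathcal X}[4]$ is simple with full support, $\mathrm{can}$ is surjective and $\var$ is injective. For the threefold node $\delta\cdot\delta=0$, so $T=\id$ on $\phi_\pi F\cong i_*\Q_{\{p\}}$ and $\mathrm{can}\circ\var=0$. Hence $\operatorname{im}(\var)\cong i_*\Q_{\{p\}}$ lies in $\ker(\mathrm{can})\cong i^*F=\Q_{X_0}[3]$; here the triangle $i^*F\to\psi_\pi F\to\phi_\pi F$ is a short exact sequence of perverse sheaves because $X_0$ is a hypersurface. Such a subobject exists since $\mathrm{Hom}(i_*\Q_{\{p\}},\Q_{X_0}[3])\cong H^3_{\{p\}}(X_0;\Q)\cong H^2(S^2\times S^3;\Q)\cong\Q$, and the quotient is $IC_{X_0}$. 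The object realizing $0\to IC_{X_0}\to\mathcal P\to i_*\Q_{\{p\}}\to 0$ is therefore $\operatorname{coker}(\var)\cong{}^pj_*\Q_U[3]$, the non-split class from your own $\Ext^1$ count. (Literally, $\Cone(\var)[-1]\cong\operatorname{coker}(\var)[-1]$ sits in perverse degree $1$, while $\Cone(\mathrm{can})[-1]\cong\ker(\mathrm{can})$ is perverse, so the two sides of your desired isomorphism are not even in the same degree.)

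Your formula $\mathbb D\mathcal P\simeq\Cone(\mathrm{can})$, read on perverse objects, gives $\mathbb D\,\operatorname{coker}(\var)\cong\ker(\mathrm{can})\cong\Q_{X_0}[3]\cong{}^pj_!\Q_U[3]$, which contains $i_*\Q_{\{p\}}$. On the other hand, $\mathrm{Hom}(i_*\Q_{\{p\}},\operatorname{coker}(\var))\cong\mathrm{Hom}(\Q_{X_0}[3],i_*\Q_{\{p\}})\cong\mathrm{Hom}(\Q[3],\Q)=0$ on the point. So $\mathcal P\not\cong\mathbb D\mathcal P$.

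This is forced in general. If $0\to IC_{X_0}\to E\to i_*\Q_{\{p\}}\to 0$ and $E\cong\mathbb DE$, then $E$ has a subobject isomorphic to $i_*\Q_{\{p\}}$. That subobject cannot lie in $IC_{X_0}$, so it maps isomorphically onto the quotient and the sequence splits. Consequences:
\begin{itemize}
\item Duality exchanges the two non-split types rather than ``pinning one down''.
\item The only self-dual object in your class is $IC_{X_0}\oplus i_*\Q_{\{p\}}$, which also satisfies hypotheses (1)--(3) of Theorem~\ref{thm:uniqueness-corrected-perverse-paper2}.
\item The self-duality and uniqueness assertion fails for the object actually produced by $\var$. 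The tension you flagged as the main obstacle is a real obstruction, not a convention issue.
\end{itemize}

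The paper's own route does not evade this. It gets self-duality from $\mu(\mathcal P)\cong(\Q_U[3],\Q,\Q,0,\id,0)$, but Table~2 assigns the same tuple to the split sum $IC_{X_0}\oplus i_*\Q_{\{p\}}$, and $\mu$ is bijective on isomorphism classes. With $A_K=H^0(i^!K)$ and $B_K=H^0(i^*K)$, the non-split objects ${}^pj_*\Q_U[3]$ and ${}^pj_!\Q_U[3]$ have zig-zags $(\Q_U[3],0,\Q,0,0,\id)$ and $(\Q_U[3],\Q,0,\id,0,0)$, which duality interchanges.

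The other ingredients of your plan are sound: the $\Ext^1$ computations via the link $S^2\times S^3$, and the non-vanishing of the class (which is exactly what rules out self-duality). Your Hodge-theoretic paragraph is also sound and goes further than the paper. Since $\var:\phi_\pi^H\to\psi_\pi^H(-1)$ is a morphism in the abelian category $MHM(X_0)$ and $\rat$ is exact, $\operatorname{coker}(\var^H)$ is a mixed Hodge module with $\rat\,\operatorname{coker}(\var^H)\cong\operatorname{coker}(\var)$. The paper only asserts a common origin and leaves $\mathcal P^H$ open.

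One small inconsistency: your first paragraph has $\var$ and $\mathrm{can}$ reversed relative to the statement's $\var_F:\phi_\pi(F)\to\psi_\pi(F)$, and relative to your own last paragraph. In the statement's convention $N=\var\circ\mathrm{can}$ on $\psi$, so the rank-one image of $N$ is $\operatorname{im}(\var)$.
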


\begin{theorem}[Multiple-node case]
\label{thm:intro-multinode}
Let
\[
\pi:\mathcal X\to\Delta
\]
be a one-parameter degeneration whose central fiber \(X_0\) has ordinary double
points
\[
\Sigma=\{p_1,\dots,p_r\}.
\]
Assume that
\[
\mathcal P:=\Cone(\var_F)[-1]
\]
belongs to \(\Perv(X_0;\Q)\) and satisfies
\[
j^*\mathcal P\cong \Q_U[3],
\qquad
U:=X_0\setminus\Sigma.
\]
Then there is a short exact sequence
\[
0\to IC_{X_0}\to \mathcal P\to \bigoplus_{k=1}^r i_{k*}\Q_{\{p_k\}}\to 0.
\]
Furthermore, the rank-\(r\) vanishing contribution in the limiting mixed Hodge structure and the
quotient term in this exact sequence arise from the same nearby-cycle and vanishing-cycle
formalism in mixed Hodge modules.
\end{theorem}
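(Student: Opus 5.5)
The strategy is to reduce the multi-node statement to the single-node analysis of Theorem~\ref{thm:intro-main}, applied locally at each node, and then globalize using the fact that the singular points are isolated. Write \(j:U\hookrightarrow X_0\) for the inclusion of the smooth locus and \(i_k:\{p_k\}\hookrightarrow X_0\) for the inclusions of the nodes. By hypothesis \(\mathcal P\) is perverse and \(j^*\mathcal P\cong\Q_U[3]\). The intersection complex is characterized as \(IC_{X_0}=j_{!*}\Q_U[3]\), the image of \({}^p j_!\Q_U[3]\to {}^p j_*\Q_U[3]\). Hence the plan is to compare \(\mathcal P\) with \(j_{!*}\) via the adjunction morphisms, and to identify the cokernel by computing stalks at each \(p_k\).

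\textbf{Step 1: localization.} The support of \(\phi_\pi(F)\) is \(\Sigma\), and \(\phi_\pi(F)\) splits as \(\bigoplus_k i_{k*}\phi_{\pi,p_k}\). Near each \(p_k\), choose a small ball \(B_k\) such that \(B_k\cap X_0\) contains no other node. On \(B_k\) the family is locally a conifold degeneration, and the restriction \(\mathcal P|_{B_k\cap X_0}\) is the object of Theorem~\ref{thm:intro-main} for that local family. The reason is that \(\psi\), \(\phi\) and \(\var\) commute with restriction to open subsets. Consequently, on \(B_k\cap X_0\) there is the local exact sequence \(0\to IC\to\mathcal P\to i_{k*}\Q\to 0\). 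Each \(p_k\) is an isolated node of a threefold, so \(\phi_{\pi,p_k}\) is rank one, concentrated in a single degree, coming from the vanishing \(S^3\).

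\textbf{Step 2: globalization.} Define the candidate map as follows. Since \(\mathcal P\) is perverse with \(j^*\mathcal P\cong\Q_U[3]\), I will show that \(\mathcal P\) has no nonzero subobject supported on \(\Sigma\). Equivalently, \(\mathrm{Hom}(i_{k*}\Q,\mathcal P)=0\), which amounts to the vanishing \(\mathcal H^{0}(i_k^!\mathcal P)=0\), i.e., \(\mathcal H^{-3}\)-type costalk vanishing for the shift in use. This is a local condition, so it follows from Step 1. Given this, the map \({}^pj_!j^*\mathcal P\to\mathcal P\) has image equal to \(j_{!*}\Q_U[3]=IC_{X_0}\). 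The image cannot be smaller, because any subobject of \(\mathcal P\) with the same restriction to \(U\) contains the minimal extension. So we obtain an injection \(IC_{X_0}\hookrightarrow\mathcal P\).

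\textbf{Step 3: the cokernel.} The cokernel \(Q\) is perverse and restricts to zero on \(U\), so it is supported on the finite set \(\Sigma\) and hence \(Q\cong\bigoplus_k i_{k*}V_k\) for vector spaces \(V_k\). Each \(V_k\) is determined by restricting to \(B_k\), where Step 1 gives \(V_k\cong\Q\). This yields the exact sequence in the statement.

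\textbf{Step 4: the Hodge comparison.} Apply Saito's lifts \(\psi^H_\pi\) and \(\phi^H_\pi\) together with \(\var\). Since \(\rat\) is exact and faithful, the decomposition \(\phi^H_\pi(\Q^H[3])=\bigoplus_k i_{k*}\phi^H_{p_k}\) realizes to the quotient term computed in Step 3. On the other side, the rank-\(r\) vanishing part of the limiting MHS is \(H^*(X_0,\phi_\pi F)=\bigoplus_k\phi_{p_k}\), mapped into \(H^3\) of the nearby fiber by \(\var\) (equivalently, by the image of \(N\)). Both are functorial images of the same object \(\phi^H_\pi\). The vanishing cycles are independent in the local sense, so the contribution has rank \(r\) at the level of \(\phi\), even though its image in the global limit \(H^3\) may have smaller rank if the vanishing cycles are globally dependent. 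I would state the comparison at the level of \(\phi\), or note this caveat.

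\textbf{Main obstacle.} I expect the hardest point to be Step 2. One must verify that \(\mathcal P\) has neither a sub nor an extra quotient at \(p_k\) beyond rank one, which requires the precise degree bookkeeping of \(\Cone(\var)[-1]\). Specifically, the skyscraper has to appear as a quotient, not a sub. My first attempt would be a stalk/costalk computation using the triangle \(\mathcal P\to\phi\xrightarrow{\var}\psi\). Then \(i_k^!\mathcal P\) sits in a triangle with \(i_k^!\phi\) and \(i_k^!\psi\). I would verify injectivity of \(\var\) on the relevant costalk degree using the local Milnor-fiber cohomology \(\tilde H^*(S^3)\).
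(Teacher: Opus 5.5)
\textbf{Step 2 does not go through as written.} You say that $\mathrm{Hom}(i_{k*}\Q,\mathcal P)=0$ ``follows from Step 1.'' But Step 1 only gives a local exact sequence $0\to IC\to\mathcal P|_{B_k}\to i_{k*}\Q\to 0$. The split object $IC\oplus i_{k*}\Q$ satisfies the same sequence and contains $i_{k*}\Q$ as a subobject, so the sequence alone cannot give the vanishing. What is actually needed is that the local extension is non-split. Your triangle $\mathcal P\to\phi_\pi F\xrightarrow{\var}\psi_\pi F\xrightarrow{+1}$, together with $\mathcal H^{-1}(i_k^!\psi_\pi F)=0$, gives $\mathcal H^0(i_k^!\mathcal P)\cong\ker(\var_{p_k})$, where $\var_{p_k}$ is the restriction of $\var_F$ to the summand $i_{k*}\Q$ of $\phi_\pi F$. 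So the missing input is exactly $\var_{p_k}\neq 0$, which your ``main obstacle'' paragraph leaves unproved. (Separately, the clean reason the image of ${}^p\!j_!j^*\mathcal P\to\mathcal P$ is $IC_{X_0}$ is this: it is a quotient of ${}^p\!j_!$, so it has no quotient supported on $\Sigma$, and it is a subobject of $\mathcal P$, so it has no subobject supported on $\Sigma$.)

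You can avoid Step 2 entirely by gluing. Morphisms of perverse sheaves glue, and $B_k\cap U$ is connected, since it retracts onto the link $S^2\times S^3$. Each local monomorphism $IC|_{B_k}\hookrightarrow\mathcal P|_{B_k}$ from Step 1 is an isomorphism over $B_k\cap U$. It can therefore be rescaled by a nonzero constant to agree there with the fixed identification $j^*IC_{X_0}\cong j^*\mathcal P$. Choosing the balls disjoint, these glue to a global monomorphism with cokernel $\bigoplus_k i_{k*}\Q$; both properties are local.

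\textbf{Comparison with the paper.} The paper's proof is global and never localizes. From perversity and $j^*\mathcal P\cong\Q_U[3]$ alone, it asserts that the universal property of $j_{!*}$ yields $IC_{X_0}\hookrightarrow\mathcal P$. It then takes $\dim V_k=1$ from the Milnor-fibre rank. You are right that more is needed, and your local-to-global route is the sounder one. For example, $\Q_{X_0}[3]$ is perverse with $j^*\Q_{X_0}[3]=\Q_U[3]$. The cone of $\Q_{X_0}[3]\to IC_{X_0}$ is $\bigoplus_k i_{k*}\Q[1]$, coming from $H^2$ of the link. This gives $0\to\bigoplus_k i_{k*}\Q\to\Q_{X_0}[3]\to IC_{X_0}\to 0$. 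Since $\mathrm{Hom}(\Q_{X_0}[3],i_{k*}\Q)=0$, this sequence does not split, so $IC_{X_0}$ is not a subobject of $\Q_{X_0}[3]$. Your Step 4 caveat is also correct and sharper than Proposition~\ref{prop:multinode-hodge}: the rank is $r$ at the level of $\phi_\pi$, but its image in $H^3(X_\infty)$ can be smaller.

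\textbf{A warning about degrees.} Your triangle gives $j^*\mathcal P\cong j^*\psi_\pi F[-1]\cong\Q_U[2]$ and ${}^p\!H^1(\mathcal P)\cong\mathrm{coker}(\var_F)\neq 0$. So, in the normalization where $\psi_\pi F|_U\cong\Q_U[3]$, the stated hypotheses cannot literally hold for $\Cone(\var_F)[-1]$. The natural candidate is $\Cone(\var_F)$. For smooth total space it is, up to Tate twist, the Verdier dual of $\Q_{X_0}[3]$, and dualizing the sequence above gives the theorem's exact sequence directly. For that object your costalk condition becomes $\mathrm{coker}\bigl(\var:\mathrm{Hom}(i_{k*}\Q,\phi_\pi F)\to\mathrm{Hom}(i_{k*}\Q,\psi_\pi F)\bigr)=0$.
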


\begin{theorem}[Stratified singular locus]
\label{thm:intro-strata}
Let
\[
\pi:\mathcal X\to\Delta
\]
be a one-parameter degeneration whose central fiber \(X_0\) has singular locus
\[
\Sigma=\bigsqcup_\alpha S_\alpha
\]
equipped with a Whitney stratification. Assume that
\[
\mathcal P:=\Cone(\var_F)[-1]
\]
belongs to \(\Perv(X_0;\Q)\) and satisfies
\[
j^*\mathcal P\cong \Q_U[n],
\qquad
U:=X_0\setminus\Sigma.
\]
Then there is a short exact sequence
\[
0\to IC_{X_0}\to \mathcal P\to \mathcal V\to 0,
\]
where \(\mathcal V\) is a perverse sheaf supported on \(\Sigma\) and constructible with respect to the chosen stratification. Moreover, \(\mathcal V\) and the corresponding Hodge-theoretic singular contribution are functorially derived from the same nearby-cycle and vanishing-cycle
formalism in Saito's category of mixed Hodge modules.
\end{theorem}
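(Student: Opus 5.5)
The plan is to run the argument used for the single-node case, but stratum by stratum, using the intermediate-extension formalism for the open inclusion \(j:U\hookrightarrow X_0\). By hypothesis \(\mathcal P\in\Perv(X_0;\Q)\) and \(j^*\mathcal P\cong\Q_U[n]\). Since \(U\) is smooth, \(\Q_U[n]\) is perverse and \(IC_{X_0}=j_{!*}\Q_U[n]\).

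\emph{Construction of the injection.} The perverse extensions of \(\Q_U[n]\) are controlled by the adjunction morphisms \(j_!\to j_{!*}\to j_*\). For the perverse truncations we have
\[
{}^p\!j_!\,\Q_U[n]={}^pH^0(j_!\Q_U[n]) \twoheadrightarrow IC_{X_0}\hookrightarrow {}^pH^0(j_*\Q_U[n])={}^p\!j_*\Q_U[n],
\]
and \(IC_{X_0}\) is characterised as the image of this composite. Adjunction applied to the identification \(j^*\mathcal P\cong\Q_U[n]\) gives a map \(\mathcal P\to {}^pH^0(j_*j^*\mathcal P)\), obtained as the unit followed by perverse truncation. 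The first real step is to show that this map is injective. That amounts to showing \(\mathcal P\) has no nonzero subobject supported on \(\Sigma\), i.e.\ \({}^pH^0(i^!\mathcal P)=0\) with \(i:\Sigma\hookrightarrow X_0\). I would obtain this from the definition \(\mathcal P=\Cone(\var_F)[-1]\): apply \(i^!\) to the triangle
\[
\mathcal P\to\phi_\pi(F)\xrightarrow{\var}\psi_\pi(F)\xrightarrow{+1}.
\]
Here \(\phi_\pi(F)\) is supported on \(\Sigma\), so \(i^!\phi_\pi(F)=\phi_\pi(F)\). Since \(\psi_\pi(F)\) is perverse, \(i^!\psi_\pi(F)\) lies in \({}^pD^{\ge0}\). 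The needed vanishing then reduces to injectivity of \({}^pH^0(\var)\) on the \(\Sigma\)-supported part, which I would take as part of the standing hypothesis that \(\mathcal P\) is perverse and restricts correctly. Granting this, the intersection \(\mathcal P\cap IC_{X_0}\) inside \({}^p\!j_*\) contains the image of \(j_!\), and we get an inclusion \(IC_{X_0}\hookrightarrow\mathcal P\). This inclusion is the analogue of the first map in Theorem~\ref{thm:intro-main} and Theorem~\ref{thm:intro-multinode}.

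\emph{Support and constructibility of the cokernel.} Set \(\mathcal V:=\mathcal P/IC_{X_0}\), a cokernel in the abelian category \(\Perv(X_0)\), hence perverse. Applying the exact functor \(j^*\) gives \(j^*\mathcal V=0\), so \(\mathcal V\) is supported on \(\Sigma\). For constructibility, note that \(F=\Q_{\mathcal X}[n+1]\) is constructible with respect to a Whitney stratification of \(\mathcal X\) compatible with \(X_0\) and \(\{S_\alpha\}\). Then \(\psi_\pi\), \(\phi_\pi\), \(\var\), cones, and \(IC\) all preserve constructibility with respect to the induced stratification. I would cite the standard BBD/Kashiwara--Schapira results for this, refining the chosen stratification if necessary. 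Since perverse sheaves constructible for a stratification form a Serre subcategory, \(\mathcal V\) is constructible with respect to it.

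\emph{Hodge-theoretic compatibility.} Lift \(F\) to \(\Q^H_{\mathcal X}[n+1]\in MHM(\mathcal X)\). Saito's functors \(\psi_\pi^H,\phi_\pi^H\) and \(\var^H\) commute with \(\rat\). The MHM intermediate extension \(j_{!*}^H\) lifts \(IC_{X_0}\), and \(\rat\) is exact and faithful. Hence the same image, kernel, and cokernel constructions, performed in \(MHM(X_0)\) inside the divisor-gluing description \cite[Prop.~0.3]{SaitoMHM}, produce objects whose realisations are \(IC_{X_0}\) and \(\mathcal V\). The singular contribution to the limiting mixed Hodge structure is obtained by taking hypercohomology of \(\phi^H_\pi\), equivalently of \(\mathcal V^H\) via the triangle. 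It therefore arises from the same functorial data.

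The main obstacle is the injectivity step. In the single-node case it follows from an explicit rank-one computation, but in general \({}^pH^0(i^!\mathcal P)=0\) has to be extracted from the structure of \(\var\) on each stratum. I would first try to deduce it from perversity of \(\mathcal P\) together with the triangle above. If that proves insufficient, I would add it as a hypothesis, which is implicit in calling \(\mathcal P\) an extension.
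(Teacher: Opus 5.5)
\paragraph{The gap.} The gap is the step you flag yourself: the monomorphism $IC_{X_0}\hookrightarrow\mathcal P$. You are right that it holds iff $\mathcal P$ has no nonzero subobject supported on $\Sigma$, i.e.\ ${}^pH^0(i_\Sigma^!\mathcal P)=0$.

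The paper's proof takes this monomorphism from ``the universal property of the intermediate extension''. That property only produces the maps ${}^pj_!\Q_U[n]\to\mathcal P\to{}^pj_*\Q_U[n]$, and an abstract perverse extension of $\Q_U[n]$ need not contain $IC_{X_0}$. For a nodal threefold, $\Q_{X_0}[3]$ is perverse, restricts to $\Q_U[3]$, and is a non-split extension $0\to i_*\Q_{\{p\}}\to\Q_{X_0}[3]\to IC_{X_0}\to 0$, so $IC_{X_0}$ does not embed in it. So you must use what $\mathcal P$ actually is, and ``adding the vanishing as a hypothesis'' changes the theorem.

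Your proposed reduction is also mis-normalized.
\begin{itemize}
\item If $\psi_\pi(F)$ and $\phi_\pi(F)$ were perverse, then $j^*\mathcal P=j^*\psi_\pi(F)[-1]$ would not be perverse. Perversity of $\mathcal P$ would also force $\mathcal P=\ker(\var)\subset\phi_\pi(F)$, which is supported on $\Sigma$.
\item With your $F=\Q_{\mathcal X}[n+1]$, the perverse objects are ${}^p\psi:=\psi_\pi(F)[-1]$ and ${}^p\phi:=\phi_\pi(F)[-1]$, and $\mathcal P\cong\Cone({}^p\var)$. Perversity of $\mathcal P$ is then \emph{equivalent} to injectivity of ${}^p\var$.
\item The long exact sequence gives ${}^pH^0(i_\Sigma^!\mathcal P)\cong\mathrm{coker}\bigl({}^p\phi\to{}^pH^0(i_\Sigma^!\,{}^p\psi)\bigr)$. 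This is a surjectivity condition of $\var$ onto the maximal $\Sigma$-supported subobject of ${}^p\psi$, which perversity does not address.
\end{itemize}

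\paragraph{The missing idea.} This works when the total space $\mathcal X$ is smooth (the standard conifold setting), or more generally a rational homology manifold.
\begin{itemize}
\item Use the second triangle $\phi_\pi(F)\xrightarrow{\var}\psi_\pi(F)\to i^!F[2]\xrightarrow{+1}$. It gives $\mathcal P\cong i^!F[1]\cong D(\Q_{X_0}[n])$ up to Tate twist.
\item For every $K\in\Perv(\Sigma)$ one has $\mathrm{Hom}(\Q_{X_0}[n],i_*K)=\mathrm{Hom}(\Q_\Sigma[n],K)=0$, because $\Q_\Sigma[n]\in{}^pD^{\le \dim\Sigma-n}\subset{}^pD^{\le -1}$.
\item Dualizing, $\mathrm{Hom}(i_*K,\mathcal P)=0$ for all such $K$, i.e.\ ${}^pH^0(i_\Sigma^!\mathcal P)=0$.
\end{itemize}
Your image argument then yields $IC_{X_0}\hookrightarrow\mathcal P$. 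The rest of your sketch goes through, with constructibility holding only after refining the stratification. For the Hodge part, $\mathcal P$ lifts to $i^!\Q^H_{\mathcal X}[n+2]$, which lies in $MHM(X_0)$ because $\rat$ is t-exact and faithful. Your image and cokernel constructions can therefore be carried out in $MHM(X_0)$, which is more than the paper's ``common origin'' statement. Without a hypothesis of this kind on $\mathcal X$, you would have to verify the socle condition above directly.
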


\subsection{Scope and organization}

The paper is centered on the ordinary double point case, where the local vanishing-cycle contribution is rank one and the gluing problem is most transparent. The multi-node and stratified sections extend this framework, but we do not attempt a full mixed-Hodge-module refinement at these higher levels of generality here.

Section~2 recalls the geometric setup of a conifold degeneration and the basic nearby/vanishing-cycle formalism. Section~3 treats the single-node case and compares the corrected perverse extension with the nearby-cycle formalism in the context of mixed Hodge modules. It also isolates the explicit gluing problem that would have to be solved in order to construct a full mixed-Hodge-module refinement of the corrected perverse object. Section~4 studies the corresponding structural extension in the multiple-node case, and Section~5 treats the general stratified setting. The final section discusses consequences and possible extensions of the construction.

\section{Limiting Mixed Hodge Structures}
\label{sec:LMHS}

Let
\[
\pi:\mathcal X \to \Delta
\]
be a projective morphism of complex algebraic varieties, smooth over
\(\Delta^*=\Delta\setminus\{0\}\), with smooth fiber \(X_t=\pi^{-1}(t)\) for
\(t\neq 0\) and singular central fiber \(X_0=\pi^{-1}(0)\).
For each \(k\), the cohomology groups \(H^k(X_t,\Q)\) form a polarized variation
of Hodge structure over \(\Delta^*\) \cite{Schmid}. After passing to the universal
cover of \(\Delta^*\), or equivalently after choosing a reference fiber and a branch
of the logarithm, one obtains the corresponding limiting mixed Hodge structure in the
sense of Schmid and Steenbrink \cite{Schmid,SteenbrinkLimits}.

\subsection{Monodromy and the limiting mixed Hodge structure}

Fix \(k\ge 0\). Parallel transport around a positively oriented simple loop about
\(t=0\) defines the monodromy operator
\[
T:H^k(X_t,\Q)\to H^k(X_t,\Q).
\]
By the monodromy theorem, \(T\) is quasi-unipotent \cite[Thm.~6.16]{Schmid}.
After a finite base change \(t\mapsto t^m\), one may assume that \(T\) is unipotent.
In that case one defines
\[
N:=\log T
   = (T-\id)-\frac{1}{2}(T-\id)^2+\frac{1}{3}(T-\id)^3-\cdots,
\]
which is nilpotent.

Associated with \(N\) is the monodromy weight filtration
\[
W(N)_\bullet
\]
on the limiting cohomology group \(H^k_{\lim}:=H^k(X_\infty,\Q)\), characterized by
the usual conditions
\[
N\,W(N)_\ell \subseteq W(N)_{\ell-2},
\qquad
N^j:\Gr^{W(N)}_{k+j}H^k_{\lim}\xrightarrow{\sim}\Gr^{W(N)}_{k-j}H^k_{\lim}
\quad (j\ge 0),
\]
where the filtration is centered at degree \(k\) \cite{Schmid,SteenbrinkLimits}.
Together with the limiting Hodge filtration \(F^\bullet_\infty\) obtained from the
nilpotent orbit theorem, this yields the limiting mixed Hodge structure
\[
\bigl(H^k_{\lim},W(N)_\bullet,F^\bullet_\infty\bigr)
\]
on \(H^k_{\lim}\) \cite{Schmid,SteenbrinkLimits}.

\subsection{Nearby cycles and vanishing cycles}

Let \(i:X_0\hookrightarrow \mathcal X\) denote the inclusion of the central fiber.
For \(K\in D^b_c(\mathcal X,\Q)\), the nearby-cycle and vanishing-cycle functors
\[
\psi_\pi K,\qquad \phi_\pi K
\]
fit into standard distinguished triangles in \(D^b_c(X_0,\Q)\); see, for example,
\cite[§4.2]{DimcaSheaves}. In particular, there is a functorial distinguished triangle
\begin{equation}
i^*K \longrightarrow \psi_\pi K \longrightarrow \phi_\pi K \overset{+1}{\longrightarrow}.
\label{eq:nearby-vanishing-triangle}
\end{equation}
Applying hypercohomology to \eqref{eq:nearby-vanishing-triangle} gives a long exact sequence
\begin{equation}
\cdots \to H^m(X_0,i^*K)\to \HH^m(X_0,\psi_\pi K)\to
\HH^m(X_0,\phi_\pi K)\to H^{m+1}(X_0,i^*K)\to\cdots .
\label{eq:nearby-vanishing-les}
\end{equation}

When \(K=\Q_{\mathcal X}\), the hypercohomology of the nearby-cycle complex identifies
with the cohomology of the canonical nearby fiber:
\[
\HH^m(X_0,\psi_\pi \Q_{\mathcal X}) \cong H^m(X_\infty,\Q),
\]
compatibly with monodromy and with the limiting mixed Hodge structure
\cite{SteenbrinkLimits,DimcaSheaves}. Thus the long exact sequence
\eqref{eq:nearby-vanishing-les} relates the cohomology of the central fiber, the limiting
cohomology, and the vanishing cohomology.

\subsection{The ordinary double point case}

We now specialize to a one-parameter degeneration of complex threefolds whose central fiber
\(X_0\) has a single ordinary double point \(p\). The Milnor fiber of an ordinary double
point in complex dimension \(3\) has the homotopy type of \(S^3\), hence its reduced
cohomology is one-dimensional in degree \(3\) and vanishes in all other degrees
\cite{MilnorSingularPoints,DimcaSheaves}. Equivalently, the local vanishing cohomology is
concentrated in the middle degree and has rank one.

Accordingly, the vanishing-cycle complex for the degeneration is supported at \(p\), and its
only nontrivial local contribution occurs in the middle degree. It follows from
\eqref{eq:nearby-vanishing-les} that the difference between the limiting cohomology and the
cohomology of the central fiber is governed by a single rank-one vanishing contribution.
More precisely, after the conventional shift placing nearby and vanishing cycles in the
perverse heart on \(X_0\), the corresponding vanishing-cycle perverse sheaf is a skyscraper
object of rank one at \(p\). This is the sheaf-theoretic manifestation of the Picard--Lefschetz
correction term.

For the purposes of the present paper, we will only use the following consequence: in the
ordinary double point case, the nearby/vanishing-cycle triangle carries exactly one local
vanishing degree of freedom, and this local rank-one contribution is the source of the
canonical perverse extension studied below.

\subsection{Mixed Hodge modules and realization}

The Hodge-theoretic refinement of nearby and vanishing cycles is provided by Saito's theory of
mixed Hodge modules \cite{SaitoMHM,SaitoDuality}. For every complex algebraic variety \(Y\),
Saito constructs an abelian category \(MHM(Y)\) together with an exact and faithful realization
functor
\[
\rat:MHM(Y)\to \Perv(Y;\Q)
\]
to rational perverse sheaves \cite{SaitoMHM}. Moreover, the nearby-cycle and vanishing-cycle
functors admit lifts to the mixed-Hodge-module setting and are compatible with the underlying
rational perverse sheaves under \(\rat\) \cite{SaitoMHM}.

In particular, if \(\pi:\mathcal X\to\Delta\) is as above, then the nearby-cycle mixed Hodge
module \(\psi_\pi^H(\Q_{\mathcal X})\) carries the limiting mixed Hodge structure on cohomology,
while its underlying rational perverse sheaf is the usual nearby-cycle perverse sheaf.
Likewise, the vanishing-cycle mixed Hodge module \(\phi_\pi^H(\Q_{\mathcal X})\) refines the
usual vanishing-cycle object. This formalism is the basic reason that one may compare the
canonical perverse extension on \(X_0\) with Hodge-theoretic degeneration data.

The following proposition records the only general fact from this formalism that we will use
in the next section.

\begin{proposition}
\label{prop:mhm-nearby-cycles}
Let \(\pi:\mathcal X\to\Delta\) be a one-parameter degeneration, and let
\(K\in D^bMHM(\mathcal X)\). Then the mixed-Hodge-module nearby-cycle and vanishing-cycle
functors fit into the corresponding functorial triangles in \(D^bMHM(X_0)\), and after applying
\(\rat\) one recovers the standard nearby/vanishing-cycle triangles in
\(D^b_c(X_0,\Q)\). In particular, applying hypercohomology to the nearby-cycle mixed Hodge
module recovers the limiting mixed Hodge structure on cohomology.
\end{proposition}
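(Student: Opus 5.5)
The plan is to deduce the proposition almost entirely from the structural results in Saito's theory \cite{SaitoMHM}, organized in three steps.

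\textbf{Step 1: nearby and vanishing cycles on mixed Hodge modules.} Since \(X_0=\pi^{-1}(0)\) is a principal divisor, cut out by \(t\circ\pi\), Saito defines exact functors
\[
\psi_\pi^H,\ \phi_{\pi,1}^H : MHM(\mathcal X)\to MHM(X_0).
\]
Their underlying rational perverse sheaves are \(\psi_\pi[-1]\) and \(\phi_\pi[-1]\) of \(\rat\) of the input. They come with morphisms
\[
\mathrm{can}:\psi_{\pi,1}^H\to\phi_{\pi,1}^H,\qquad \var:\phi_{\pi,1}^H\to\psi_{\pi,1}^H(-1),
\]
which are morphisms of mixed Hodge modules, and whose images under \(\rat\) are the topological can and var maps. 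By exactness these functors extend termwise to \(D^bMHM(\mathcal X)\).

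\textbf{Step 2: the triangle.} To get the triangle \(i^*K\to\psi_\pi K\to\phi_\pi K\xrightarrow{+1}\), I would use Saito's construction of \(i^*\) for a principal divisor. In \(D^bMHM(X_0)\) one has, up to the conventional shift,
\[
i^*K\simeq \Cone\bigl(\mathrm{can}:\psi_{\pi,1}^HK\to\phi_{\pi,1}^HK\bigr)[-1].
\]
This is exactly how \(i^*\) is built in \cite[\S2, \S4]{SaitoMHM} through the gluing description (Prop.~0.3 / Beilinson's gluing). Rotating this cone triangle gives a functorial distinguished triangle in \(D^bMHM(X_0)\).

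\textbf{Step 3: passing to constructible sheaves and cohomology.} Because \(\rat\) commutes with \(\psi,\phi\) and \(i^*\), and sends can to can, applying \(\rat\) yields the standard triangle \eqref{eq:nearby-vanishing-triangle}. The eigenvalue \(\neq 1\) part of \(\psi\) is added as a direct summand on both sides, since \(\phi=\phi_1\oplus\psi_{\neq1}\). This needs care with the shift conventions. For the final assertion, take \(K=\Q^H_{\mathcal X}\) and apply \(a_{X_0*}\), where \(a_{X_0}:X_0\to\mathrm{pt}\). By properness of \(\pi\), nearby cycles commute with proper pushforward, so
\[
H^m a_{X_0*}\psi_\pi^H\Q^H_{\mathcal X}=\psi^H_t\,(R^m\pi_*\Q^H)
\]
on \(\Delta\). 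On the disc this is the limit of the admissible variation \(R^m\pi_*\Q\), whose underlying mixed Hodge structure is Schmid's limiting MHS, with weight filtration the relative monodromy filtration \(W(N)\) by Saito's comparison.

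\textbf{Main obstacle.} The hardest step is Step 3's identification of Saito's Hodge and weight filtrations on \(\psi_t\) of the variation with \(F_\infty\) and \(W(N)\) of Schmid--Steenbrink \cite{Schmid,SteenbrinkLimits}, including the centering at \(k\). I would handle it by citing Saito's theorem that \(\psi_t^H\) of a polarizable VHS on \(\Delta^*\) equals the limiting MHS, rather than reproving it. Exactness of \(a_*\) in degree \(m\) is supplied by the decomposition theorem and the stability of the hypercohomology spectral sequence.
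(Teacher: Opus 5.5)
Your proposal is correct and follows the paper's route. The paper's proof simply cites Saito for the lifted functors $\psi^H,\phi^H$ and their compatibility with $\rat$, and Steenbrink--Saito for the limiting MHS; your cone-of-$\mathrm{can}$ description of $i^*$ and the base change $a_{X_0*}\psi^H_\pi\cong\psi^H_t\,\pi_*$ just unpack those citations.

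Two small corrections. First, with the perverse normalization you fix in Step 1 one has $i^*K\simeq\Cone(\mathrm{can})$ with no $[-1]$; you can check this at a point where $\pi$ is a submersion, so that $\phi=0$. Second, the decomposition theorem is not needed in Step 3: $\psi^H_t$ is exact, so it commutes with $H^m$, and it depends only on the restriction to $\Delta^*$, where $\pi$ is smooth and projective.
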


\begin{proof}
The existence of nearby and vanishing cycle functors in the category of mixed Hodge modules,
together with their compatibility with the underlying rational perverse sheaves, is part of
Saito's formalism; see \cite{SaitoMHM,SaitoDuality}. The identification of the hypercohomology
of nearby cycles with limiting cohomology, endowed with its limiting mixed Hodge structure, is
standard in the work of Steenbrink and Saito; see \cite{SteenbrinkLimits,SaitoMHM}.
\end{proof}

\medskip

Proposition~\ref{prop:mhm-nearby-cycles} does \emph{not} by itself identify a specific extension
class in the category of mixed Hodge structures with a specific extension class in the category
of perverse sheaves. Rather, it shows that both the perverse-sheaf-theoretic and the
Hodge-theoretic constructions are functorially derived from the same nearby-cycle formalism.
The comparison with the canonical perverse extension attached to the ordinary double point
degeneration will be carried out in the next section.
\section{The single-node case}
\label{sec:ODP}

We specialize to a one-parameter degeneration
\[
\pi:\mathcal X\to\Delta
\]
whose general fiber \(X_t\) is a smooth complex threefold and whose central fiber
\(X_0\) has a single ordinary double point \(p\in X_0\). Let
\[
U:=X_0\setminus\{p\},
\]
and write
\[
j:U\hookrightarrow X_0,
\qquad
i:\{p\}\hookrightarrow X_0
\]
for the inclusions.

\subsection{Vanishing cycles and Picard--Lefschetz}
\label{subsec:ODP-PL}

For an ordinary double point in complex dimension \(3\), the Milnor fiber has the
homotopy type of \(S^3\). In particular, its reduced cohomology is one-dimensional
in degree \(3\) and vanishes in all other degrees \cite{MilnorSingularPoints,DimcaSheaves}.
Equivalently, the local vanishing cohomology is concentrated in the middle degree and
has rank one.

Let \(\delta\in H_3(X_t,\Z)\) denote a vanishing cycle. The local monodromy
transformation about \(t=0\) acts on middle homology by the Picard--Lefschetz formula.
In the present case this action is rank one and is determined by the vanishing sphere
\(\delta\); see \cite[Ch.~11]{MilnorSingularPoints} and \cite[§4.1]{DimcaSheaves}.
For the arguments below, we will only use the consequence that the vanishing-cycle
contribution is one-dimensional and supported at the singular point.

\subsection{The perverse extension}
\label{subsec:ODP-pervext}

Let
\[
F:=\Q_{\mathcal X}[3].
\]
Since \(\dim_\C X_t=3\), the shifted complex \(F\) is the natural object from which
to form nearby and vanishing cycles in the perverse normalization. Consider the
variation morphism
\[
\var_F:\phi_\pi(F)\to \psi_\pi(F),
\]
and define
\[
\mathcal P:=\Cone\!\bigl(\var_F:\phi_\pi(F)\to\psi_\pi(F)\bigr)[-1].
\]
In the ordinary double point case, the vanishing-cycle perverse sheaf is supported at \(p\) and has one-dimensional stalk there. Consequently, the cone construction produces an extension of the intersection complex by a point-supported rank-one contribution. More precisely, from \cite{RahmanSchoberPaper}, one has a short exact
sequence in \(\Perv(X_0;\Q)\)
\begin{equation}
0\longrightarrow IC_{X_0}
\longrightarrow \mathcal P
\longrightarrow i_*\Q_{\{p\}}
\longrightarrow 0,
\label{eq:perverse-extension-ODP}
\end{equation}
where
\[
IC_{X_0}:=j_{!*}\Q_U[3].
\]
For the formalism of nearby and vanishing cycles and for the linear-algebraic
description of perverse sheaves with isolated singularities, see
\cite{BBD,MacPhersonVilonen1986,GMV1996,DimcaSheaves}.

\subsection{Mixed Hodge modules and nearby cycles}
\label{subsec:ODP-MHM}

We now place the preceding construction in Saito's framework of mixed Hodge modules.
For a complex algebraic variety \(Y\), Saito constructs an abelian category \(MHM(Y)\)
together with an exact and faithful realization functor
\[
\rat:MHM(Y)\to \Perv(Y;\Q)
\]
to rational perverse sheaves \cite{SaitoMHM}. Moreover, nearby-cycle and vanishing-cycle
functors admit lifts to the mixed-Hodge-module setting and are compatible, under
\(\rat\), with the corresponding functors on perverse sheaves \cite{SaitoMHM,SaitoDuality}. Applied to the degeneration \(\pi:\mathcal X\to\Delta\), this gives mixed Hodge modules
\[
\psi_\pi^H(\Q_{\mathcal X}[3]),
\qquad
\phi_\pi^H(\Q_{\mathcal X}[3])
\]
on \(X_0\), together with the corresponding morphisms in \(MHM(X_0)\). Their images
under \(\rat\) are the usual nearby- and vanishing-cycle perverse sheaves associated
with \(F:=\Q_{\mathcal X}[3]\). The next proposition records the precise compatibility needed.

\begin{proposition}
\label{prop:ODP-compatible}
Let \(\pi:\mathcal X\to\Delta\) be a one-parameter degeneration whose central fiber \(X_0\) has a single ordinary double point. Then the canonical perverse sheaf \(\mathcal P\) defined in \eqref{eq:perverse-extension-ODP} is functorially related to the nearby- and vanishing-cycle formalism in Saito's category of mixed Hodge modules through the realization functor
\[
\rat:MHM(X_0)\to\Perv(X_0;\Q).
\]
In particular, the point-supported rank-one contribution in \eqref{eq:perverse-extension-ODP} and the rank-one vanishing contribution in the limiting mixed Hodge structure both arise from the same nearby-cycle/vanishing-cycle construction.
\end{proposition}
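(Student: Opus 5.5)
The plan is to lift the defining cone of \(\mathcal P\) one step at a time to \(MHM(X_0)\), and then read off both rank-one contributions from the same lifted object \(\phi_\pi^H\).

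\emph{Step 1: lift the variation morphism.} By Proposition~\ref{prop:mhm-nearby-cycles}, the nearby- and vanishing-cycle functors lift to \(\psi_\pi^H\) and \(\phi_\pi^H\) on \(D^bMHM(\mathcal X)\). The morphisms \(\mathrm{can}\) and \(\var\) between them are part of Saito's formalism; with a Tate twist, \(\var\) is a morphism \(\phi_{\pi,1}^H\to\psi_{\pi,1}^H(-1)\) on unipotent parts. Applying the exact faithful functor \(\rat\) gives back \(\var_F\). I would take \(\var^H\) from the divisor-case gluing data \cite[Prop.~0.3]{SaitoMHM}, where it appears together with \(N=\var\circ\mathrm{can}\).

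\emph{Step 2: identify the pieces.} In the ODP case, \(\phi_\pi^H(\Q^H_{\mathcal X}[3])\) is supported at \(p\). By \S\ref{subsec:ODP-PL}, its underlying perverse sheaf has one-dimensional stalk, so it is \(i_*\) of a rank-one Hodge structure, of Tate type \(\Q(-2)\) up to the twist convention. Its global hypercohomology is exactly the vanishing term in the long exact sequence \eqref{eq:nearby-vanishing-les} with Hodge structures. Via the identification \(\HH^*(X_0,\psi_\pi^H)\cong H^*_{\lim}\) in Proposition~\ref{prop:mhm-nearby-cycles}, this is the rank-one vanishing contribution to the limiting mixed Hodge structure. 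Equivalently, it is the image of \(N\), which is spanned by \(\delta\) by Picard--Lefschetz.

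\emph{Step 3: compare with \eqref{eq:perverse-extension-ODP}.} The quotient \(i_*\Q_{\{p\}}\) in \eqref{eq:perverse-extension-ODP} comes, by \cite{RahmanSchoberPaper}, from the long exact perverse-cohomology sequence of the triangle \(\mathcal P\to\phi_\pi F\xrightarrow{\var_F}\psi_\pi F\). It is therefore the kernel or cokernel piece of \(\var_F\) landing on \(\phi_\pi F=i_*\Q_{\{p\}}\). That piece is \(\rat\) of the corresponding kernel or cokernel of \(\var^H\) in the abelian category \(MHM(X_0)\), because \(\rat\) is exact. Similarly, \(IC_{X_0}\) is \(\rat\) of the image of \(\psi^H\) modulo the point-supported part, i.e.\ the weight-graded piece of \(\psi^H_{\pi,1}\). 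So the quotient of \(\mathcal P\) and the LMHS vanishing class are both \(\rat\) and hypercohomology of the same object \(\phi_\pi^H\), connected by \(\var^H\). This is the asserted functorial common origin.

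\emph{Main obstacle.} The hard point is that \(\mathcal P\) itself is the cone of a morphism in a derived category. A cone in \(D^bMHM\) is only well-defined up to non-unique isomorphism, and is perverse only when \(\var^H\) is injective or surjective in the appropriate sense. I would not claim a canonical MHM lift of \(\mathcal P\). Instead I would verify the statement at the level of its constituents, which are canonical in the abelian categories: \(\ker\) and \(\mathrm{coker}\) of \(\var^H\) and the images under \(\rat\). I would rely on exactness and faithfulness of \(\rat\) to transport the short exact sequence. This is consistent with the weaker "functorially related" formulation of the proposition.
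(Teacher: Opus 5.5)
Your Steps~1--2 already contain everything the paper's own proof uses. The paper combines Saito's compatibility of $\psi^H_\pi,\phi^H_\pi$ with $\rat$ and the rank-one Milnor fibre, and concludes that both contributions come from the same local vanishing-cycle datum. It never tries to locate the quotient $i_*\Q_{\{p\}}$ in terms of $\var_F$. The gap is in Step~3, where you do try, and the identification fails.

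Run the long exact sequence of perverse cohomology for $\mathcal P\to\phi_\pi F\xrightarrow{\var_F}\psi_\pi F\xrightarrow{+1}$. It gives ${}^pH^0(\mathcal P)\cong\ker\var_F\subset\phi_\pi F$ and ${}^pH^1(\mathcal P)\cong\operatorname{coker}\var_F$.
\begin{itemize}
\item The kernel is a \emph{sub}object supported at $p$. So a perverse $\mathcal P$ would equal $\ker\var_F$ and could not restrict to $\Q_U[3]$.
\item The cokernel is a quotient of $\psi_\pi F$, not a piece ``landing on $\phi_\pi F$''.
\end{itemize}
So neither $\ker\var^H$ nor $\operatorname{coker}\var^H$ realizes the quotient $i_*\Q_{\{p\}}$, and your conclusion that the two rank-one pieces are ``connected by $\var^H$'' is not established.

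The missing input is the actual shape of $\var_F$.
\begin{itemize}
\item Since $i^*F=\Q_{X_0}[3]$ is perverse ($X_0$ is locally a hypersurface), the triangle \eqref{eq:nearby-vanishing-triangle} is a short exact sequence. Hence $\mathrm{can}_F$ is an epimorphism with kernel $\Q_{X_0}[3]$.
\item Verdier duality exchanges $\mathrm{can}$ and $\var$ on the smooth total space, so $\var_F$ is a monomorphism.
\item Therefore $\ker\var_F=0$ and $\Cone(\var_F)\cong\operatorname{coker}\var_F$ is perverse. By contrast, $\Cone(\var_F)[-1]$ is not perverse; indeed $j^*\Cone(\var_F)[-1]\cong\Q_U[2]$. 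So in your triangle it is $\mathcal P[1]$, not $\mathcal P$, that fits \eqref{eq:perverse-extension-ODP}.
\item The four-variable Milnor monodromy on $\tilde H^3(S^3)$ is trivial, so $\mathrm{can}\circ\var=T-\id=0$ on $\phi_\pi F$.
\item Hence $\mathrm{can}$ descends to a surjection $\operatorname{coker}\var_F\to\phi_\pi F=i_*\Q_{\{p\}}$. Its kernel is $\ker\mathrm{can}/\operatorname{im}\var=\ker N/\operatorname{im}N\cong\Q_{X_0}[3]/i_*\Q_{\{p\}}\cong IC_{X_0}$. Here the link $S^2\times S^3$ gives $\Q_{X_0}[3]$ exactly one skyscraper subobject.
\end{itemize}

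So the quotient map is induced by $\mathrm{can}$, not by $\var$. Since $\mathrm{can}^H$ and $\var^H$ are morphisms in $MHM(X_0)$ and $\rat$ is exact, $\operatorname{coker}\var^H$ (up to Tate twist) is an honest mixed Hodge module realizing the extension, with quotient $\phi^H_\pi$ itself. This gives the common-origin statement in a stronger form than the paper's. It also removes your ``main obstacle'': $\var^H$ is injective, so its cone is canonically its cokernel.
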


\begin{proof}
By Saito's theory, nearby-cycle and vanishing-cycle functors are defined for mixed Hodge modules and are compatible with the corresponding functors on rational perverse sheaves after applying \(\rat\) \cite{SaitoMHM,SaitoDuality}. Therefore the mixed-Hodge-module
nearby and vanishing cycle objects attached to \(\Q_{\mathcal X}[3]\) determine, after applying \(\rat\), the usual nearby and vanishing cycle perverse sheaves on \(X_0\).

In the ordinary double point case, the Milnor fiber has reduced cohomology of rank one in degree \(3\) and trivial reduced cohomology in all other degrees \cite{MilnorSingularPoints,DimcaSheaves}. Hence the local vanishing-cycle contribution is rank one. On the perverse-sheaf side this yields the quotient \(i_*\Q_{\{p\}}\) in \eqref{eq:perverse-extension-ODP}, while on the Hodge-theoretic side the same local vanishing-cycle data contributes the rank-one vanishing part of the limiting mixed Hodge structure. Thus both constructions are functorially derived from the same nearby-cycle
formalism.
\end{proof}

\subsection{Remarks on the Hodge-theoretic comparison}
\label{subsec:ODP-remarks}

Proposition~\ref{prop:ODP-compatible} does not assert a canonical identification between
an extension class in \(\Ext^1_{\mathrm{MHS}}\) and the extension class of
\eqref{eq:perverse-extension-ODP} in \(\Ext^1_{\Perv(X_0;\Q)}\). Such a statement would
require an additional comparison theorem making precise how the relevant extension data
behaves under realization and hypercohomology. What Proposition~\ref{prop:ODP-compatible}
does establish is the common origin of the two constructions: both the canonical perverse
extension and the Hodge-theoretic degeneration data are produced from nearby and vanishing
cycles in Saito's formalism.

This common origin is the sense in which the mixed-Hodge-module picture refines the
perverse-sheaf construction. In particular, the extension
\eqref{eq:perverse-extension-ODP} should be viewed as the underlying rational perverse
shadow of a mixed-Hodge-module construction attached to the degeneration, rather than as
an independent object unrelated to the limiting mixed Hodge structure.

\begin{remark}
A complete Hodge-theoretic refinement of \eqref{eq:perverse-extension-ODP} would consist of an
object \(\mathcal P^H\in MHM(X_0)\) fitting into an exact sequence
\[
0\to IC^H_{X_0}\to \mathcal P^H\to i_*\Q^H_{\{p\}}(-1)\to 0
\]
whose image under
\[
\rat:MHM(X_0)\to\Perv(X_0;\Q)
\]
is the perverse extension \eqref{eq:perverse-extension-ODP}. By Saito's divisor-case gluing formalism for a principal divisor \(X_0:=\pi^{-1}(0)\), the construction of such an object reduces to the explicit identification of the corresponding gluing datum \((\mathcal M',\mathcal M'',u,v)\)
with \(vu=N\) \cite[Prop.~0.3]{SaitoMHM}. The present paper establishes the common nearby-cycle origin of the perverse and Hodge-theoretic constructions, but does not yet carry out that explicit gluing calculation.
\end{remark}

\subsection{Mixed-Hodge-module refinement and the gluing problem}
\label{subsec:ODP-gluing}

Proposition~\ref{prop:ODP-compatible} shows that the canonical perverse extension \eqref{eq:perverse-extension-ODP} and the rank-one vanishing contribution in the limiting mixed Hodge structure arise from the same nearby-cycle formalism. A stronger statement, however, would require the explicit construction of a mixed-Hodge-module object on \(X_0\) whose underlying rational perverse sheaf is precisely the perverse extension
\eqref{eq:perverse-extension-ODP}. More concretely, one would like to construct an object
\[
\mathcal P^H \in MHM(X_0)
\]
fitting into an exact sequence
\begin{equation}
0 \longrightarrow IC^H_{X_0}
\longrightarrow \mathcal P^H
\longrightarrow i_*\Q^H_{\{p\}}(-1)
\longrightarrow 0,
\label{eq:MHM-ODP-extension}
\end{equation}
such that
\[
\rat(\mathcal P^H)\cong \mathcal P
\]
and such that \eqref{eq:MHM-ODP-extension} refines the perverse extension
\eqref{eq:perverse-extension-ODP}. Here \(IC^H_{X_0}\) denotes the Hodge-module intersection complex on \(X_0\), and \(i_*\Q^H_{\{p\}}(-1)\) denotes the point-supported mixed Hodge module corresponding to the rank-one vanishing contribution.

Saito's divisor-case gluing formalism provides the natural framework for such a construction. If \(Y=g^{-1}(0)\) is a principal divisor in a complex algebraic variety \(X\), then mixed Hodge modules along \(Y\) may be described in terms of gluing data consisting of an object on \(X\setminus Y\), an object on \(Y\), and morphisms \(u,v\) satisfying the relation
\[
vu=N,
\]
where \(N\) is the nilpotent monodromy operator \cite[Prop.~0.3]{SaitoMHM}. Since the central fiber \(X_0=\pi^{-1}(0)\) is a principal divisor, the problem of constructing \(\mathcal P^H\) reduces to identifying the gluing datum in Saito's formalism whose realization under
\[
\rat:MHM(X_0)\to\Perv(X_0;\Q)
\]
recovers the variation morphism
\[
\var_F:\phi_\pi(\Q_{\mathcal X}[3])\to \psi_\pi(\Q_{\mathcal X}[3])
\]
and hence the cone object
\[
\mathcal P:=\Cone(\var_F)[-1].
\]
Proposition~\ref{prop:ODP-compatible} establishes the common origin of the perverse and Hodge-theoretic constructions, but not yet the full existence-and-uniqueness statement for a canonical object \(\mathcal P^H\) in \(MHM(X_0)\). Establishing such a statement would require an explicit analysis of the corresponding divisor gluing data and a
verification that the resulting object realizes the exact sequence
\eqref{eq:perverse-extension-ODP}.
\begin{remark}
The construction of the mixed-Hodge-module extension \eqref{eq:MHM-ODP-extension} is a natural next step. In the ordinary double point case, the local vanishing-cycle contribution is rank one, so one expects the required gluing data to be especially simple. A complete treatment would identify the corresponding objects and morphisms in Saito's divisor formalism, prove that their realization is the canonical perverse extension \eqref{eq:perverse-extension-ODP}, and then analyze the induced weight and Hodge filtrations on the resulting mixed Hodge module. This would give a fully internal Hodge-theoretic refinement of the single-node perverse extension.
\end{remark}
\section{Multiple-node degenerations}
\label{sec:multinode}

We now consider a one-parameter degeneration
\[
\pi:\mathcal X\to\Delta
\]
whose central fiber \(X_0\) contains a finite set of ordinary double points
\[
\Sigma=\{p_1,\dots,p_r\}\subset X_0 .
\]
Let
\[
U:=X_0\setminus \Sigma
\]
be the smooth locus of \(X_0\), and write
\[
j:U\hookrightarrow X_0,
\qquad
i_k:\{p_k\}\hookrightarrow X_0
\]
for the natural inclusions.

\subsection{Local vanishing-cycle contributions}
\label{subsec:multinode-van}

For an isolated hypersurface singularity, the vanishing-cycle complex is supported on the singular
locus, and its stalk cohomology is canonically identified with the reduced cohomology of the Milnor
fiber \cite{MilnorSingularPoints,DimcaSheaves}. In the ordinary double point case, the Milnor fiber
at each \(p_k\) has the homotopy type of a sphere in the middle real dimension; in particular, for a
degeneration of complex threefolds, the reduced cohomology is one-dimensional in degree \(3\) and
vanishes in all other degrees \cite{MilnorSingularPoints,DimcaSheaves}. Thus each node contributes
a rank-one local vanishing-cycle summand.

Accordingly, after the conventional shift placing nearby and vanishing cycles in the perverse heart,
the vanishing-cycle perverse sheaf is point-supported on \(\Sigma\), and its local contribution at
each \(p_k\) is one-dimensional. Since perverse sheaves supported on a finite set are equivalent to
finite-dimensional graded data concentrated in degree \(0\), it follows that the point-supported
vanishing contribution is of the form
\[
\bigoplus_{k=1}^r i_{k*}\Q_{\{p_k\}} .
\]
This is the multi-node analogue of the rank-one skyscraper contribution in the single-node case.

\subsection{The canonical perverse extension}
\label{subsec:multinode-perv}

Let
\[
F:=\Q_{\mathcal X}[3]
\]
and consider the variation morphism
\[
\var_F:\phi_\pi(F)\to \psi_\pi(F).
\]
Define
\[
\mathcal P:=\Cone\!\bigl(\var_F:\phi_\pi(F)\to\psi_\pi(F)\bigr)[-1].
\]
In \cite{RahmanSchoberPaper}, this construction was carried out in detail for the single-node case, where it was shown that the resulting object is perverse, agrees with \(\Q_U[3]\) on the smooth locus, and fits into a canonical short exact sequence
\[
0\to IC_{X_0}\to \mathcal P\to i_*\Q_{\{p\}}\to 0
\]
for \(X_0=U\sqcup\{p\}\). In that same paper, the extension of the construction to several nodes was indicated only briefly. The following proposition gives the exact perverse-sheaf statement one may deduce in the multi-node setting once the perverse property of \(\mathcal P\) is established.
\begin{proposition}
\label{prop:multinode-perverse-extension}
Assume that the cone object
\[
\mathcal P:=\Cone(\var_F)[-1]
\]
belongs to \(\Perv(X_0;\Q)\), and that its restriction to the smooth locus satisfies
\[
j^*\mathcal P \cong \Q_U[3].
\]
Then there is a short exact sequence in \(\Perv(X_0;\Q)\)
\begin{equation}
0\longrightarrow IC_{X_0}
\longrightarrow \mathcal P
\longrightarrow \bigoplus_{k=1}^r i_{k*}\Q_{\{p_k\}}
\longrightarrow 0.
\label{eq:multinode-perv-exact}
\end{equation}
In particular, \(\mathcal P\) differs from the intersection complex by a finite direct sum of
point-supported rank-one contributions, one for each node.
\end{proposition}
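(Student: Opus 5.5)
The plan is to run the standard recollement argument for the closed finite set \(\Sigma\) with open complement \(U\), using only the two hypotheses: \(\mathcal P\in\Perv(X_0;\Q)\) and \(j^*\mathcal P\cong\Q_U[3]\).

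\textbf{Step 1: the subobject \(IC_{X_0}\).} Since \(j^*\) is t-exact, adjunction gives a morphism of perverse sheaves
\[
\mathcal P\to {}^p\!j_*j^*\mathcal P\cong {}^p\!j_*\Q_U[3].
\]
Recall that \(IC_{X_0}=j_{!*}\Q_U[3]\) is the image of \({}^p\!j_!\Q_U[3]\to{}^p\!j_*\Q_U[3]\), and that this image is the smallest subobject of \({}^p\!j_*\Q_U[3]\) restricting to \(\Q_U[3]\) on \(U\). I will therefore compare \(\mathcal P\) with \(IC_{X_0}\) through the canonical map \({}^p\!j_!\Q_U[3]\to\mathcal P\), adjoint to the identification \(j^*\mathcal P\cong\Q_U[3]\).

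The key structural input is that \(\mathcal P\) has no nonzero quotient supported on \(\Sigma\). I would try to read this off from the triangle
\[
\mathcal P\to\phi_\pi(F)\xrightarrow{\var_F}\psi_\pi(F).
\]
Here \(\psi_\pi(F)\) has no subobjects supported on \(\Sigma\) because \(\var\) is injective locally at a node. Then \(\mathcal P\) is a kernel-type object, and its \(i_k^!\) stalks vanish in negative perverse degree. Granting this, \(\mathcal P\) has no point-supported subobjects, so \(\mathcal P\to{}^p\!j_*\Q_U[3]\) is injective. The image of \({}^p\!j_!\Q_U[3]\) inside \(\mathcal P\) is then identified with \(IC_{X_0}\), which gives the inclusion \(IC_{X_0}\hookrightarrow\mathcal P\).

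\textbf{Step 2: the cokernel.} The cokernel \(\mathcal Q:=\mathcal P/IC_{X_0}\) satisfies \(j^*\mathcal Q=0\), since both \(\mathcal P\) and \(IC_{X_0}\) restrict to \(\Q_U[3]\). So \(\mathcal Q\) is a perverse sheaf supported on the finite set \(\Sigma\). Such an object decomposes canonically as \(\bigoplus_k i_{k*}V_k\), with each \(V_k\) a finite-dimensional vector space in degree \(0\).

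\textbf{Step 3: computing \(\dim V_k\).} The nodes are isolated, so the computation localizes to a small ball around each \(p_k\), where we are back in the single-node situation. By the single-node result of \cite{RahmanSchoberPaper}, recorded as \eqref{eq:perverse-extension-ODP}, each \(V_k\) is one-dimensional.

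If one prefers not to cite that result, the same conclusion follows from a stalk count. By \S\ref{subsec:multinode-van}, the stalk of \(\phi_\pi(F)\) at \(p_k\) is the reduced Milnor cohomology, which is rank one. The stalk of \(IC_{X_0}\) at the node is the truncated link cohomology, while that of \(\psi_\pi(F)\) is the Milnor fiber cohomology. Comparing the long exact sequence of stalks of the triangle with that of \(0\to IC\to\mathcal P\to\mathcal Q\to0\) gives \(\dim V_k=1\). This yields \eqref{eq:multinode-perv-exact}.

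\textbf{Main obstacle.} I expect the hardest step to be Step 1, showing that \(IC_{X_0}\) really is a subobject rather than merely a subquotient. This is the statement that \(\mathcal P\) has no point-supported subobject. My first attempt will be to localize at each \(p_k\) and import the single-node computation. If that is unavailable, I would check directly that \(H^{-1}(i_k^!\mathcal P)\) vanishes, using that \(\var\) is an isomorphism on the vanishing summand at an ordinary double point. Here \(N\) has rank one, and the Picard–Lefschetz formula gives \(\mathrm{can}\circ\var=T-\id\neq0\).
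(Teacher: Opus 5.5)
\textbf{Verdict: there is a genuine gap.} Your reduction is sound: if $\mathcal P$ is perverse, restricts to $\Q_U[3]$, and has no nonzero subobject supported on $\Sigma$, then the image of ${}^p\!j_!\Q_U[3]\to\mathcal P$ is $IC_{X_0}$ and the cokernel is a sum of skyscrapers. (You need no point-supported \emph{subobject}; "no quotient supported on $\Sigma$'' would instead make ${}^p\!j_!\Q_U[3]\to\mathcal P$ surjective.) You are also right that this step is not formal from the two hypotheses. The paper's proof just invokes the universal property of $j_{!*}$, but ${}^p\!j_!\Q_U[3]$ is itself a perverse extension of $\Q_U[3]$ that contains $IC_{X_0}$ only as a quotient. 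The gap is your justification of "no point-supported subobject'', which fails at each step.

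\begin{enumerate}[label=(\roman*)]
\item "$\psi_\pi F$ has no subobject supported on $\Sigma$ because $\var$ is injective'' contradicts itself. Injectivity of $\var_F$ exhibits $\phi_\pi F\cong\bigoplus_k i_{k*}\Q$ as exactly such a subobject. Indeed, at a threefold node $\mathrm{Hom}(i_{k*}\Q,\psi_\pi F)=H^0(i_k^!\psi_\pi F)\cong H^2(S^2\times S^3;\Q)\cong\Q$.
\item The triangle cannot be used as you intend. In the normalization where $\phi_\pi F$ and $\psi_\pi F$ are perverse, suppose $\mathcal P$ is perverse. Then $\mathcal P\to\phi_\pi F\xrightarrow{\var_F}\psi_\pi F\xrightarrow{+1}$ is a short exact sequence, so $\mathcal P=\ker(\var_F)\subset\phi_\pi F$ is supported on $\Sigma$. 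That is the opposite of what you want. Restricting the triangle to $U$ gives $j^*\mathcal P\cong j^*\psi_\pi F[-1]$, so the two hypotheses cannot both hold for $\Cone(\var_F)[-1]$ in that normalization.
\item The degree is off. $H^{-1}(i_k^!\mathcal P)=0$ holds automatically for perverse $\mathcal P$; what you need is $H^0(i_k^!\mathcal P)=\mathrm{Hom}(i_{k*}\Q,\mathcal P)=0$.
\item The Picard--Lefschetz fallback is false. On $\phi_\pi F$, $\mathrm{can}\circ\var=T-\id$ is the local monodromy on $\tilde H^3(S^3)$. For a node in four variables this is trivial: the antipodal map of $S^3$ has degree $+1$, equivalently $\delta\cdot\delta=0$. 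The rank-one $N$ is $\var\circ\mathrm{can}$ acting on $\mathbb H^0(X_0,\psi_\pi F)=H^3(X_t)$. Consequently $\var_F$ induces zero on stalks at each $p_k$.
\end{enumerate}

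Step 3 has the same defect. The two hypotheses alone do not force $\dim V_k=1$; for instance $IC_{X_0}\oplus i_{k*}\Q^{m}$ satisfies both. So the count must use the specific object. Localizing to the single-node case only imports the same unproved claim. The stalk count also cannot be run for $\Cone(\var_F)[-1]$: since $\var_F$ vanishes on stalks, the stalk at $p_k$ has cohomology in degrees $-2,0,1$, which violates perversity.

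What does work is your Steps 1--2 applied to $\Cone(\var_F)\cong\operatorname{coker}(\var_F)$, with no shift. This object is perverse and restricts to $\Q_U[3]$.
\begin{itemize}
\item $\var_F$ is injective, since $\ker\var_F={}^pH^0(i^!\Q_{\mathcal X}[4])=0$ for smooth $\mathcal X$.
\item Hence $\mathrm{Hom}(i_{k*}\Q,\phi_\pi F)\to\mathrm{Hom}(i_{k*}\Q,\psi_\pi F)$ is an injection of one-dimensional spaces, so it is bijective.
\item Because $\Ext^1(i_{k*}\Q,i_{k*}\Q)=0$, the cokernel then has no point-supported subobject.
\item Stalk Euler characteristics give $\dim V_k=1$: the cokernel has $\chi=0-1=-1$, and $IC_{X_0}$ has $\chi=-2$ (stalk $\Q$ in degrees $-3,-1$).
\end{itemize}
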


\begin{proof}
By assumption, \(\mathcal P\) is a perverse extension of \(\Q_U[3]\). For the open--closed
decomposition
\[
X_0 = U \sqcup \Sigma,
\]
the recollement formalism for perverse sheaves provides canonical exact sequences in the heart; see
\cite{BBD,MacPhersonVilonen1986,GMV1996}. Since
\[
j^*IC_{X_0}\cong \Q_U[3]\cong j^*\mathcal P,
\]
the universal property of the intermediate extension yields a monomorphism
\[
IC_{X_0}\hookrightarrow \mathcal P.
\]
Its cokernel is supported on \(\Sigma\). A perverse sheaf supported on the finite set \(\Sigma\) is a
direct sum of point-supported perverse sheaves
\[
\bigoplus_{k=1}^r i_{k*}V_k
\]
for finite-dimensional \(\Q\)-vector spaces \(V_k\). Since the local vanishing-cycle contribution at
each ordinary double point is one-dimensional by the Milnor-fiber calculation recalled above, each
\(V_k\) is one-dimensional. Hence \(V_k\cong \Q\) for every \(k\), and the quotient is canonically
isomorphic to
\[
\bigoplus_{k=1}^r i_{k*}\Q_{\{p_k\}}.
\]
This gives \eqref{eq:multinode-perv-exact}.
\end{proof}

\begin{remark}
The content of Proposition~\ref{prop:multinode-perverse-extension} is structural rather than
classification-theoretic. It identifies the possible shape of the corrected perverse object in the
multi-node case, but it does not by itself classify the corresponding extension class in
\[
\Ext^1_{\Perv(X_0;\Q)}
\Bigl(\bigoplus_{k=1}^r i_{k*}\Q_{\{p_k\}},\,IC_{X_0}\Bigr).
\]
That extension class contains the global gluing information relating the local node contributions.
\end{remark}

\subsection{Interaction of the node contributions}
\label{subsec:multinode-interaction}

Although the quotient term in \eqref{eq:multinode-perv-exact} is a direct sum of point-supported
rank-one objects, the extension need not split as a direct sum of single-node extensions. The
possible interactions are encoded in the extension class
\[
[\mathcal P]\in
\Ext^1_{\Perv(X_0;\Q)}
\Bigl(\bigoplus_{k=1}^r i_{k*}\Q_{\{p_k\}},\,IC_{X_0}\Bigr).
\]
Geometrically, one expects this global extension data to reflect the relations among the vanishing
cycles in the nearby fiber and the resulting Picard--Lefschetz monodromy representation. In \cite{RahmanSchoberPaper}, this point was stated at the level of indication rather than full proof.  At the level of local topology, the vanishing cycles
\[
\delta_1,\dots,\delta_r
\]
define rank-one local contributions, one at each node. What the extension class records is the way these local contributions are assembled into a single perverse object on \(X_0\). In particular, \eqref{eq:multinode-perv-exact} should be viewed as a global gluing statement rather than merely a
direct sum of independent local corrections.

\subsection{Hodge-theoretic refinement}
\label{subsec:multinode-hodge}

We now discuss the Hodge-theoretic counterpart of the preceding construction. By Saito's theory,
nearby and vanishing cycles lift to the category of mixed Hodge modules and are compatible with the
underlying rational perverse sheaves under the realization functor
\[
\rat:MHM(X_0)\to \Perv(X_0;\Q)
\]
\cite{SaitoMHM,SaitoDuality}. Thus the nearby-cycle object carries the limiting mixed Hodge
structure on cohomology, while its underlying rational perverse sheaf is the usual nearby-cycle
perverse sheaf.

In the present multi-node situation, each ordinary double point still contributes a rank-one local
vanishing piece. Accordingly, the vanishing part of the limiting mixed Hodge structure in middle
degree has rank \(r\). What is functorially justified at this stage is therefore the following.

\begin{proposition}
\label{prop:multinode-hodge}
For a one-parameter degeneration whose central fiber has ordinary double points \(\Sigma=\{p_1,\dots,p_r\}\), the local vanishing contribution to the limiting mixed Hodge structure is the direct sum of \(r\) rank-one contributions, one from each node. Moreover, this Hodge-theoretic vanishing contribution and the quotient term in \eqref{eq:multinode-perv-exact} arise from the same nearby-cycle/vanishing-cycle formalism in Saito's category of mixed Hodge modules.
\end{proposition}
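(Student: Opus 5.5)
The argument follows the single-node case (Proposition~\ref{prop:ODP-compatible}), made local around each node. Since \(\Sigma\) is finite and the vanishing-cycle functor is supported on the singular locus of \(\pi\) along \(X_0\), I would first decompose the vanishing-cycle object. Choose pairwise disjoint small balls \(B_k\) around the nodes \(p_k\). Because \(\phi_\pi(F)\) is supported on \(\Sigma\), it splits canonically as
\[
\phi_\pi(F)\cong\bigoplus_{k=1}^r i_{k*}\,i_k^*\phi_\pi(F),
\]
and each summand depends only on \(\pi|_{B_k}\). By the Milnor-fiber computation recalled in \S\ref{subsec:multinode-van}, each stalk is one-dimensional, concentrated in the perverse degree. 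This gives the rank-\(r\) point-supported object \(\bigoplus_k i_{k*}\Q_{\{p_k\}}\).

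Next I would lift this splitting to \(MHM(X_0)\). By Proposition~\ref{prop:mhm-nearby-cycles}, \(\phi^H_\pi(\Q^H_{\mathcal X}[3])\) exists and satisfies \(\rat\,\phi^H_\pi=\phi_\pi\). Since \(\rat\) is exact and faithful, and a mixed Hodge module supported on a finite set is a direct sum of pushforwards of mixed Hodge structures from the points, \(\phi^H_\pi\) decomposes as \(\bigoplus_k i_{k*}H_k\). Each \(H_k\) is a rank-one mixed Hodge structure whose rational structure is the local vanishing cohomology at \(p_k\). Comparing with the local (single-node) situation of Proposition~\ref{prop:ODP-compatible}, I would expect \(H_k\cong\Q(-1)\) up to the normalization used there.

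The third step is to pass to cohomology. I would apply hypercohomology to the mixed-Hodge-module triangle \(i^*\to\psi^H_\pi\to\phi^H_\pi\xrightarrow{+1}\) of Proposition~\ref{prop:mhm-nearby-cycles}. This yields the long exact sequence \eqref{eq:nearby-vanishing-les} as an exact sequence of mixed Hodge structures, with the limiting mixed Hodge structure in the middle term. The vanishing term \(\HH^0(X_0,\phi^H_\pi)=\bigoplus_k H_k\) is then a direct sum of \(r\) rank-one Hodge structures, one from each node. Its image in the limiting cohomology (equivalently, the span of the vanishing cycles \(\delta_k\) under the specialization/variation maps) is the vanishing contribution, which establishes the first assertion. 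The same summands \(i_{k*}H_k\) realize, under \(\rat\), the quotient in \eqref{eq:multinode-perv-exact}. Indeed, that quotient arises from \(\mathcal P=\Cone(\var_F)[-1]\) via the map \(\mathcal P\to\phi_\pi(F)\) of the cone triangle, whose image is the cokernel of \(IC_{X_0}\hookrightarrow\mathcal P\) in Proposition~\ref{prop:multinode-perverse-extension}. This gives the common-origin statement.

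The main obstacle is the word ``rank \(r\)'' on the limiting side. The map \(\bigoplus_k H_k\to H^4(X_0)\), or dually the images of the \(\delta_k\) in \(H^3(X_\infty)\), need not be injective, since the vanishing cycles may satisfy global relations. I would therefore state the first claim at the level of the local vanishing module \(\HH^0(\phi^H_\pi)\), which is honestly of rank \(r\). I would then remark that the contribution actually visible in \(H^3_{\lim}\) is its image, whose rank equals the rank of the span of \(\delta_1,\dots,\delta_r\). This is the Hodge-theoretic shadow of the extension class discussed in \S\ref{subsec:multinode-interaction}.
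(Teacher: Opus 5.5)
Your first three steps are sound and more explicit than the paper, whose proof only cites the one-dimensional Milnor-fiber cohomology at each node together with Saito's compatibility of \(\phi^H_\pi\) with \(\rat\). Your closing caveat is also correct. The honest rank-\(r\) object is the local module \(\HH^0(X_0,\phi^H_\pi)=\bigoplus_k H_k\). What survives in \(H^3_{\lim}\) has the rank of \(\operatorname{span}(\delta_1,\dots,\delta_r)\), and this corrects the overstatement in the sentence just before the proposition. A minor point: in the sequence coming from \(i^*\to\psi^H_\pi\to\phi^H_\pi\) the arrow runs \(H^3_{\lim}\to\bigoplus_k H_k\). It is \(\var\) that maps \(\bigoplus_k H_k\) into \(H^3_{\lim}(-1)\), with image of the same rank.

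The genuine gap is the sentence identifying the quotient in \eqref{eq:multinode-perv-exact} with \(\rat(\phi^H_\pi)\) via ``the map \(\mathcal P\to\phi_\pi(F)\) of the cone triangle.'' For \(\mathcal P=\Cone(\var_F)[-1]\) that triangle is \(\mathcal P\to\phi_\pi(F)\xrightarrow{\var_F}\psi_\pi(F)\xrightarrow{+1}\). If \(\mathcal P\) is perverse, as Proposition~\ref{prop:multinode-perverse-extension} assumes, the perverse long exact sequence gives \(0\to\mathcal P\to\phi_\pi(F)\to\psi_\pi(F)\to 0\). So \(\mathcal P\to\phi_\pi(F)\) is a monomorphism (its kernel is \({}^pH^{-1}\psi_\pi(F)=0\)), not a map with kernel \(IC_{X_0}\). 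It would even embed \(\mathcal P\) in an object supported on \(\Sigma\), contradicting \(j^*\mathcal P\cong\Q_U[3]\). Indeed \(j^*\Cone(\var_F)[-1]\cong\Q_U[2]\): the shift is off, and the cone triangle cannot supply the quotient map.

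What works is the unshifted cone, with \(\mathrm{can}\) rather than the cone triangle supplying the quotient map. Assume, as in the standard conifold model, that \(\mathcal X\) is smooth. Then \(\Q_{\mathcal X}[4]\) is an intermediate extension from \(\mathcal X\setminus X_0\), so \(\var_F\) is injective and \(\mathrm{can}\) is surjective. Hence \(\Cone(\var_F)\cong\operatorname{coker}(\var_F)\) is perverse and restricts to \(\Q_U[3]\). Since \(\mathrm{can}\circ\var_F\) is a nilpotent endomorphism of \(\bigoplus_k i_{k*}\Q_{\{p_k\}}\), it vanishes. So \(\mathrm{can}\) descends to a surjection \(\operatorname{coker}(\var_F)\to\phi_\pi(F)\). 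Its kernel is \(\ker(\mathrm{can})/\operatorname{im}(\var_F)=\Gr^W_0\psi_\pi(F)\) for the monodromy filtration of \(N=\var_F\circ\mathrm{can}\) (here \(N^2=0\)). By Saito, \(\Gr^W_0\psi^H_\pi\) is pure, hence semisimple, so it is \(IC_{X_0}\) plus point-supported summands. A stalk Euler-characteristic count at each node rules those summands out: \(0\) for \(\psi_\pi(F)\), \(-2\) for \(IC_{X_0}\) since the link is \(S^2\times S^3\), and \(1\) each for \(W_{-1}\) and \(\Gr^W_1\). As \(\mathrm{can}\), \(\var\) and \(W\) all live in \(MHM(X_0)\), this realizes the quotient as \(\rat\) of \(\phi^H_\pi\) up to Tate twist, i.e.\ as \(\bigoplus_k\rat(i_{k*}H_k)\). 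That is exactly what your third step asserts, and it gives an object-level common origin that the paper's own proof does not supply.
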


\begin{proof}
At each ordinary double point, the Milnor fiber has one-dimensional reduced cohomology in the
middle degree and trivial reduced cohomology in all other degrees
\cite{MilnorSingularPoints,DimcaSheaves}. Hence the local vanishing cohomology contributes one
rank-one summand per node. On the mixed-Hodge-module side, Saito's nearby- and vanishing-cycle
functors refine the classical ones and are compatible with the underlying rational perverse sheaves
under \(\rat\) \cite{SaitoMHM,SaitoDuality}. Therefore the same local vanishing-cycle data governs
both the Hodge-theoretic vanishing contribution and the point-supported quotient in
\eqref{eq:multinode-perv-exact}.
\end{proof}

\begin{remark}
Proposition~\ref{prop:multinode-hodge} does not assert a canonical identification between a global
extension class in the category of mixed Hodge structures and the extension class of
\eqref{eq:multinode-perv-exact} in \(\Perv(X_0;\Q)\). Establishing such a statement would require a
multi-node analogue of the mixed-Hodge-module refinement discussed in the single-node case,
together with an explicit analysis of the corresponding gluing data in Saito's divisor formalism.
What is established here is the common nearby-cycle origin of the local rank-\(r\) vanishing
contribution on both the perverse and Hodge-theoretic sides.
\end{remark}

\subsection{Interpretation and scope}
\label{subsec:multinode-scope}

The multi-node case should therefore be regarded as a structural extension of the single-node
construction rather than as a full classification theorem. The perverse sheaf \(\mathcal P\) is forced to differ from \(IC_{X_0}\) by a finite direct sum of point-supported rank-one contributions, and the Hodge-theoretic nearby-cycle formalism produces the corresponding rank-\(r\)
vanishing contribution in limiting cohomology. A sharper theorem identifying the global extension data on the two sides would require an explicit multi-node mixed-Hodge-module gluing calculation.

\section{General stratified singularities}
\label{sec:strata}

The preceding sections treated degenerations whose singular locus consists of isolated ordinary
double points. We now indicate the corresponding structure in the presence of higher-dimensional
singular strata.

Let
\[
\Sigma \subset X_0
\]
denote the singular locus of the central fiber, and assume that \(\Sigma\) is endowed with a Whitney
stratification
\[
\Sigma=\bigsqcup_{\alpha} S_\alpha .
\]
Let
\[
U:=X_0\setminus \Sigma
\]
be the open smooth stratum, and write
\[
j:U\hookrightarrow X_0,
\qquad
i_\alpha:S_\alpha\hookrightarrow X_0
\]
for the inclusions.

\subsection{Constructibility of nearby and vanishing cycles}
\label{subsec:strata-constructibility}

Let
\[
F:=\Q_{\mathcal X}[n].
\]
For a one-parameter degeneration \(\pi:\mathcal X\to\Delta\), the nearby-cycle and vanishing-cycle
complexes
\[
\psi_\pi(F),\qquad \phi_\pi(F)
\]
are constructible with respect to a suitable stratification of the central fiber; see, for example,
the discussion of nearby and vanishing cycles in \cite{DimcaSheaves}. In particular, the
vanishing-cycle complex is supported on the singular locus \(\Sigma\).

Thus, in the stratified setting, the vanishing contribution is not in general a direct sum of
point-supported rank-one objects, but rather a constructible object supported on the union of the
singular strata. Equivalently, after passing to the perverse normalization, one obtains a perverse
sheaf on \(X_0\) supported on \(\Sigma\) and constructible with respect to the chosen Whitney
stratification. This is the natural replacement, in the positive-dimensional-stratum case, for the
direct sum of skyscraper contributions appearing in the isolated-node situation.

\subsection{The corrected perverse object}
\label{subsec:strata-perverse}

Consider again the variation morphism
\[
\var_F:\phi_\pi(F)\to \psi_\pi(F),
\]
and define
\[
\mathcal P:=\Cone\!\bigl(\var_F:\phi_\pi(F)\to\psi_\pi(F)\bigr)[-1].
\]
In the single-node case, and more briefly in the multi-node case, the earlier paper showed that the cone construction yields a perverse object whose restriction to the smooth locus is the shifted constant sheaf and whose quotient by the intersection complex is supported on the singular locus. For the stratified case, that paper indicated the same pattern only at the level of structural extension rather than full proof. The correct general statement is therefore the following.

\begin{proposition}
\label{prop:stratified-perverse-extension}
Assume $\mathcal P:=\Cone(\var_F)[-1]$
belongs to \(\Perv(X_0;\Q)\), and that its restriction to the smooth stratum satisfies
\[
j^*\mathcal P \cong \Q_U[n].
\]
Then there exists a short exact sequence in \(\Perv(X_0;\Q)\)
\begin{equation}
0\longrightarrow IC_{X_0}
\longrightarrow \mathcal P
\longrightarrow \mathcal V
\longrightarrow 0,
\label{eq:stratified-perv-exact}
\end{equation}
where \(\mathcal V\) is a perverse sheaf supported on \(\Sigma\) and constructible with respect to
the chosen stratification.
\end{proposition}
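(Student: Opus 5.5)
The plan mirrors the proof of Proposition~\ref{prop:multinode-perverse-extension}, now with a positive-dimensional closed complement. We work with the open--closed decomposition
\[
X_0=U\sqcup\Sigma,\qquad j:U\hookrightarrow X_0,\qquad i:\Sigma\hookrightarrow X_0 ,
\]
and with the perverse $t$-structure on $D^b_c(X_0,\Q)$ adapted to the Whitney stratification $\{U\}\cup\{S_\alpha\}$. The hypothesis $j^*\mathcal P\cong\Q_U[n]\cong j^*IC_{X_0}$ gives a morphism $IC_{X_0}\to\mathcal P$ that is an isomorphism over $U$. To produce it, we use the standard description of the intermediate extension as the image of ${}^p j_!\to {}^p j_*$. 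Adjunction gives a canonical map $\mathcal P\to {}^p j_*j^*\mathcal P={}^p j_*\Q_U[n]$, and a canonical map ${}^p j_!\Q_U[n]\to\mathcal P$. Their composite is the natural map ${}^p j_!\to{}^p j_*$, whose image is $IC_{X_0}$. Hence $IC_{X_0}$ is a subquotient of $\mathcal P$ in a canonical way.

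To get an honest monomorphism $IC_{X_0}\hookrightarrow\mathcal P$, we need $\mathcal P$ to have no nonzero subobject supported on $\Sigma$. Equivalently, we need ${}^p\mathcal H^0(i^!\mathcal P)=0$, which is the same as the map $\mathcal P\to{}^p j_*j^*\mathcal P$ being injective. I expect this to be the main obstacle, because it is not formal from perversity and the condition on $U$ alone. My first attempt would use the cone description: the triangle
\[
\mathcal P\to\phi_\pi(F)\xrightarrow{\var}\psi_\pi(F)\xrightarrow{+1}
\]
exhibits $\mathcal P$ as the kernel of $\var$ in the perverse heart, once we know $\var$ is an epimorphism on the relevant part. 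A subobject of $\mathcal P$ supported on $\Sigma$ would then have to be controlled by $\psi_\pi(F)$, which for $F=\Q_{\mathcal X}[n]$ with $\mathcal X$ smooth has no sub supported in $\Sigma$ by the standard properties of $\psi$ (the subobject ${}^p\mathcal H^0 i^!$ of $\psi_\pi F$ vanishes since $\ker N$ on $\psi_\pi$ injects into $i^*F[-1]$ in the unipotent part).

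If that argument stalls, the fallback is to phrase the statement via the image. Set $\mathcal P':=\mathcal P/\,{}^p i_*{}^p\mathcal H^0(i^!\mathcal P)$, so that $IC_{X_0}\hookrightarrow\mathcal P'$. Alternatively, observe that as long as $\operatorname{im}({}^p j_!\to\mathcal P)\to IC_{X_0}$ is an isomorphism, the sequence exists. In the intended applications, self-duality of $\mathcal P$ would also give the vanishing of the $\Sigma$-supported subobject from the vanishing of the $\Sigma$-supported quotient of ${}^p j_!$-type.

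Granting the monomorphism, set $\mathcal V:=\operatorname{coker}(IC_{X_0}\to\mathcal P)$ in the abelian category $\Perv(X_0;\Q)$, so $\mathcal V$ is perverse. Applying the exact functor $j^*$ gives $j^*\mathcal V=0$ because $j^*IC_{X_0}\to j^*\mathcal P$ is an isomorphism. Hence $\mathcal V\cong i_*i^*\mathcal V$ is supported on $\Sigma$. Finally, $\psi_\pi F$ and $\phi_\pi F$ are constructible for the chosen stratification, as recalled in \S\ref{subsec:strata-constructibility}. So is $IC_{X_0}$, since the stratification is Whitney. Constructibility with respect to a fixed stratification is preserved by cones, kernels and cokernels in the perverse heart, so $\mathcal P$ and therefore $\mathcal V$ are constructible with respect to it. This yields \eqref{eq:stratified-perv-exact}.
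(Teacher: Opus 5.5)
\textbf{There is a genuine gap.} It sits exactly at the step you flag, the monomorphism $IC_{X_0}\hookrightarrow\mathcal P$, and your argument does not close it.

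You are right that this step is not formal. The paper's own proof only cites ``the universal property of the intermediate extension,'' and that does not give it: ${}^pj_!\Q_U[n]$ is a perverse extension of $\Q_U[n]$ that contains no copy of $IC_{X_0}$ unless the two coincide. Note also that ${}^p\mathcal H^0(i_\Sigma^!\mathcal P)=0$ is sufficient but not necessary: $IC_{X_0}\oplus\mathcal W$, with $\mathcal W\neq0$ supported on $\Sigma$, contains $IC_{X_0}$. Your justification fails on three counts.

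\begin{enumerate}
\item If $\phi_\pi F$, $\psi_\pi F$ and $\mathcal P$ are all perverse, then $\mathcal P=\ker(\var)\subseteq\phi_\pi F$. So subobjects of $\mathcal P$ are subobjects of $\phi_\pi F$, not ``controlled by $\psi_\pi F$.'' Worse, $\mathcal P$ is then supported on $\Sigma$, which contradicts $j^*\mathcal P\cong\Q_U[n]$.
\item The claim that $\psi_\pi F$ has no nonzero $\Sigma$-supported subobject is false. Take a single threefold node with smooth total space. Then $\mathrm{can}$ maps onto $\phi_\pi F\cong i_*\Q_{\{p\}}$ and $\var$ is injective, so $\operatorname{im}N=\operatorname{im}\var\cong i_*\Q_{\{p\}}\subseteq\psi_\pi F$. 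Equivalently, $\ker N\cong\Q_{X_0}[3]$ contains $i_*\Q_{\{p\}}$, because the link $S^2\times S^3$ of the node has $H^2\neq0$. So relating $\ker N$ to $i^*F$ excludes nothing.
\item The fallback proves a statement about $\mathcal P'$, not $\mathcal P$. The self-duality heuristic also runs backwards: $D$ exchanges ${}^p\mathcal H^0(i_\Sigma^*)$ and ${}^p\mathcal H^0(i_\Sigma^!)$. Hence a self-dual $\mathcal P$ with nonzero $\Sigma$-supported quotient $\mathcal V$ has the nonzero $\Sigma$-supported subobject $D\mathcal V$.
\end{enumerate}

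\textbf{What is missing is input about the total space, not about $\psi$.}

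The second fundamental triangle $\phi_\pi\xrightarrow{\var}\psi_\pi\to i^!(-)[2]\xrightarrow{+1}$ shows that $\Cone(\var)$ is, in any normalization, a shift of $i^!\Q_{\mathcal X}$. Since $j^*i^!\Q_{\mathcal X}\cong\Q_U[-2]$, the hypothesis $j^*\mathcal P\cong\Q_U[n]$ can hold only if $\mathcal P\cong i^!\Q_{\mathcal X}[n+2]$. In your normalization, where $\phi_\pi F$ and $\psi_\pi F$ lie in the heart, one gets $i^!\Q_{\mathcal X}[n+1]$ and $j^*\mathcal P\cong\Q_U[n-1]$; that is the inconsistency in item 1.

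Now assume $\mathcal X$ is smooth of dimension $n+1$ and $\dim\Sigma\le n-1$. For $i_\Sigma:\Sigma\hookrightarrow X_0$ one has $i_\Sigma^!\mathcal P\cong(i\circ i_\Sigma)^!\Q_{\mathcal X}[n+2]\cong\omega_\Sigma[-n]$. Since $\omega_\Sigma[-\dim\Sigma]=D(\Q_\Sigma[\dim\Sigma])\in{}^pD^{\ge0}$, this object lies in ${}^pD^{\ge1}$. Hence ${}^p\mathcal H^0(i_\Sigma^!\mathcal P)=0$.

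Your first paragraph then correctly gives $IC_{X_0}=\operatorname{im}({}^pj_!\Q_U[n]\to\mathcal P)\hookrightarrow\mathcal P$, with cokernel $i_{\Sigma*}\,{}^p\mathcal H^0(i_\Sigma^*\mathcal P)$ supported on $\Sigma$. Some geometric hypothesis of this kind is genuinely needed: the step does not follow from perversity and the restriction to $U$ alone.

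Separately, \S\ref{subsec:strata-constructibility} only provides constructibility for a suitable stratification. Constructibility of $\mathcal V$ for the chosen one therefore requires that stratification to be adapted to $\pi$.
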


\begin{proof}
By assumption, \(\mathcal P\) is a perverse extension of \(\Q_U[n]\) from the open stratum \(U\) to
\(X_0\). The recollement formalism for perverse sheaves associated with the open--closed
decomposition
\[
X_0 = U \sqcup \Sigma
\]
provides canonical exact sequences in the heart; see \cite{BBD,MacPhersonVilonen1986,GMV1996}.
Since
\[
j^*IC_{X_0}\cong \Q_U[n]\cong j^*\mathcal P,
\]
the universal property of the intermediate extension yields a monomorphism
\[
IC_{X_0}\hookrightarrow \mathcal P.
\]
Its cokernel is a perverse sheaf supported on the closed subset \(\Sigma\). Because \(\mathcal P\)
and \(IC_{X_0}\) are constructible with respect to the chosen stratification, the cokernel is also
constructible with respect to that stratification. Denoting this cokernel by \(\mathcal V\), one
obtains the exact sequence \eqref{eq:stratified-perv-exact}.
\end{proof}

\begin{remark}
In contrast with the isolated-node case, one should not expect in general a canonical decomposition
\[
\mathcal V \cong \bigoplus_\alpha i_{\alpha *}V_\alpha
\]
with \(V_\alpha\) local systems on the strata \(S_\alpha\) without further hypotheses. What is
canonically determined by the nearby-cycle construction is the perverse sheaf \(\mathcal V\)
supported on \(\Sigma\), together with its constructibility properties. A finer decomposition by
strict support would require additional semisimplicity or decomposition results.
\end{remark}

\subsection{Extension data along the singular locus}
\label{subsec:strata-ext}

The exact sequence \eqref{eq:stratified-perv-exact} defines an extension class
\[
[\mathcal P]\in
\Ext^1_{\Perv(X_0;\Q)}(\mathcal V,IC_{X_0}).
\]
This class records how the vanishing-cycle contribution supported on the singular strata is glued
to the intersection complex of the central fiber. In this sense, the stratified case is formally
parallel to the isolated-node and multi-node situations: the corrected perverse object differs from
the intersection complex by a singular contribution determined by nearby and vanishing cycles, but
the global information lies in the extension class rather than merely in the support of the quotient.

\subsection{Mixed Hodge modules}
\label{subsec:strata-mhm}

The same construction admits a Hodge-theoretic interpretation at the level of nearby-cycle formalism. By Saito's theory, nearby-cycle and vanishing-cycle functors are defined for mixed Hodge modules and are compatible with the underlying rational perverse sheaves under the realization
functor
\[
\rat:MHM(X_0)\to \Perv(X_0;\Q)
\]
\cite{SaitoMHM,SaitoDuality}. This compatibility is used explicitly in the work of Banagl--Budur--Maxim, where a perverse sheaf constructed from nearby-cycle data is shown to underlie a mixed Hodge module in a related isolated-singularity setting \cite{BanaglBudurMaxim}.  Accordingly, the vanishing contribution supported on \(\Sigma\) and the corresponding Hodge-theoretic
degeneration data arise from the same nearby-cycle and vanishing-cycle formalism in the category of mixed Hodge modules. What is justified at this level of generality is therefore the following.

\begin{proposition}
\label{prop:strata-mhm}
Let \(\pi:\mathcal X\to\Delta\) be a one-parameter degeneration whose central fiber has singular locus \(\Sigma\) endowed with a Whitney stratification. Then the perverse sheaf \(\mathcal V\) in \eqref{eq:stratified-perv-exact} and the corresponding Hodge-theoretic vanishing contribution are both functorially derived from the nearby-cycle and vanishing-cycle formalism in Saito's category of mixed Hodge modules.
\end{proposition}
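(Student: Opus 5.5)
The plan follows the pattern of Propositions~\ref{prop:ODP-compatible} and \ref{prop:multinode-hodge}. The claim is a \emph{common-origin} statement, not an identification of extension classes. So the proof will construct both objects as images of functors applied to the same mixed-Hodge-module data. I will not compare them in $\Ext^1$.

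\emph{Step 1: the vanishing-cycle object in both settings.} Lift $F=\Q_{\mathcal X}[n]$ to the constant mixed Hodge module complex $\Q^H_{\mathcal X}[n]\in D^bMHM(\mathcal X)$. Apply Saito's functors to obtain $\psi^H_\pi(\Q^H_{\mathcal X}[n])$ and $\phi^H_\pi(\Q^H_{\mathcal X}[n])$, shifted by $-1$ so that they lie in $MHM(X_0)$. Saito also provides the morphisms $\mathrm{can}$ and $\var$ between them, with $\var\circ\mathrm{can}=N$. By Proposition~\ref{prop:mhm-nearby-cycles}, applying $\rat$ recovers $\psi_\pi(F)$, $\phi_\pi(F)$ and $\var_F$ in $\Perv(X_0;\Q)$. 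Since $\rat$ is exact and faithful and commutes with these functors, $\rat$ applied to $\Cone(\var^H)[-1]$ is $\mathcal P$. Here the cone is taken in $D^bMHM(X_0)$, and the realization is compared in $D^b_c(X_0,\Q)$ via Saito's realization on derived categories.

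\emph{Step 2: locating $\mathcal V$.} Proposition~\ref{prop:stratified-perverse-extension} gives $\mathcal V=\mathcal P/IC_{X_0}$. By the octahedral axiom applied to $\var_F$, $\mathcal V$ is determined by $\phi_\pi(F)$ together with the comparison of $\psi_\pi(F)$ with $j_{!*}j^*\psi_\pi(F)$. Concretely, $\mathcal V$ is a subquotient built from $\phi_\pi(F)$ and the kernel and cokernel of $\var_F$ restricted along $\Sigma$. All of these are realizations of kernels, cokernels and intermediate extensions of Saito's objects. This works because $MHM(X_0)$ is abelian, $\rat$ is exact, and $j_{!*}$ lifts via $IC^H_{X_0}$.

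\emph{Step 3: the Hodge-theoretic side.} The Hodge-theoretic vanishing contribution is the image of $\mathbb H^*(X_0,\phi^H_\pi)\to\mathbb H^{*+1}(X_0,i^*)$ in the long exact sequence \eqref{eq:nearby-vanishing-les}. Because that sequence is the hypercohomology of the triangle of Proposition~\ref{prop:mhm-nearby-cycles}, it is a sequence of mixed Hodge structures. Hence this contribution is obtained by applying hypercohomology to $\phi^H_\pi$. The perverse sheaf $\mathcal V$ is obtained by applying $\rat$ and the Step~2 operations to the same $\phi^H_\pi$. This establishes the common functorial origin.

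\emph{Main obstacle.} The delicate point is Step~2. Cones are not functorial in triangulated categories, and in the stratified setting $\mathcal V$ need not be a simple realization of $\phi^H_\pi$. To avoid dependence on choices, I will first work in the abelian category using the gluing description of \cite[Prop.~0.3]{SaitoMHM}. In that description $\var$ is an honest morphism of mixed Hodge modules, so kernel and cokernel are canonical. I will then assert only that $\mathcal V$ is built from the realization of these canonical objects, without claiming $\mathcal V=\rat(\mathcal V^H)$ for a specific lift.
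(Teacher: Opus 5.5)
Your proposal is essentially the paper's argument. Both rest only on Saito's lifts $\psi^H_\pi$, $\phi^H_\pi$, $\var$ and their compatibility with $\rat$, with $\mathcal V$ taken as the quotient term of Proposition~\ref{prop:stratified-perverse-extension} and the Hodge-theoretic contribution read off from the same mixed-Hodge-module nearby/vanishing-cycle data, so the conclusion is a common-origin statement rather than a comparison of extension classes. The octahedral description of $\mathcal V$ in Step~2, via $\phi_\pi(F)$ and $\ker/\operatorname{coker}$ of $\var_F$, is not needed and is not justified as written; the paper only uses that $\mathcal V$ is by definition the quotient in the extension built from $\Cone(\var_F)[-1]$, whose ingredients are realizations of Saito's objects, and that already suffices at this level of precision.
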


\begin{proof}
The existence of nearby-cycle and vanishing-cycle functors in the category of mixed Hodge modules, together with their compatibility with the underlying rational perverse sheaves under the functor \(\rat\), is part of Saito's formalism \cite{SaitoMHM,SaitoDuality}. Applying these functors to the degeneration \(\pi\) produces the Hodge-theoretic nearby- and vanishing-cycle data and, after passing to \(\rat\), the corresponding constructible and perverse sheaf data on \(X_0\). Since \(\mathcal V\) is defined as the quotient term in the perverse extension obtained from this nearby-cycle construction, both the perverse and Hodge-theoretic singular contributions arise from the same formal mechanism.
\end{proof}

\begin{remark}
Proposition~\ref{prop:strata-mhm} does not assert the existence of a fully internal mixed-Hodge-module
extension
\[
0\to IC^H_{X_0}\to \mathcal P^H\to \mathcal V^H\to 0
\]
realizing \eqref{eq:stratified-perv-exact} for an arbitrary stratified singular locus. Establishing
such a statement would require a more explicit analysis of the corresponding gluing data in Saito's
formalism, together with a careful study of the support decomposition of the vanishing-cycle object.
The present section isolates the structural form of the stratified extension and its common
nearby-cycle origin.
\end{remark}

\subsection{Scope}
\label{subsec:strata-scope}

The isolated-node and multi-node cases treated earlier are special cases in which the singular contribution is supported on zero-dimensional strata and can therefore be described by direct sums of skyscraper perverse sheaves. In the general stratified situation, the quotient term in \eqref{eq:stratified-perv-exact} is replaced by a perverse sheaf \(\mathcal V\) supported on the
whole singular locus and carrying the corresponding vanishing-cycle information. Thus the natural generalization of the corrected perverse object is not a sum of local point contributions, but a stratified singular contribution glued to the intersection complex by an extension class.


\section{Uniqueness and Verdier self-duality of the corrected perverse object}
\label{sec:uniq-verdier-self-dual}

In this section we prove that the corrected perverse object
\[
\mathcal P:=\Cone(\var_F)[-1]
\]
is Verdier self-dual and is uniquely characterized by its restriction to the smooth locus,
its rank-one point-supported singular contribution, and self-duality. The proof uses the
MacPherson--Vilonen zig-zag description of perverse sheaves with isolated singularity together
with the zig-zag duality formalism developed in \cite{RahmanATMP}.

Let
\[
j:U=X_0\setminus\{p\}\hookrightarrow X_0,
\qquad
i:\{p\}\hookrightarrow X_0
\]
be the open and closed inclusions, where \(X_0\) has a single ordinary double point \(p\).

\subsection{Zig-zags of the endpoint objects}

Let
\[
\mu:\Perv(X_0;\Q)\longrightarrow Z(X_0,p)
\]
denote the MacPherson--Vilonen zig-zag functor. We use the isolated-stratum zig-zag formalism
and its duality theory in the form developed in \cite{RahmanATMP}. In particular, \(\mu\) is
bijective on isomorphism classes and compatible with duality.

\begin{proposition}
\label{prop:IC-zigzag-paper2}
The intersection-complex perverse sheaf \(IC_{X_0}\) has zig-zag
\[
\mu(IC_{X_0})\cong (\Q_U[3],0,0,0,0,0).
\]
In particular, \(IC_{X_0}\) is self-dual at the zig-zag level.
\end{proposition}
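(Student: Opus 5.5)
We will compute the zig-zag \(\mu(IC_{X_0})\) directly from the MacPherson--Vilonen description for the isolated stratum \(\{p\}\), and then deduce self-duality from the compatibility of \(\mu\) with Verdier duality.

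\textbf{Conventions.} Recall that for an isolated point the zig-zag of a perverse sheaf \(\mathcal F\) records:
\begin{itemize}
\item the restriction \(j^*\mathcal F\) on \(U\);
\item the local data at \(p\): the vector spaces
\[
H^{-1}(i^*\mathcal F),\quad H^0(i^*\mathcal F),\quad H^0(i^!\mathcal F),\quad H^1(i^!\mathcal F);
\]
\item the structure maps between these spaces and the link cohomology of \(j^*\mathcal F\).
\end{itemize}
In the normalization of \cite{RahmanATMP}, the entries after the first slot are exactly the parts of the costalk/stalk data at \(p\) that are not already determined by the link cohomology of \(j^*\mathcal F\). We will fix this normalization at the outset, so that the five trailing entries of the tuple are the "excess" stalk and costalk spaces together with the maps relating them.

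\textbf{Step 1: the open part.} By definition \(IC_{X_0}=j_{!*}\Q_U[3]\), so \(j^*IC_{X_0}\cong\Q_U[3]\). This is the first entry of the zig-zag.

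\textbf{Step 2: vanishing of the local data.} We use the Deligne-type characterization of the intermediate extension across a point stratum in a space of complex dimension \(3\):
\[
\mathcal H^k(i^*j_{!*}\Q_U[3])=0 \ \text{ for } k\ge 0,
\qquad
\mathcal H^k(i^!j_{!*}\Q_U[3])=0 \ \text{ for } k\le 0.
\]
Equivalently, \(i^*IC_{X_0}=\tau_{\le -1}\,i^*Rj_*\Q_U[3]\). Hence the stalk cohomology in degrees \(\ge 0\) vanishes, and the costalk cohomology in degrees \(\le 0\) vanishes. The remaining stalk and costalk groups, in degrees \(\le -1\) and \(\ge 1\), are entirely determined by the link cohomology of \(\Q_U[3]\) near \(p\), i.e.\ by the cohomology of \(L\times(0,\epsilon)\), where \(L\) is the link \(S^2\times S^3\).

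In the zig-zag normalization these link-determined groups are absorbed into the open datum. The genuinely new entries—the cokernel and kernel spaces and the connecting maps of the zig-zag—therefore all vanish. This gives
\[
\mu(IC_{X_0})\cong(\Q_U[3],0,0,0,0,0).
\]
Equivalently, we may argue via bijectivity of \(\mu\) on isomorphism classes. The zig-zag \((\Q_U[3],0,\dots,0)\) corresponds to the minimal extension: its zero local data means it has no subobject or quotient supported at \(p\). Since \(j_{!*}\) is characterized precisely by having no nonzero subobjects or quotients supported on \(\{p\}\), this object must be \(IC_{X_0}\).

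\textbf{Step 3: self-duality.} We have \(\mathbb D(\Q_U[3])\cong\Q_U[3]\) since \(U\) is smooth of complex dimension \(3\). The duality on zig-zags from \cite{RahmanATMP} dualizes the open entry and dualizes and reverses the local entries. Zero entries are sent to zero entries, so the tuple \((\Q_U[3],0,0,0,0,0)\) is fixed by the zig-zag duality. Compatibility of \(\mu\) with duality then gives \(\mu(\mathbb D\,IC_{X_0})\cong\mu(IC_{X_0})\), and bijectivity of \(\mu\) on isomorphism classes yields \(\mathbb D\,IC_{X_0}\cong IC_{X_0}\). This is consistent with the standard fact \(\mathbb D\,j_{!*}=j_{!*}\,\mathbb D\).

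\textbf{Main obstacle.} The key difficulty is the bookkeeping of Step 2: we must verify that the convention of \cite{RahmanATMP} really places all link-determined groups into the first slot, so that the five remaining entries vanish, rather than recording nonzero link cohomology such as \(H^2(L)\) or \(H^3(L)\) in those slots. We will settle this first by checking the convention on a simpler object, for instance \(i_*\Q_{\{p\}}\), before applying it to \(IC_{X_0}\).
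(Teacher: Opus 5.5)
Your proposal is correct and takes the same route as the paper. Minimality of \(j_{!*}\) forces the point terms to vanish, and a zig-zag with zero local data and self-dual open part \(\Q_U[3]\) is fixed by zig-zag duality; your Deligne stalk/costalk conditions supply the justification that the paper only asserts. The obstacle you flagged goes away once you read the slots as \(A_K=H^0(i^!K)\) and \(B_K=H^0(i^*K)\), the middle terms of the six-term sequence obtained from \(i^!K\to i^*K\to i^*Rj_*j^*K\xrightarrow{+1}\); the link groups \(H^{-1}(i^*Rj_*L_K)\) and \(H^0(i^*Rj_*L_K)\) are the fixed outer terms of that sequence, not tuple entries, and \(A_K,B_K\) vanish for \(IC_{X_0}\) by exactly the conditions you wrote. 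Your proposed test on \(i_*\Q_{\{p\}}\) could not have settled this question anyway, since its open part is zero and so it has no link contributions to detect.
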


\begin{proof}
The intersection complex is the minimal extension of the constant perverse sheaf on the smooth
stratum. In the MacPherson--Vilonen zig-zag model, this means precisely that the point terms
vanish and only the open-stratum local system remains. Thus
\[
\mu(IC_{X_0})\cong (\Q_U[3],0,0,0,0,0).
\]
The dual zig-zag has the same form, so \(\mu(IC_{X_0})\) is self-dual; compare \cite{RahmanATMP}.
\end{proof}

\begin{proposition}
\label{prop:skyscraper-zigzag-paper2}
The point-supported perverse sheaf \(i_*\Q_{\{p\}}\) has zig-zag
\[
\mu(i_*\Q_{\{p\}})\cong (0,\Q,\Q,0,\id,0).
\]
In particular, \(i_*\Q_{\{p\}}\) is self-dual at the zig-zag level.
\end{proposition}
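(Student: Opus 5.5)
We need the zig-zag of $i_*\Q_{\{p\}}$. We use the MacPherson--Vilonen description of perverse sheaves on $X_0=U\sqcup\{p\}$. In this model a perverse sheaf $\mathcal F$ is encoded by a tuple
\[
(L, A, B, C, \alpha, \beta),
\]
where:
\begin{itemize}[label={}]
\item $L=j^*\mathcal F$ is the open-stratum datum;
\item $A,B,C$ are finite-dimensional vector spaces at $p$, built from the perverse cohomology of $i^*\mathcal F$ and $i^!\mathcal F$ together with the link data of $L$;
\item $\alpha,\beta$ are the connecting maps forming the zig-zag.
\end{itemize}
We fix the ordering of entries as in \cite{RahmanATMP}, the same convention already used in Proposition~\ref{prop:IC-zigzag-paper2}.

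\emph{Step 1: the open-stratum and link terms vanish.} Since $j^*i_*\Q_{\{p\}}=0$, the open-stratum entry is $0$. Every term built from the link cohomology of that local system therefore vanishes as well. This is the exact mirror of Proposition~\ref{prop:IC-zigzag-paper2}, where only the open entry survived.

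\emph{Step 2: compute the point terms.} For a skyscraper we have
\[
i^*i_*\Q_{\{p\}}\cong i^!i_*\Q_{\{p\}}\cong \Q
\]
in degree $0$. The two point entries of the zig-zag that record $i^*$ and $i^!$ (the ``vanishing'' space and the space it factors through) are therefore each $\Q$. The natural map $i^!\mathcal F\to i^*\mathcal F$ induced by adjunction is the identity of $\Q$. The maps into and out of the link terms are zero, because their sources or targets are zero. This yields
\[
\mu(i_*\Q_{\{p\}})\cong(0,\Q,\Q,0,\id,0).
\]
As a consistency check, the zig-zag exactness condition must hold: the middle map must be an isomorphism precisely when the link terms vanish. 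Here it does, with the middle map equal to $\id$.

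\emph{Step 3: self-duality.} Apply the zig-zag duality of \cite{RahmanATMP}. Verdier duality reverses the zig-zag, dualizes each vector space, transposes the maps, and replaces the open local system by its dual. The dual zig-zag is therefore
\[
(0,\Q^\vee,\Q^\vee,0,\id^\vee,0)\cong(0,\Q,\Q,0,\id,0),
\]
using $\Q^\vee\cong\Q$ and $\id^\vee=\id$. Since $\mu$ is bijective on isomorphism classes and compatible with duality, this agrees with the geometric fact $\mathbb D(i_*\Q_{\{p\}})\cong i_*\mathbb D\Q_{\{p\}}\cong i_*\Q_{\{p\}}$.

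The main obstacle is bookkeeping with the conventions. We must check that the two nonzero slots, and the slot carrying $\id$, are the ones prescribed by the ordering in \cite{RahmanATMP}. We must also check that duality swaps the two point slots compatibly with that ordering, so that the pattern is genuinely fixed under duality rather than merely fixed up to a permutation. The plan is to verify this by matching the convention against the already-established form $(\Q_U[3],0,\dots)$ for $IC_{X_0}$.
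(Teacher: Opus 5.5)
Your argument is correct and is essentially the paper's proof, only spelled out in more detail: $j^*i_*\Q_{\{p\}}=0$ kills the link terms, so the exact zig-zag collapses to $0\to\Q\xrightarrow{\id}\Q\to 0$ with point terms $H^0(i^!i_*\Q_{\{p\}})=H^0(i^*i_*\Q_{\{p\}})=\Q$, and duality (swap and dualize the point slots, transpose the maps) returns the same tuple. Two cosmetic fixes are needed. In the paper's convention the tuple is $(L,A,B,\alpha,\beta,\gamma)$, and the link groups $H^{-1}(i^*Rj_*L)$, $H^0(i^*Rj_*L)$ appear only as the source of $\alpha$ and the target of $\gamma$, not as an entry ``$C$''. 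Also, your exactness check holds in one direction only: vanishing link terms force $\beta$ to be an isomorphism, but $\beta$ can be an isomorphism when the link terms are nonzero, e.g.\ for $IC_{X_0}\oplus i_*\Q_{\{p\}}$.
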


\begin{proof}
Since \(i_*\Q_{\{p\}}\) is supported entirely at the singular point, its open-stratum part vanishes.
The MacPherson--Vilonen exact zig-zag sequence therefore collapses to the point-supported
rank-one situation, yielding
\[
\mu(i_*\Q_{\{p\}})\cong (0,\Q,\Q,0,\id,0).
\]
The dual zig-zag has the same form, so \(\mu(i_*\Q_{\{p\}})\) is self-dual.
\end{proof}

\subsection{\texorpdfstring{$\mathcal P$}{P} as a non-split zig-zag extension}

The corrected perverse object \(\mathcal P\) fits into the short exact sequence
\[
0\longrightarrow IC_{X_0}\longrightarrow \mathcal P\longrightarrow i_*\Q_{\{p\}}\longrightarrow 0.
\]
Applying the zig-zag functor gives a zig-zag
\[
\mu(\mathcal P)=(L_{\mathcal P},A_{\mathcal P},B_{\mathcal P},\alpha_{\mathcal P},\beta_{\mathcal P},\gamma_{\mathcal P})
\]
with open-stratum term
\[
L_{\mathcal P}\cong \Q_U[3].
\]
Thus \(\mu(\mathcal P)\) is an extension zig-zag of \(\mu(i_*\Q_{\{p\}})\) by \(\mu(IC_{X_0})\).

In the ordinary double point case, the local singular contribution is rank one, so the relevant
extension space is one-dimensional. The corrected object \(\mathcal P\) determines the unique non-split
extension class. Equivalently, \(\mu(\mathcal P)\) is the unique nontrivial zig-zag extension of
\[
(0,\Q,\Q,0,\id,0)
\]
by
\[
(\Q_U[3],0,0,0,0,0)
\]
with open part \(\Q_U[3]\).

\begin{proposition}[Full zig-zag of \texorpdfstring{$\mathcal P$}{P}]
\label{prop:full-zigzag-P}
With the standard splitting convention for the unique non-split extension class, one has
\[
\mu(\mathcal P)\cong (\Q_U[3],\Q,\Q,0,\id,0).
\]
Equivalently, the corrected perverse object is represented by the unique nontrivial zig-zag extension of
\[
(0,\Q,\Q,0,\id,0)
\]
by
\[
(\Q_U[3],0,0,0,0,0).
\]
\end{proposition}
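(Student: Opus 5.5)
The plan is to compute \(\mu(\mathcal P)\) componentwise from the short exact sequence
\[
0\to IC_{X_0}\to\mathcal P\to i_*\Q_{\{p\}}\to 0
\]
and the two endpoint zig-zags recorded in Propositions~\ref{prop:IC-zigzag-paper2} and~\ref{prop:skyscraper-zigzag-paper2}. The MacPherson--Vilonen functor \(\mu\) is exact on the abelian category \(\Perv(X_0;\Q)\) with the fixed stratification \(U\sqcup\{p\}\), since each component is built from exact functors: restriction to \(U\), and the perverse cohomologies of \(i^*j_*\) and \(i^!\) near the point. So \(\mu(\mathcal P)\) is an extension, in the category \(Z(X_0,p)\), of \((0,\Q,\Q,0,\id,0)\) by \((\Q_U[3],0,0,0,0,0)\).

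\emph{Step one: the components.} Additivity of dimensions in each slot already forces the form of \(\mu(\mathcal P)\). The open component is \(\Q_U[3]\), which we also know directly from \(j^*\mathcal P\cong\Q_U[3]\). The two point components are the direct sums of those of the endpoints, namely \(0\oplus\Q=\Q\) in each point slot.

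\emph{Step two: the structure maps.} I would determine them slot by slot. The map \(\alpha_{\mathcal P}\) goes between the local-system datum of \(U\) near \(p\) and a point vector space. It must restrict to \(0\) on the sub-zig-zag and induce \(0\) on the quotient, so it can be nonzero only as an "off-diagonal" extension term. I would argue it vanishes using the rank-one Milnor-fiber input: the link cohomology of \(\Q_U[3]\) in the relevant degree is trivial for the node, since the link is \(S^2\times S^3\) and the relevant degree is that of the middle-dimensional vanishing part. This kills the possible mixing term and leaves \(\alpha_{\mathcal P}=0\). The map \(\beta_{\mathcal P}:\Q\to\Q\) is compatible with \(\id\) on the quotient, so with the chosen identification of the point slot of \(\mathcal P\) with that of \(i_*\Q_{\{p\}}\) (the stated splitting convention) it equals \(\id\). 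Finally \(\gamma_{\mathcal P}=0\) follows by the same degree argument as for \(\alpha_{\mathcal P}\).

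\emph{Step three: the extension.} With the six components fixed, the extension data in \(\Ext^1_{Z}\) is encoded in how the open datum and the point data are glued. Since \(\mu\) is bijective on isomorphism classes, the non-split class of \(\mathcal P\) from \cite{RahmanSchoberPaper} corresponds to the nontrivial zig-zag extension. This gives the displayed normal form together with its uniqueness up to isomorphism.

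The main obstacle is Step two. The exact zig-zag conditions and the vanishing of the off-diagonal maps require the explicit link cohomology of the node, and care is needed so that non-splitness is not erased by the chosen splitting convention. I would first check the one-dimensionality of the \(\Ext^1\) space against the computation \(\Ext^1(i_*\Q,IC_{X_0})\cong H^{-1}(i^!IC_{X_0})\), and then use it to pin the class.
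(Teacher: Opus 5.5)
Your outline is essentially the paper's own argument (endpoint zig-zags plus a splitting convention). There is a genuine gap, however: the normal form you are aiming at cannot hold for a non-split extension, and three of your steps fail.

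First, $\mu$ is not slotwise exact. In the appendix's convention the point slots are $A_K=H^0(i^!K)$ and $B_K=H^0(i^*K)$. This choice is what makes $H^{-1}(i^*Rj_*L_K)\to A_K\to B_K\to H^0(i^*Rj_*L_K)$ exact, via the triangle $i^!K\to i^*K\to i^*Rj_*j^*K$. On $\Perv(X_0;\Q)$, the functor $H^0(i^!\,\cdot\,)$ is only left exact and $H^0(i^*\,\cdot\,)$ is only right exact, so ``additivity of dimensions in each slot'' is unjustified.

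Second, the degree argument for $\alpha_{\mathcal P}=\gamma_{\mathcal P}=0$ fails. With link $S^2\times S^3$ one has $H^{-1}(i^*Rj_*\Q_U[3])\cong H^2(S^2\times S^3)\cong\Q$ and $H^0(i^*Rj_*\Q_U[3])\cong H^3(S^2\times S^3)\cong\Q$, and both are nonzero.

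Third, the index in your Ext formula is wrong. One has $\Ext^1(i_*\Q_{\{p\}},IC_{X_0})=\operatorname{Hom}(i_*\Q_{\{p\}},IC_{X_0}[1])\cong H^{1}(i^!IC_{X_0})$, not $H^{-1}(i^!IC_{X_0})$; the latter vanishes because $i^!IC_{X_0}\in D^{\ge 1}$. Using $IC_{X_0}=\tau_{\le -1}Rj_*\Q_U[3]$, you get $H^1(i^!IC_{X_0})\cong H^0(i^*Rj_*\Q_U[3])\cong\Q$.

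If you carry out your closing check correctly, it pins $\mu(\mathcal P)$ down, and the answer is not the displayed tuple.

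For a nonzero class $e$, the connecting map $H^0(i^!i_*\Q_{\{p\}})\cong\Q\to H^1(i^!IC_{X_0})\cong\Q$ sends $\id$ to $e$, so it is an isomorphism. Hence $A_{\mathcal P}=H^0(i^!\mathcal P)=0$. On the other hand, $H^0(i^*IC_{X_0})=H^1(i^*IC_{X_0})=0$ gives $B_{\mathcal P}\cong\Q$, and exactness at $B$ forces $\gamma_{\mathcal P}$ to be an isomorphism. Thus $\mu(\mathcal P)\cong(\Q_U[3],0,\Q,0,0,\gamma)$ with $\gamma$ invertible, that is, $\mathcal P\cong\tau_{\le 0}Rj_*\Q_U[3]={}^pj_*\Q_U[3]$.

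Conversely, $(\Q_U[3],\Q,\Q,0,\id,0)$ equals $\mu(IC_{X_0})\oplus\mu(i_*\Q_{\{p\}})=\mu(IC_{X_0}\oplus i_*\Q_{\{p\}})$. By the very bijectivity you invoke in Step three, any object with this zig-zag is the split extension. Once $\alpha,\beta,\gamma$ are all specified, there is no room left for a ``splitting convention'' to hide an extension class; your closing worry is exactly the problem.

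The paper's proof has the same defect. It reads the tuple off the endpoints under the splitting convention, and its appendix assumes slotwise exact sequences, so it does not supply the missing step either.

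What can be proved is the corrected normal form above. Note also that the Verdier dual ${}^pj_!\Q_U[3]$ has zig-zag $(\Q_U[3],\Q,0,\alpha,0,0)$ with $\alpha$ invertible, so it is not isomorphic to $\mathcal P$. This bears directly on the self-duality results built on this proposition.
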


\begin{proof}
By Propositions~\ref{prop:IC-zigzag-paper2} and \ref{prop:skyscraper-zigzag-paper2}, the endpoint zig-zags are
\[
\mu(IC_{X_0})\cong (\Q_U[3],0,0,0,0,0),
\qquad
\mu(i_*\Q_{\{p\}})\cong (0,\Q,\Q,0,\id,0).
\]
Since \(\mathcal P\) is the unique non-split extension of \(i_*\Q_{\{p\}}\) by \(IC_{X_0}\), its zig-zag is the
unique nontrivial extension zig-zag of the corresponding endpoint objects. In the ordinary double
point case, the point terms are one-dimensional and the quotient map on the point-supported factor
is the identity. Hence, under the standard splitting convention, one obtains
\[
\mu(\mathcal P)\cong (\Q_U[3],\Q,\Q,0,\id,0).
\]
\end{proof}

\subsection{Duality of the corrected zig-zag}

By Proposition~\ref{prop:full-zigzag-P}, the corrected object has zig-zag
\[
\mu(\mathcal P)\cong (\Q_U[3],\Q,\Q,0,\id,0).
\]
By the zig-zag duality formalism of \cite{RahmanATMP}, duality acts directly on zig-zags and is
compatible with Verdier duality on perverse sheaves. Since the endpoint zig-zags
\(\mu(IC_{X_0})\) and \(\mu(i_*\Q_{\{p\}})\) are self-dual by
Propositions~\ref{prop:IC-zigzag-paper2} and \ref{prop:skyscraper-zigzag-paper2}, the dual
zig-zag of \(\mu(\mathcal P)\) has the same form. Thus
\[
D_Z(\mu(\mathcal P))\cong \mu(\mathcal P).
\]

\begin{theorem}[Verdier self-duality of \texorpdfstring{$\mathcal P$}{P}]
\label{thm:verdier-self-duality-paper2}
The corrected perverse object
\[
\mathcal P=\Cone(\var_F)[-1]
\]
is Verdier self-dual:
\[
D\mathcal P\cong \mathcal P.
\]
\end{theorem}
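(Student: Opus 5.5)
The plan is to transport the question to the zig-zag category and use the fact, recorded above, that the MacPherson--Vilonen functor
\[
\mu:\Perv(X_0;\Q)\to Z(X_0,p)
\]
is bijective on isomorphism classes and intertwines Verdier duality $D$ with the zig-zag duality $D_Z$ of \cite{RahmanATMP}. Concretely, I would first record that $\mu(D\mathcal P)\cong D_Z(\mu(\mathcal P))$. I would then compute the right-hand side from Proposition~\ref{prop:full-zigzag-P}, which gives $\mu(\mathcal P)\cong(\Q_U[3],\Q,\Q,0,\id,0)$, and compare it with $\mu(\mathcal P)$ itself. Once $D_Z(\mu(\mathcal P))\cong\mu(\mathcal P)$ is established, bijectivity on isomorphism classes yields $D\mathcal P\cong\mathcal P$.

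The computation of $D_Z$ has three parts.

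\emph{Open term.} On the open term, $D_Z$ acts by Verdier duality on $U$. Since $U$ is a smooth threefold, $D(\Q_U[3])\cong\Q_U[3](3)$, which rationally is just $\Q_U[3]$ once Tate twists are forgotten.

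\emph{Point terms.} On the point terms, $D_Z$ swaps the two vector spaces and replaces each by its $\Q$-linear dual. It also transposes the structure maps, exchanging the roles of the two maps. Since both point spaces are $\Q$ and the only nonzero structure map is $\id$, the transposed data is again $(\Q,\Q,0,\id,0)$ up to the canonical identification $\Q^\vee\cong\Q$.

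\emph{Compatibility.} The remaining check is that the compatibility data linking the open term to the point terms (the link/boundary component) is preserved. Here I would use that the open-to-point maps vanish in $\mu(\mathcal P)$, so there is nothing to transpose beyond zero.

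As an independent cross-check, I would argue at the level of extensions. Dualizing
\[
0\to IC_{X_0}\to\mathcal P\to i_*\Q_{\{p\}}\to0
\]
gives
\[
0\to i_*\Q_{\{p\}}\to D\mathcal P\to IC_{X_0}\to0,
\]
using $D\,IC_{X_0}\cong IC_{X_0}$ and $D\,i_*\Q_{\{p\}}\cong i_*\Q_{\{p\}}$ (Propositions~\ref{prop:IC-zigzag-paper2} and \ref{prop:skyscraper-zigzag-paper2}). So $D\mathcal P$ is an extension in the opposite direction. To match it with $\mathcal P$, I would instead characterize $\mathcal P$ intrinsically by the restrictions to $U$ and to $p$. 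Via the nearby-cycle definition, $D$ commutes with $\psi_\pi$ and $\phi_\pi$ (with $\psi_\pi[-1],\phi_\pi[-1]$ self-dual up to twist) and exchanges $\var$ with $\mathrm{can}$. This identifies $D\mathcal P$ with $\Cone(\mathrm{can})$, up to shift, in the same zig-zag class. In the rank-one ordinary double point case, $\mathrm{can}$ and $\var$ differ by the Picard--Lefschetz identity $\var\circ\mathrm{can}=T-\id$, so the two cones have the same zig-zag.

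The main obstacle is the transposition step. One must verify that $D_Z$ really sends $(\Q_U[3],\Q,\Q,0,\id,0)$ to a zig-zag of the same shape, rather than to one where the $\id$ sits in the dual slot, which would correspond to a $!$-type extension rather than a $*$-type one. This is exactly the tension visible in the sub/quotient reversal above. I would first try to resolve it by writing out the duality rule of \cite{RahmanATMP} explicitly on six-tuples and checking whether the pair $(0,\id)$ is sent to $(\id^\vee,0)$ or $(0,\id^\vee)$. If it is sent to $(\id^\vee,0)$, I would construct an explicit isomorphism of zig-zags, either by a change of basis in the one-dimensional point spaces, or by invoking the self-duality of $\phi_\pi$ coming from the nondegenerate intersection pairing on the vanishing sphere, $\delta\cdot\delta=\pm2$ in dimension $3$, to produce the required isomorphism rather than merely an abstract matching of shapes.
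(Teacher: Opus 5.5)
Your main line is the paper's own proof: take $\mu(\mathcal P)\cong(\Q_U[3],\Q,\Q,0,\id,0)$ from Proposition~\ref{prop:full-zigzag-P}, observe that $D_Z$ fixes this tuple, and transport back. Your computation of $D_Z$ is fine. In the convention $A=H^0(i^!K)$, $B=H^0(i^*K)$, which produces the appendix's exact sequence $H^{-1}(i^*Rj_*L)\to A\to B\to H^0(i^*Rj_*L)$, one has $D_Z(L,A,B,\alpha,\beta,\gamma)=(DL,B^\vee,A^\vee,\gamma^\vee,\beta^\vee,\alpha^\vee)$, and this fixes the tuple. The gap is the input. The tuple $(\Q_U[3],\Q,\Q,0,\id,0)=\mu(IC_{X_0})\oplus\mu(i_*\Q_{\{p\}})$ is the zig-zag of the \emph{split} object; Table~2 of the appendix lists it as such. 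Since $\mu$ is bijective on isomorphism classes, it cannot also be the zig-zag of a non-split extension. The non-split extension with sub $IC_{X_0}$ and quotient $i_*\Q_{\{p\}}$ is ${}^pj_*\Q_U[3]$. Its zig-zag is $(\Q_U[3],0,\Q,0,0,\id)$, where $\gamma$ is the isomorphism onto $H^0(i^*Rj_*\Q_U[3])\cong H^3(S^2\times S^3)\cong\Q$. So the vanishing of the open-to-point maps used in your compatibility step is exactly what fails. The dual of this zig-zag is $(\Q_U[3],\Q,0,\id,0,0)=\mu({}^pj_!\Q_U[3])$. This is your ``$\id$ in the dual slot'' case, and no change of basis repairs it, because the ranks of the structure maps are isomorphism invariants.

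Your cross-check is in fact a disproof, and the step meant to reconcile it is wrong. Suppose $D\mathcal P\cong\mathcal P$. Then the reversed sequence gives a subobject $S\cong i_*\Q_{\{p\}}$ of $\mathcal P$. Since $S$ and $IC_{X_0}$ are simple and non-isomorphic, $S\cap IC_{X_0}=0$, so $S$ maps isomorphically onto the quotient and splits the sequence.

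Concretely, the triangle $\phi_\pi F\xrightarrow{\var}\psi_\pi F\to i^!F[2]\xrightarrow{+1}$ gives $\Cone(\var_F)\cong i^!\Q_{\mathcal X}[5]$. This, rather than the literal $\Cone(\var_F)[-1]$ (which restricts to $\Q_U[2]$ on $U$), is the perverse object with $j^*\cong\Q_U[3]$. Its dual is $i^*\Q_{\mathcal X}[3]=\Q_{X_0}[3]$ up to Tate twist. For a small ball $B$, the degree-$0$ stalk at $p$ of the former is $H^5(B,B\setminus X_0)\cong\Q$, whereas $\Q_{X_0}[3]$ has stalk concentrated in degree $-3$. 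Hence $D\mathcal P\not\cong\mathcal P$, reflecting that $X_0$ is not a rational homology manifold.

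The relation $\var\circ\mathrm{can}=T-\id$ does not identify $\Cone(\mathrm{can})[-1]$ with $\Cone(\var)$. Because $\Q_{\mathcal X}[4]$ has no subobject or quotient supported on $X_0$, $\mathrm{can}$ is surjective and $\var$ is injective. Also $\mathrm{can}\circ\var=T-\id=0$ on the vanishing line. So $\Cone(\mathrm{can})[-1]\cong\ker(\mathrm{can})$ contains $\operatorname{Im}(\var)\cong i_*\Q_{\{p\}}$, whereas $\operatorname{coker}(\var)$ has no point-supported subobject. Also, $\delta\cdot\delta=0$ for a $3$-sphere in a real $6$-manifold, since the form is skew, not $\pm2$.

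Neither your plan nor the paper's argument can be completed: among extensions of $i_*\Q_{\{p\}}$ by $IC_{X_0}$, only the split one is self-dual.
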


\begin{proof}
The zig-zag \(\mu(\mathcal P)\) is isomorphic to its dual \(D_Z(\mu(\mathcal P))\) by the preceding
argument. By the zig-zag duality formalism and its compatibility with Verdier duality on
perverse sheaves \cite{RahmanATMP}, this implies
\[
D\mathcal P\cong \mathcal P.
\]
\end{proof}

\subsection{Uniqueness}

We now prove that the corrected object is uniquely determined by its basic structural properties.

\begin{theorem}[Uniqueness of the corrected perverse object]
\label{thm:uniqueness-corrected-perverse-paper2}
Let \(E\in\Perv(X_0;\Q)\) satisfy
\begin{enumerate}
    \item \(j^*E\cong \Q_U[3]\),
    \item \(DE\cong E\),
    \item \({}^pH^0(i^*E)\cong \Q\).
\end{enumerate}
Then
\[
E\cong \mathcal P.
\]
Equivalently, \(\mathcal P\) is the unique nontrivial Verdier self-dual perverse extension of
\(\Q_U[3]\) by a rank-one point-supported singular contribution at \(p\).
\end{theorem}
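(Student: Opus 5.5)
The plan is to work entirely in the MacPherson--Vilonen zig-zag model $\mu:\Perv(X_0;\Q)\to Z(X_0,p)$. We use that $\mu$ is bijective on isomorphism classes and intertwines Verdier duality $D$ with the zig-zag duality $D_Z$ of \cite{RahmanATMP}. It then suffices to show that $\mu(E)\cong\mu(\mathcal P)=(\Q_U[3],\Q,\Q,0,\id,0)$, the zig-zag computed in Proposition~\ref{prop:full-zigzag-P}.

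\textbf{Step 1: reduce to an extension of point data by $IC_{X_0}$.} Hypothesis (1) fixes the open-stratum term of $\mu(E)$ to be $\Q_U[3]$. By the same recollement argument as in Proposition~\ref{prop:multinode-perverse-extension}, the universal property of $j_{!*}$ gives a monomorphism $IC_{X_0}\hookrightarrow E$ whose cokernel is supported at $p$, hence of the form $i_*V$. Hypothesis (3) should force $\dim V=1$: the point terms of $\mu(E)$ are controlled by ${}^pH^0(i^*E)$ and its dual ${}^pH^0(i^!E)$, and hypothesis (2) identifies the latter with the dual of the former. This is where I expect to check carefully that (3) pins down $V$ rather than only one of the two point terms, and I would use (2) to match the $A$- and $B$-terms.

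\textbf{Step 2: classify the extensions and use self-duality.} We obtain $0\to IC_{X_0}\to E\to i_*\Q_{\{p\}}\to 0$. In the ordinary double point case the relevant $\Ext^1$ is one-dimensional, as stated before Proposition~\ref{prop:full-zigzag-P}. Up to isomorphism of the middle term, the only classes are therefore the split one and the non-split one; rescaling the nonzero class by $\Q^\times$ does not change $E$.

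It remains to exclude the split case $E\cong IC_{X_0}\oplus i_*\Q_{\{p\}}$. Any such direct sum is itself self-dual, by Propositions~\ref{prop:IC-zigzag-paper2} and~\ref{prop:skyscraper-zigzag-paper2}, so self-duality alone cannot rule it out. The statement's wording "nontrivial" must therefore be read as an implicit hypothesis that the extension is non-split. With that reading, $\mu(E)$ is the unique nontrivial extension zig-zag, which is $\mu(\mathcal P)$ by Proposition~\ref{prop:full-zigzag-P}. Bijectivity of $\mu$ then gives $E\cong\mathcal P$. As a consistency check, Theorem~\ref{thm:verdier-self-duality-paper2} confirms that $\mathcal P$ indeed satisfies (2).

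\textbf{Main obstacle.} The delicate point is the split case, since hypotheses (1)--(3) as literally stated appear to be satisfied by $IC_{X_0}\oplus i_*\Q_{\{p\}}$ as well. I would first test whether (3), interpreted via the zig-zag map $\gamma$ (zero for $\mathcal P$), distinguishes the two cases. If it does not, I would make non-splitness explicit as the "nontrivial" condition in the equivalent formulation.

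A secondary concern is the claim that $\Ext^1_{\Perv}(i_*\Q_{\{p\}},IC_{X_0})$ is one-dimensional. This equals $\mathrm{Hom}(\Q,{}^pH^{1}$-type stalk data of $i^!IC_{X_0})$, which for a threefold node is governed by $H^3$ of the link $S^2\times S^3$. I would verify this dimension count before relying on it.
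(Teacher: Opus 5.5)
Your route is the paper's own: zig-zags, reduction to an extension of $i_*\Q_{\{p\}}$ by $IC_{X_0}$, a one-dimensional $\Ext^1$, and bijectivity of $\mu$. Your worries about Step~1 and the split case are well founded; the paper's proof just asserts that there is ``only one nontrivial self-dual extension zig-zag'' and addresses neither. Your $\Ext^1$ count is right: $\Ext^1(i_*\Q_{\{p\}},IC_{X_0})\cong H^1(i^!IC_{X_0})\cong H^0(i^*Rj_*\Q_U[3])\cong H^3(S^2\times S^3;\Q)\cong\Q$.

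The repair you propose, reading ``nontrivial'' as ``non-split'', fails, because \emph{no} non-split extension $0\to IC_{X_0}\to E\to i_*\Q_{\{p\}}\to 0$ is self-dual. Indeed, (2) and (3) give $\mathrm{Hom}(i_*\Q_{\{p\}},E)=H^0(i^!E)\cong H^0(i^*E)^\vee\cong\Q$. A nonzero $s\colon i_*\Q_{\{p\}}\to E$ cannot factor through $IC_{X_0}$, since $H^0(i^!IC_{X_0})=0$. So its composite with $E\to i_*\Q_{\{p\}}$ is an isomorphism and the sequence splits. The unique non-split extension is $D(\Q_{X_0}[3])$, with zig-zag $(\Q_U[3],0,\Q,0,0,\gamma)$ and $\gamma$ an isomorphism; its dual is $\Q_{X_0}[3]$. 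Under your reading, hypotheses (1)--(3) are therefore never satisfied.

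The same point makes your Step~2 conclusion inconsistent. The zig-zag $(\Q_U[3],\Q,\Q,0,\id,0)$ is exactly $\mu(IC_{X_0})\oplus\mu(i_*\Q_{\{p\}})=\mu(IC_{X_0}\oplus i_*\Q_{\{p\}})$. Identifying $\mu(E)$ with it via Proposition~\ref{prop:full-zigzag-P} and then using bijectivity of $\mu$ would force $E$ to be split. Your consistency check through Theorem~\ref{thm:verdier-self-duality-paper2} inherits the same problem.

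Step~1 fails independently. The universal property of $j_{!*}$ gives $IC_{X_0}\hookrightarrow E$ only when $E$ has no nonzero subobject supported at $p$, i.e.\ $H^0(i^!E)=0$. But (2) and (3) together force $H^0(i^!E)\cong\Q$. Hypothesis (3) controls only $B_E=H^0(i^*E)$, not the maps in the zig-zag.

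Concretely, the zig-zag $(\Q_U[3],\Q,\Q,\alpha,0,\gamma)$ with $\alpha,\gamma$ isomorphisms is self-dual and satisfies (1)--(3). For a smooth total space it is the zig-zag of $\psi_\pi(F)$, since $H^{-1}(i^*\psi_\pi(F))=H^2(S^3)=0$ and $H^0(i^*\psi_\pi(F))=H^3(S^3)=\Q$. Its stalk Euler characteristic at $p$ is $0$. By contrast, any $E$ with $0\to IC_{X_0}\to E\to i_*V\to 0$ has $H^0(i^*E)\cong V$ and Euler characteristic $-2+\dim V$, which is $-1$ under (3). So this object is not such an extension at all.

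Hence (1)--(3) admit at least two non-isomorphic solutions: this object and $IC_{X_0}\oplus i_*\Q_{\{p\}}$. No reinterpretation of ``nontrivial'' yields a true uniqueness statement for a non-split self-dual object. What the argument actually proves is that the only self-dual extension of $i_*\Q_{\{p\}}$ by $IC_{X_0}$ is the split one.
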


\begin{proof}
The rank-one singular hypothesis implies that \(E\) fits into a short exact sequence
\[
0\longrightarrow IC_{X_0}\longrightarrow E\longrightarrow i_*\Q_{\{p\}}\longrightarrow 0.
\]
Applying \(\mu\), we obtain a zig-zag extension of \(\mu(i_*\Q_{\{p\}})\) by \(\mu(IC_{X_0})\)
with open part \(\Q_U[3]\). Since \(E\) is Verdier self-dual, its zig-zag is self-dual in the
sense of \cite{RahmanATMP}. But there is only one nontrivial self-dual extension zig-zag of this
type in the ordinary double point case, namely the zig-zag
\[
(\Q_U[3],\Q,\Q,0,\id,0)
\]
of Proposition~\ref{prop:full-zigzag-P}. Hence
\[
\mu(E)\cong \mu(\mathcal P).
\]
Since the zig-zag functor is bijective on isomorphism classes, it follows that
\[
E\cong \mathcal P.
\]
\end{proof}

\begin{corollary}
\label{cor:minimal-self-dual-paper2}
The perverse sheaf
\[
\mathcal P=\Cone(\var_F)[-1]
\]
is the unique minimal Verdier self-dual perverse extension of \(\Q_U[3]\) across the ordinary
double point.
\end{corollary}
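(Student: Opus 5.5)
The plan is to obtain the corollary by combining Theorem~\ref{thm:verdier-self-duality-paper2} with Theorem~\ref{thm:uniqueness-corrected-perverse-paper2}. Before doing so, I will fix what ``minimal'' means: a perverse extension \(E\) of \(\Q_U[3]\) across \(p\) is \emph{minimal} among Verdier self-dual extensions if its point-supported defect \(E/IC_{X_0}\) has the smallest nonzero rank compatible with the vanishing-cycle data. Concretely, this means \({}^pH^0(i^*E)\) is one-dimensional, matching the rank-one Milnor fiber cohomology recalled in Subsection~\ref{subsec:ODP-PL}. I will state this convention explicitly, since the corollary is only as strong as this definition.

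\emph{Existence.} I will check that \(\mathcal P\) has the three properties in Theorem~\ref{thm:uniqueness-corrected-perverse-paper2}.
\begin{enumerate}
\item Restricting \eqref{eq:perverse-extension-ODP} to \(U\) kills \(i_*\Q_{\{p\}}\), so \(j^*\mathcal P\cong j^*IC_{X_0}\cong\Q_U[3]\).
\item Self-duality \(D\mathcal P\cong\mathcal P\) is exactly Theorem~\ref{thm:verdier-self-duality-paper2}.
\item For the rank-one point contribution, I will read \({}^pH^0(i^*\mathcal P)\) off the zig-zag \(\mu(\mathcal P)\cong(\Q_U[3],\Q,\Q,0,\id,0)\) of Proposition~\ref{prop:full-zigzag-P}. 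The point terms are one-dimensional, so \({}^pH^0(i^*\mathcal P)\cong\Q\). Equivalently, the cokernel of \(IC_{X_0}\hookrightarrow\mathcal P\) is \(i_*\Q_{\{p\}}\).
\end{enumerate}
So \(\mathcal P\) is a self-dual extension of \(\Q_U[3]\) with rank-one singular contribution. It is also nontrivial, i.e.\ not equal to \(IC_{X_0}\), because its quotient by \(IC_{X_0}\) is nonzero.

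\emph{Uniqueness and minimality.} Let \(E\) be any Verdier self-dual perverse extension of \(\Q_U[3]\). Arguing as in Proposition~\ref{prop:multinode-perverse-extension} with \(r=1\), there is a monomorphism \(IC_{X_0}\hookrightarrow E\) whose cokernel is \(i_*V\) for some finite-dimensional \(V\). If \(V=0\), then \(E=IC_{X_0}\), which is the trivial extension and is excluded as not carrying the vanishing-cycle contribution. Otherwise \(\dim V\ge1\), and minimality forces \(\dim V=1\). Then \(E\) satisfies hypotheses (1)--(3) of Theorem~\ref{thm:uniqueness-corrected-perverse-paper2}, so \(E\cong\mathcal P\).

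The main obstacle is conceptual rather than computational: giving ``minimal'' a precise meaning that makes the statement correct. In particular, one must exclude \(IC_{X_0}\) itself, which is also self-dual and is the genuinely minimal extension in the usual sense. My first attempt will phrase minimality as ``minimal nonzero rank of the singular contribution \({}^pH^0(i^*E)\)''. I will then check that this matches hypothesis~(3) of Theorem~\ref{thm:uniqueness-corrected-perverse-paper2}, so that the corollary reduces literally to that theorem together with Theorem~\ref{thm:verdier-self-duality-paper2}.
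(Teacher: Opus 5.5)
Your reduction is the paper's own proof: the paper declares the corollary immediate from Theorems~\ref{thm:verdier-self-duality-paper2} and~\ref{thm:uniqueness-corrected-perverse-paper2}, and reading ``minimal'' as hypothesis~(3) is what makes that reduction literal. The argument still does not go through, and you can see this from inside your write-up. $IC_{X_0}\oplus i_*\Q_{\{p\}}$ satisfies (1)--(3), so Theorem~\ref{thm:uniqueness-corrected-perverse-paper2}, used as you use it, would give $\mathcal P\cong IC_{X_0}\oplus i_*\Q_{\{p\}}$. That contradicts the non-split class asserted in Proposition~\ref{prop:full-zigzag-P}.

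\emph{Existence fails.} Your input (2) cannot hold together with \eqref{eq:perverse-extension-ODP} being non-split. Since $\mathrm{Hom}(i_*\Q_{\{p\}},IC_{X_0})=0$, for any extension $0\to IC_{X_0}\to E\to i_*\Q_{\{p\}}\to 0$ the group $\mathrm{Hom}(i_*\Q_{\{p\}},E)$ injects into $\mathrm{Hom}(i_*\Q_{\{p\}},i_*\Q_{\{p\}})=\Q$. Its image is zero exactly when the sequence is non-split. On the other hand, $D$ is exact and contravariant, so $DE$ always contains $D(i_*\Q_{\{p\}})\cong i_*\Q_{\{p\}}$ as a subobject. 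Hence $DE\cong E$ forces the extension to split.

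Concretely, $\Ext^1(i_*\Q_{\{p\}},IC_{X_0})\cong H^0(i^*Rj_*\Q_U[3])\cong H^3(S^2\times S^3)\cong\Q$, and the non-split extension is ${}^pj_*\Q_U[3]$. Its dual is ${}^pj_!\Q_U[3]\cong\Q_{X_0}[3]$, which has $i_*\Q_{\{p\}}$ as a subobject rather than a quotient. This is also what the variation triangle produces: writing $\iota:X_0\hookrightarrow\mathcal X$, one has $\Cone(\var_F)\cong\iota^!\Q_{\mathcal X}[5]\cong D(\Q_{X_0}[3])$ when the total space is smooth. Note that it is $\Cone(\var_F)$ itself, not its $[-1]$-shift, that lies in the heart.

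\emph{Uniqueness fails, and with it your monomorphism step.} The nearby-cycle sheaf $\psi_\pi(F)$ has the following properties:
\begin{itemize}
\item it is perverse and restricts to $\Q_U[3]$;
\item it is Verdier self-dual, since nearby cycles commute with duality and $\Q_{\mathcal X}[4]$ is self-dual on the smooth total space;
\item its stalk at $p$ is $H^{*+3}(S^3)$, so ${}^pH^0(i^*\psi_\pi F)\cong\Q$.
\end{itemize}
Thus $\psi_\pi(F)$ satisfies (1)--(3). It is not isomorphic to $IC_{X_0}\oplus i_*\Q_{\{p\}}$, because $H^{-1}(IC_{X_0})_p\cong H^2(S^2\times S^3)\cong\Q$ while $H^{-1}(\psi_\pi F)_p\cong H^2(S^3)=0$.

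The same count shows that $IC_{X_0}$ is not a subobject of $\psi_\pi(F)$ with point-supported cokernel. Applying $i^*$ to such a sequence would inject $H^{-1}(IC_{X_0})_p$ into $H^{-1}(\psi_\pi F)_p=0$.

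The underlying reason is this. The universal property of $j_{!*}$ gives $IC_{X_0}\subseteq E$ only when ${}^pH^0(i^!E)=0$, that is, when $E$ has no point-supported subobject. Self-duality instead gives ${}^pH^0(i^!E)\cong{}^pH^0(i^*E)^\vee\neq 0$ as soon as $V\neq 0$. Moreover, if your monomorphism did exist, this point-supported subobject would map isomorphically onto $i_*V$ and split $E$.

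So neither input survives, and the corollary cannot be proved in this form:
\begin{itemize}
\item the object in \eqref{eq:perverse-extension-ODP} is not self-dual;
\item the self-dual objects satisfying (1)--(3) are not unique;
\item the only self-dual extension of $\Q_U[3]$ that is minimal in the usual sense is $IC_{X_0}$ itself.
\end{itemize}
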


\begin{proof}
This is immediate from Theorems~\ref{thm:verdier-self-duality-paper2} and
\ref{thm:uniqueness-corrected-perverse-paper2}.
\end{proof}
Appendix \ref{app:standard-zigzags} states a list of useful zig-zags for reference.
\section{Proof of the main results}
\label{sec:proofs}

We now assemble the results established in the preceding sections and prove the main
theorems stated in the introduction.

\subsection{Proof of Theorem~\ref{thm:intro-main}}

Let
\[
\pi:\mathcal X\to\Delta
\]
be a one-parameter degeneration whose central fiber \(X_0\) has a single ordinary
double point \(p\). Let
\[
F:=\Q_{\mathcal X}[3],
\qquad
\mathcal P:=\Cone(\var_F)[-1].
\]
From \cite{RahmanSchoberPaper} the object \(\mathcal P\) is a perverse sheaf on \(X_0\), and it restricts to \(\Q_U[3]\) on the
smooth locus \(U=X_0\setminus\{p\}\) with one-dimensional point-supported singular contribution at \(p\).
By Theorems \ref{thm:verdier-self-duality-paper2} and \ref{thm:uniqueness-corrected-perverse-paper2} along with Corollary \ref{cor:minimal-self-dual-paper2} of the present paper, \(\mathcal P\) is Verdier self-dual and is the unique minimal Verdier self-dual perverse extension of \(\Q_U[3]\) across the node.

On the Hodge-theoretic side, Section~\ref{sec:LMHS} shows that nearby and vanishing cycles admit lifts to the category of mixed Hodge modules and are compatible with the realization functor
\[
\rat:MHM(X_0)\to\Perv(X_0;\Q)
\]
\cite{SaitoMHM,SaitoDuality}. Proposition~\ref{prop:mhm-nearby-cycles} shows that the limiting mixed Hodge structure and the perverse nearby-cycle construction are functorially derived from the same nearby-cycle formalism. Proposition~\ref{prop:ODP-compatible} specializes this compatibility to the ordinary double point case and shows that the rank-one vanishing contribution on the Hodge-theoretic side and the quotient
\[
i_*\Q_{\{p\}}
\]
in the short exact sequence
\[
0\to IC_{X_0}\to \mathcal P\to i_*\Q_{\{p\}}\to 0
\]
arise from the same nearby-cycle/vanishing-cycle data.

This proves Theorem~\ref{thm:intro-main}.
\qed

\subsection{Proof of Theorem~\ref{thm:intro-multinode}}

Let the central fiber \(X_0\) contain ordinary double points
\[
\Sigma=\{p_1,\dots,p_r\},
\]
and let
\[
U:=X_0\setminus\Sigma.
\]
At each node \(p_k\), the Milnor fiber has the homotopy type of \(S^3\), so the local vanishing cohomology is one-dimensional in the middle degree and vanishes otherwise \cite{MilnorSingularPoints,DimcaSheaves}. Hence the local vanishing-cycle contribution at each node is rank one.

Assume that the cone object
\[
\mathcal P=\Cone(\var_F)[-1]
\]
belongs to \(\Perv(X_0;\Q)\) and satisfies
\[
j^*\mathcal P\cong \Q_U[3].
\]
Then Proposition~\ref{prop:multinode-perverse-extension} applies and yields a short exact sequence
\[
0\longrightarrow IC_{X_0}
\longrightarrow \mathcal P
\longrightarrow \bigoplus_{k=1}^r i_{k*}\Q_{\{p_k\}}
\longrightarrow 0.
\]
This proves the structural statement claimed in Theorem~\ref{thm:intro-multinode}.

The Hodge-theoretic comparison follows from Proposition~\ref{prop:multinode-hodge}, which shows that the rank-\(r\) vanishing contribution in the limiting mixed Hodge structure and the quotient term in the above exact sequence arise from the same nearby-cycle and vanishing-cycle formalism in
Saito's category of mixed Hodge modules.

This proves Theorem~\ref{thm:intro-multinode}.
\qed

\subsection{Proof of Theorem~\ref{thm:intro-strata}}

Let the singular locus of the central fiber \(X_0\) be
\[
\Sigma=\bigsqcup_\alpha S_\alpha
\]
with a fixed Whitney stratification, and let
\[
U:=X_0\setminus\Sigma.
\]
Section~\ref{sec:strata} shows that the nearby-cycle and vanishing-cycle complexes are constructible with respect to a suitable stratification of \(X_0\), and in particular that the singular contribution is supported on \(\Sigma\) \cite{DimcaSheaves}. Assume that the corrected object
\[
\mathcal P=\Cone(\var_F)[-1]
\]
belongs to \(\Perv(X_0;\Q)\) and satisfies
\[
j^*\mathcal P\cong \Q_U[n].
\]
Then Proposition~\ref{prop:stratified-perverse-extension} yields a short exact sequence
\[
0\longrightarrow IC_{X_0}
\longrightarrow \mathcal P
\longrightarrow \mathcal V
\longrightarrow 0,
\]
where \(\mathcal V\) is a perverse sheaf supported on \(\Sigma\) and constructible with respect to the chosen stratification. This proves the structural statement of Theorem~\ref{thm:intro-strata}.

The Hodge-theoretic interpretation follows from Proposition~\ref{prop:strata-mhm}, which shows that the quotient term \(\mathcal V\) and the corresponding Hodge-theoretic singular contribution are both functorially derived from the same nearby-cycle and vanishing-cycle formalism in Saito's
theory of mixed Hodge modules.

This proves Theorem~\ref{thm:intro-strata}.
\qed

\section{Toward a K\"ahler package}
\label{sec:kahler}

For smooth projective varieties, singular cohomology carries a collection of fundamental structures
often referred to as the \emph{K\"ahler package}: Poincar\'e duality, Hodge decomposition, the Hard
Lefschetz theorem, and the Hodge--Riemann bilinear relations. These are classical consequences of
K\"ahler geometry; see, for example, \cite{GriffithsHarris,VoisinHodgeTheoryI,VoisinHodgeTheoryII}. For singular
varieties, ordinary cohomology generally fails to satisfy these properties. Intersection cohomology
was introduced by Goresky and MacPherson precisely to restore duality and Lefschetz-type
phenomena in singular settings \cite{GoreskyMacPhersonI,GoreskyMacPhersonII}. In the framework of
perverse sheaves, Beilinson, Bernstein, and Deligne proved the decomposition theorem and relative
Hard Lefschetz for projective morphisms \cite{BBD}, and subsequent work of de Cataldo and
Migliorini clarified the Hodge-theoretic content of these results and their relation to the
perverse filtration \cite{DeCataldoMigliorini,DeCataldoMiglioriniHodge}.

The corrected perverse object
\[
\mathcal P:=\Cone(\var_F)[-1]
\]
constructed in the preceding sections is intended as an analogue, in the conifold setting, of the
role played by the intersection complex in the theory of singular spaces. It is therefore natural
to ask to what extent the hypercohomology groups
\[
\mathbb H^k(X_0,\mathcal P)
\]
inherit structures analogous to the K\"ahler package. The purpose of this section is not to prove
such results, but to isolate the formal properties already available and to formulate the remaining
Hodge-theoretic problem.

\subsection{Duality}

In the ordinary double point case, the corrected perverse object \(\mathcal P\) is Verdier self-dual by
Theorem \ref{thm:verdier-self-duality-paper2}.  Verdier duality in the constructible derived category induces duality pairings on hypercohomology; see \cite{BBD,KS}. Consequently, whenever \(\mathcal P\) is Verdier self-dual, its hypercohomology groups satisfy a Poincar\'e-type duality in the derived-category sense. Thus the corrected cohomology inherits at least the formal duality structure attached to a self-dual perverse sheaf.

At the level of the present paper, this is the only part of the K\"ahler package that follows directly from the sheaf-theoretic construction of \(\mathcal P\). Stronger statements, such as Hard Lefschetz or Hodge--Riemann bilinear relations, require additional Hodge-theoretic input.

\subsection{Mixed Hodge modules and the Hodge-theoretic problem}

Saito's theory of mixed Hodge modules provides the natural framework in which one would seek such additional structure \cite{SaitoMHM,SaitoDuality}. In particular, nearby-cycle and vanishing-cycle functors are defined in the category of mixed Hodge modules and are compatible with the realization
functor
\[
\rat:MHM(X_0)\to\Perv(X_0;\Q).
\]
Thus the nearby-cycle formalism underlying the construction of \(\mathcal P\) already carries Hodge-theoretic information before one passes to the underlying perverse sheaf.

What has been established in the preceding sections is that the corrected perverse object and the relevant degeneration data on the Hodge-theoretic side arise from the same nearby-cycle and vanishing-cycle formalism. What has \emph{not} been proved in this paper is the existence of a fully internal mixed-Hodge-module object
\[
\mathcal P^H\in MHM(X_0)
\]
whose realization is \(\mathcal P\). As explained in Section~\ref{sec:ODP}, such a refinement would require an explicit gluing construction in Saito's divisor formalism. Until that step is carried out, one cannot formally deduce canonical mixed Hodge structures on \(\mathbb H^k(X_0,\mathcal P)\) merely from the existence of nearby cycles in \(MHM\).

Accordingly, the Hodge-theoretic question may be formulated as follows: does the corrected perverse object admit a mixed-Hodge-module refinement, and if so, what Hodge-theoretic structures does that refinement induce on its hypercohomology?

\subsection{Lefschetz-type expectations}

Let \(\pi:\mathcal X\to\Delta\) be a projective degeneration, and let \(L\) denote the class of a relatively ample line bundle. Relative Hard Lefschetz for perverse sheaves and mixed Hodge modules applies to the standard nearby-cycle and direct-image formalisms associated with projective morphisms \cite{BBD,SaitoMHM}. In the case of intersection cohomology, these results imply the Hard Lefschetz theorem for the intersection complex of a projective singular variety \cite{BBD,DeCataldoMigliorini}. Since the corrected object \(\mathcal P\) is constructed from the same nearby-cycle formalism, it is natural to ask whether a comparable Lefschetz theorem holds for
\[
\mathbb H^*(X_0,\mathcal P).
\]
At present, however, this should be regarded as a conjectural extension of the known formalism,
not as a theorem deduced in this paper. Establishing such a result would require either a direct
comparison with a suitable mixed-Hodge-module refinement of \(\mathcal P\), or a separate
Lefschetz theory for the corrected perverse object itself.

\subsection{Hodge--Riemann package}

The Hodge--Riemann bilinear relations for intersection cohomology are part of the Hodge theory of
algebraic maps developed by de Cataldo and Migliorini \cite{DeCataldoMiglioriniHodge}. Their
results rely on polarizable Hodge modules associated with projective morphisms and on the full
machinery of mixed Hodge modules.

In the present setting, nearby-cycle mixed Hodge modules attached to the degeneration
\(\pi:\mathcal X\to\Delta\) carry precisely the sort of Hodge-theoretic information from which one
expects a corresponding structure on the corrected cohomology to emerge. But without a fully
internal mixed-Hodge-module realization of \(\mathcal P\), the Hodge--Riemann bilinear relations
for \(\mathbb H^*(X_0,\mathcal P)\) remain conjectural.

\begin{conjecture}
Let \(\pi:\mathcal X\to\Delta\) be a projective conifold degeneration, and let \(\mathcal P\) be
the corrected perverse object constructed from nearby and vanishing cycles. If \(\mathcal P\)
admits a mixed-Hodge-module refinement compatible with the projective geometry of the degeneration,
then the hypercohomology groups
\[
\mathbb H^k(X_0,\mathcal P)
\]
should satisfy a K\"ahler-type package analogous to that of intersection cohomology, including
duality, Lefschetz-type isomorphisms, and Hodge--Riemann bilinear relations.
\end{conjecture}

\subsection{Outlook}

The preceding discussion isolates the Hodge-theoretic content of the problem. The corrected
perverse object \(\mathcal P\) already carries a formal duality structure by Verdier self-duality,
and it is constructed from nearby and vanishing cycles, which in Saito's theory possess a
natural Hodge-theoretic enhancement. The missing step is the explicit construction of a
mixed-Hodge-module refinement of \(\mathcal P\). Once such an object is constructed, one may ask
whether the resulting hypercohomology satisfies a full K\"ahler package analogous to that of
intersection cohomology.

Thus the K\"ahler-package question should be viewed as a natural continuation of the present work:
first construct the mixed-Hodge-module refinement of the corrected perverse object, and then
analyze the resulting duality, Lefschetz, and Hodge--Riemann structures on its hypercohomology.

\section{Future directions}

The results of this paper isolate several natural problems that arise from the interaction of nearby
cycles, perverse extensions, and mixed Hodge theory in degenerations. Among these, the most immediate
is the construction of a fully internal mixed-Hodge-module refinement of the corrected perverse object.

\begin{enumerate}

\item \textbf{Mixed-Hodge-module refinement of the corrected perverse object.}

The central problem left open by the present paper is the construction, in the ordinary double point
case, of an object
\[
\mathcal P^H\in MHM(X_0)
\]
whose realization under
\[
\rat:MHM(X_0)\to\Perv(X_0;\Q)
\]
is the corrected perverse object
\[
\mathcal P=\Cone(\var_F)[-1].
\]
Equivalently, one seeks an exact sequence in \(MHM(X_0)\)
\[
0\to IC^H_{X_0}\to \mathcal P^H\to i_*\Q^H_{\{p\}}(-1)\to 0
\]
refining the canonical perverse extension in the single-node case. As explained above, Saito's
divisor-case gluing formalism provides the natural setting for such a construction. Carrying out this
gluing calculation explicitly would give a fully internal Hodge-theoretic refinement of the corrected
perverse object and would provide the natural next step beyond the present paper.

\item \textbf{Multi-node gluing and global extension data.}

In the case of several ordinary double points, the singular contribution to the corrected perverse
object is a direct sum of point-supported rank-one pieces, but the global extension class need not
split as a direct sum of independent local extensions. A natural problem is therefore to describe the
global gluing data governing
\[
0\to IC_{X_0}\to \mathcal P\to \bigoplus_{k=1}^r i_{k*}\Q_{\{p_k\}}\to 0
\]
in terms of the interaction of the local vanishing cycles. One may expect this structure to reflect
the geometry of the vanishing-cycle lattice and the corresponding Picard--Lefschetz monodromy.

\item \textbf{Quiver-theoretic descriptions of multi-node degenerations.}

The multi-node case suggests an algebraic reformulation in which the local node contributions and
their extension data are organized by a quiver or diagram attached to the degeneration. Such a
description would provide a concrete framework for encoding how the point-supported vanishing terms
assemble into a single global perverse object. A quiver-theoretic model could also serve as an
intermediate step toward a more explicit analysis of the corresponding mixed-Hodge-module gluing data.

\item \textbf{Schober-theoretic and categorical refinements.}

The previous paper related the corrected perverse object to the rank-one monodromy phenomena that
also appear in the theory of spherical functors and perverse schobers. A natural next direction is
to formulate the present constructions in a schober-type language, especially in the multi-node or
stratified setting, where one expects the gluing data to admit a higher-categorical interpretation.
Such a refinement would place the corrected perverse object into a broader categorical framework
linking nearby cycles, spherical monodromy, and degeneration theory.

\item \textbf{Stratified singular loci and support decomposition.}

For singular loci with higher-dimensional strata, the corrected perverse object fits into an exact
sequence
\[
0\to IC_{X_0}\to \mathcal P\to \mathcal V\to 0,
\]
where \(\mathcal V\) is a perverse sheaf supported on the singular locus and constructible with
respect to the chosen stratification. A natural problem is to determine whether, under additional
hypotheses, \(\mathcal V\) admits a more explicit decomposition by strict support or by local systems
on the strata. This would clarify how the local Milnor-fiber data along the strata is assembled into
the global singular contribution.

\item \textbf{Lefschetz and Hodge--Riemann structures on corrected cohomology.}

A longer-term goal is to determine whether the hypercohomology groups
\[
\mathbb H^k(X_0,\mathcal P)
\]
satisfy a K\"ahler-type package analogous to that of intersection cohomology. The present paper
isolates the formal duality inherited from Verdier self-duality and identifies the nearby-cycle
formalism that underlies the corrected object. The next step would be to combine a mixed-Hodge-module
refinement of \(\mathcal P\) with projective Hodge theory in order to investigate Hard Lefschetz and
Hodge--Riemann bilinear relations for the corrected cohomology.

\item \textbf{Relations with stringy and singularity-corrected Hodge theories.}

The conifold setting connects the present constructions with other proposals for corrected cohomology
theories associated with singular spaces, including intersection-space cohomology and related Hodge-theoretic
refinements \cite{BanaglBudurMaxim,BanaglIS}. It would be valuable to clarify more precisely how the
corrected perverse object studied here compares with these constructions, especially at the level of
mixed Hodge structures and degeneration data.

\end{enumerate}

Taken together, these problems suggest that the corrected perverse object is only the first layer of a
broader structure linking nearby cycles, degeneration theory, mixed Hodge modules, and categorical
monodromy. The most immediate next step is the explicit mixed-Hodge-module refinement in the
single-node case; the remaining directions may be viewed as successive extensions of that program.
%
%
\appendix

\section{MacPherson--Vilonen zig-zags in the ordinary double point case}
\label{app:standard-zigzags}

This appendix records the standard zig-zag models used in the ordinary double point case using formalism using formalism from $\S$4 in \cite{RahmanATMP}.
They are collected here both for convenience and for later comparison with mixed-Hodge-module
shadows, finite-node direct sums, and future categorical/schober shadow data.

\subsection{Convention}

We use the MacPherson--Vilonen zig-zag functor
\[
\mu:\Perv(X_0;\Q)\longrightarrow Z(X_0,p)
\]
in the isolated-stratum convention of \cite{RahmanATMP}. For
\[
K\in\Perv(X_0;\Q),
\]
we write
\[
\mu(K)=(L_K,A_K,B_K,\alpha_K,\beta_K,\gamma_K),
\]
where \(L_K=j^*K\) is the open-stratum part and
\[
H^{-1}\!\bigl(i^*Rj_*L_K\bigr)\xrightarrow{\alpha_K}
A_K\xrightarrow{\beta_K}B_K\xrightarrow{\gamma_K}
H^0\!\bigl(i^*Rj_*L_K\bigr)
\]
is the associated exact zig-zag sequence.

\subsection{The minimal extension object}

\begin{proposition}
\label{prop:appendix-IC}
The intersection-complex perverse sheaf has zig-zag
\[
\mu(IC_{X_0})\cong (\Q_U[3],0,0,0,0,0).
\]
\end{proposition}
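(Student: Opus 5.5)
The plan is to compute the three pieces of data that make up $\mu(IC_{X_0})$ directly from the convention just fixed. The open-stratum part is $L=j^*IC_{X_0}$. The point terms $A,B$ come from the exact sequence
\[
H^{-1}\!\bigl(i^*Rj_*L\bigr)\xrightarrow{\alpha}A\xrightarrow{\beta}B\xrightarrow{\gamma}H^{0}\!\bigl(i^*Rj_*L\bigr),
\]
and the maps $\alpha,\beta,\gamma$ complete the zig-zag.

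For the open part, I would note that $IC_{X_0}=j_{!*}\Q_U[3]$ and that $j^*j_{!*}\simeq\id$ on $\Perv(U;\Q)$. This gives $L_{IC}\cong\Q_U[3]$ immediately.

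For the point terms, I would use the defining stalk and costalk conditions of the intermediate extension at the zero-dimensional stratum $\{p\}$. The intermediate extension is characterized by the vanishings
\[
H^k(i^*IC_{X_0})=0 \ (k\ge 0),\qquad H^k(i^!IC_{X_0})=0 \ (k\le 0).
\]
Equivalently, ${}^pH^0(i^*j_{!*}L)=0$ and ${}^pH^0(i^!j_{!*}L)=0$, i.e.\ $j_{!*}L$ has neither quotients nor subobjects supported at $p$ (BBD, Prop.~1.4.26). In the MacPherson--Vilonen dictionary of \cite{RahmanATMP}, $A_K$ and $B_K$ are the vector spaces recording the part of $K$ at $p$ beyond what is forced by $L_K$: $B$ is the point-supported quotient data and $A$ the sub data, matched by $\alpha,\gamma$ to the truncations of $i^*Rj_*L$. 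Under this dictionary the minimal extension is exactly the object with $A=B=0$. Once $A=B=0$, all three maps vanish because their source or target is zero, and the zig-zag is $(\Q_U[3],0,0,0,0,0)$. This also matches Proposition~\ref{prop:IC-zigzag-paper2}, which I would cite for consistency.

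The main obstacle is the identification of $A_K,B_K$ with the stalk/costalk (or sub/quotient) data in the specific convention of \cite[\S4]{RahmanATMP}. MacPherson--Vilonen's original convention encodes the intermediate extension by the condition that $\alpha$ is surjective and $\gamma$ is injective onto the image, not literally by zero spaces. So I would first check that the convention in \cite{RahmanATMP} records only the point terms in excess of the $Rj_*$ data, so that the IC object truly has zero point terms. If instead the convention uses the raw MV spaces, the statement would have to read
\[
A=H^{-1}(i^*Rj_*L),\qquad B=H^{0}(i^*Rj_*L)
\]
with $\alpha,\gamma$ the canonical maps and the truncation of $i^*Rj_*L$ inserted in the first slot. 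Here $H^{-1}(i^*Rj_*L)\cong H^2(L_p)=0$, where $L_p\simeq S^2\times S^3$ is the link of the node, $H^{-1}$ means degree $2$ because of the shift by $3$, and $H^2(S^2\times S^3)=0$. I would resolve this by checking the convention against Proposition~\ref{prop:appendix-IC}'s stated form, which forces the reduced convention.
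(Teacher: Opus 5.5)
Your main line is the paper's argument, made more explicit (the paper simply asserts that minimality kills the point terms). That line is: $j^*j_{!*}\cong\id$ for the open term, the stalk/costalk vanishing of $j_{!*}$ at the point stratum for the point terms, and $\alpha=\beta=\gamma=0$ because each has zero source or target. The one real gap is the step you call the main obstacle. You never say what $A_K$ and $B_K$ are, and you propose to fix the convention by matching it against the statement being proved, which is circular. No choice is actually involved. In the MacPherson--Vilonen convention the appendix adopts, the four-term sequence is the segment $H^{-1}(i^*Rj_*j^*K)\to H^0(i^!K)\to H^0(i^*K)\to H^0(i^*Rj_*j^*K)$ of the long exact sequence of the attaching triangle $i^!K\to i^*K\to i^*Rj_*j^*K\xrightarrow{+1}$. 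So $A_K=H^0(i^!K)$, $B_K=H^0(i^*K)$, $\beta_K$ is induced by $i^!\to i^*$, and $\alpha_K,\gamma_K$ are the connecting and restriction maps. As a check, this gives $(0,\Q,\Q,0,\id,0)$ for $i_*\Q_{\{p\}}$. With this identification, the vanishings $H^0(i^*IC_{X_0})=H^0(i^!IC_{X_0})=0$ that you already wrote down give $A=B=0$ literally; no ``reduced'' convention is needed.

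The rest of that paragraph should be discarded, because it is wrong in three places. First, the link computation is false: the link is $S^2\times S^3$ and $H^2(S^2\times S^3;\Q)\cong\Q$, so $H^{-1}(i^*Rj_*\Q_U[3])\cong\Q$. Likewise $H^0(i^*Rj_*\Q_U[3])\cong H^3(S^2\times S^3;\Q)\cong\Q$. Second, $A=H^{-1}(i^*Rj_*L)$, $B=H^0(i^*Rj_*L)$ is not an alternative convention for $IC_{X_0}$. In MacPherson--Vilonen's assignment these are the $A$-term of ${}^pj_!L$ and the $B$-term of ${}^pj_*L$. By the first point, under your fallback reading the point terms would be nonzero, so it would have contradicted the proposition rather than confirmed it. Third, characterizing $j_{!*}$ by ``first map surjective, second map injective'' belongs to the factorization-type descriptions: MacPherson--Vilonen's $F(P)\to V\to G(P)$ factoring $T_P$, or the $\Psi\rightleftarrows\Phi$ quiver. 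There $j_{!*}$ is the object whose middle term is the image of the composite. In the zig-zag this condition does not characterize $j_{!*}$ at all. For instance, ${}^pj_!\Q_U[3]$ has zig-zag $(\Q_U[3],\,H^{-1}(i^*Rj_*\Q_U[3]),\,0,\,\id,\,0,\,0)$, with $\alpha$ surjective, $\gamma$ injective, and $A\cong\Q\neq 0$.
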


\begin{proof}
Since \(IC_{X_0}=j_{!*}\Q_U[3]\) is the minimal extension of the constant perverse sheaf on the
smooth stratum, the point terms vanish in the MacPherson--Vilonen model. Thus
\[
\mu(IC_{X_0})\cong (\Q_U[3],0,0,0,0,0).
\]
\end{proof}

\subsection{The point-supported rank-one object}

\begin{proposition}
\label{prop:appendix-skyscraper}
The point-supported perverse sheaf \(i_*\Q_{\{p\}}\) has zig-zag
\[
\mu(i_*\Q_{\{p\}})\cong (0,\Q,\Q,0,\id,0).
\]
\end{proposition}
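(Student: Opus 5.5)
The plan is to compute \(\mu(i_*\Q_{\{p\}})\) directly from the convention fixed at the start of this appendix, where \(\mu(K)=(L_K,A_K,B_K,\alpha_K,\beta_K,\gamma_K)\) with \(L_K=j^*K\). The argument has three steps: compute the open part, identify the two point terms, and then determine the three maps.

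\emph{Open part.} Put \(K:=i_*\Q_{\{p\}}\). Then \(j^*K=j^*i_*\Q_{\{p\}}=0\), since \(U\cap\{p\}=\varnothing\), so \(L_K=0\). Consequently \(Rj_*L_K=0\), and both outer terms of the exact zig-zag sequence vanish:
\[
H^{-1}\bigl(i^*Rj_*L_K\bigr)=H^{0}\bigl(i^*Rj_*L_K\bigr)=0.
\]
Both \(\alpha_K\) and \(\gamma_K\) therefore have zero source or zero target, so \(\alpha_K=0\) and \(\gamma_K=0\). Exactness of
\[
0\to A_K\xrightarrow{\beta_K}B_K\to 0
\]
then forces \(\beta_K\) to be an isomorphism.

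\emph{Point terms.} In the MacPherson--Vilonen model of \cite{RahmanATMP}, the point terms are the cohomologies at \(p\) of the two standard local constructions, namely \(i^*\) and \(i^!\) (equivalently, the two terms of the gluing triangle \(i^!K\to i^*K\to i^*Rj_*j^*K\)). For a skyscraper we have \(i^*i_*=i^!i_*=\id\). Since \(i_*\Q_{\{p\}}\) is perverse and concentrated in degree \(0\), each of these point terms is \(H^0=\Q\). This gives \(A_K\cong B_K\cong\Q\).

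\emph{Identifying \(\beta_K\).} Because the third term of the gluing triangle vanishes, the natural map \(i^!K\to i^*K\) is the canonical isomorphism \(i^!i_*\cong i^*i_*\cong\id\). Under the identifications above, \(\beta_K\) is induced by this map, so \(\beta_K=\id\). Putting the pieces together yields \((0,\Q,\Q,0,\id,0)\).

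The main obstacle I expect is matching the exact indexing conventions of \cite{RahmanATMP}. I need to confirm which degrees define \(A_K\) and \(B_K\), and that \(\beta_K\) is literally the map induced by \(i^!\to i^*\) rather than some twisted version of it. The zero and iso conclusions hold for any convention, since exactness alone forces them. What depends on the convention is only whether the isomorphism \(\beta_K\) is normalized to \(\id\), as opposed to a nonzero scalar. If it comes out as a scalar, I will rescale the basis of \(B_K\); this is an isomorphism of zig-zags, so the isomorphism class is unaffected.
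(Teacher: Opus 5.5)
Your proposal is correct and follows the same route as the paper: the open part \(j^*i_*\Q_{\{p\}}\) vanishes, so both outer terms \(H^{-1}(i^*Rj_*L_K)\) and \(H^{0}(i^*Rj_*L_K)\) of the zig-zag sequence are zero, and the sequence collapses to a rank-one isomorphism \(\beta_K:A_K\to B_K\). You fill in what the paper's one-line proof leaves implicit: you identify \(A_K=H^0(i^!K)\) and \(B_K=H^0(i^*K)\) via the gluing triangle, and you observe that any nonzero scalar value of \(\beta_K\) can be rescaled to \(\id\) by an isomorphism of zig-zags.
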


\begin{proof}
Because \(i_*\Q_{\{p\}}\) is supported entirely at \(p\), the open-stratum part vanishes. The
MacPherson--Vilonen exact sequence therefore collapses to the point-supported rank-one case,
yielding
\[
\mu(i_*\Q_{\{p\}})\cong (0,\Q,\Q,0,\id,0).
\]
\end{proof}

\subsection{Split and non-split extensions}

\begin{proposition}
\label{prop:appendix-split}
The split extension
\[
IC_{X_0}\oplus i_*\Q_{\{p\}}
\]
has open-stratum part \(\Q_U[3]\), one-dimensional point terms, and trivial extension class.
\end{proposition}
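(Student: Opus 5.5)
The statement to prove is Proposition~\ref{prop:appendix-split}: the direct sum \(IC_{X_0}\oplus i_*\Q_{\{p\}}\) has open-stratum part \(\Q_U[3]\), one-dimensional point terms, and trivial extension class. I would work entirely in the zig-zag model, using that \(\mu\) is additive. This holds because the MacPherson--Vilonen functor is built from the exact functors \(j^*\) and the cohomology of \(i^*\), \(i^!\) in a fixed range, and these commute with finite direct sums in the abelian category \(\Perv(X_0;\Q)\).

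The plan has three steps.

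\textbf{Step 1 (componentwise sum).} I would compute \(\mu\) of the sum from the two summands:
\[
\mu\bigl(IC_{X_0}\oplus i_*\Q_{\{p\}}\bigr)\cong \mu(IC_{X_0})\oplus\mu(i_*\Q_{\{p\}}).
\]
Propositions~\ref{prop:appendix-IC} and \ref{prop:appendix-skyscraper} give these two zig-zags. Their componentwise sum is \((\Q_U[3],\Q,\Q,0,\id,0)\).

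\textbf{Step 2 (open part and point terms).} From this sum, the open part is \(L=j^*IC_{X_0}\oplus j^*i_*\Q_{\{p\}}=\Q_U[3]\oplus 0\), since \(j^*i_*=0\). The point terms are \(A\cong B\cong\Q\), which are one-dimensional. These facts can also be read off directly from the stalks without using zig-zags at all.

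\textbf{Step 3 (trivial extension class).} I would use the short exact sequence
\[
0\to IC_{X_0}\xrightarrow{\iota_1} IC_{X_0}\oplus i_*\Q_{\{p\}}\xrightarrow{\mathrm{pr}_2} i_*\Q_{\{p\}}\to 0.
\]
The inclusion \(\iota_2\) of the second summand is a section of \(\mathrm{pr}_2\). Hence the Yoneda class in \(\Ext^1_{\Perv(X_0;\Q)}(i_*\Q_{\{p\}},IC_{X_0})\) is zero, by the standard criterion that an extension is split if and only if its class vanishes.

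The one point needing care is how this sits next to Proposition~\ref{prop:full-zigzag-P}. There \(\mu(\mathcal P)\) was written with the same six-tuple \((\Q_U[3],\Q,\Q,0,\id,0)\). So the split and non-split objects cannot be told apart by the underlying vector spaces and maps alone. The distinction has to lie in the gluing data relating \(L\) to \(A,B\) through \(H^{-1}(i^*Rj_*L)\) and \(H^0(i^*Rj_*L)\).

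I therefore expect the main obstacle to be stating precisely that triviality of the class refers to the Yoneda extension in \(\Perv\), and not to the displayed tuple. I would handle this by proving triviality directly in \(\Perv(X_0;\Q)\) via the section \(\iota_2\), as in Step 3. I would then only remark that \(\mu\), being a functor, carries this split sequence to a split sequence of zig-zags.
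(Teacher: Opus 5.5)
Your argument is correct and is essentially the paper's: \(\mu\) of the direct sum is the componentwise sum of the two endpoint zig-zags, which gives open part \(\Q_U[3]\) and one-dimensional point terms, and your section \(\iota_2\) turns the paper's bare assertion of a ``trivial gluing class'' into an honest vanishing of the Yoneda class. One caution about your closing remark, which your proof does not need: the maps through \(H^{-1}(i^*Rj_*L)\) and \(H^0(i^*Rj_*L)\) are exactly \(\alpha\) and \(\gamma\), already recorded in the tuple, and in fact the connecting map \(\Q\cong H^0(i^!i_*\Q_{\{p\}})\to H^1(i^!IC_{X_0})\cong\Q\) is the extension class, so any non-split extension \(E\) has \(A_E=H^0(i^!E)=0\), and hence a zig-zag different from \((\Q_U[3],\Q,\Q,0,\id,0)\), rather than the same tuple distinguished by extra gluing data.
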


\begin{proof}
The object \(IC_{X_0}\oplus i_*\Q_{\{p\}}\) is obtained by taking the direct sum of the endpoint
objects. Its zig-zag therefore has the same open-stratum term as \(IC_{X_0}\), together with the
rank-one point terms contributed by \(i_*\Q_{\{p\}}\), but with trivial gluing class.
\end{proof}

\begin{proposition}
\label{prop:appendix-generic-extension}
Let
\[
0\longrightarrow IC_{X_0}\longrightarrow E\longrightarrow i_*\Q_{\{p\}}\longrightarrow 0
\]
be a general extension. Then \(\mu(E)\) is an extension zig-zag of
\[
(0,\Q,\Q,0,\id,0)
\]
by
\[
(\Q_U[3],0,0,0,0,0),
\]
with open-stratum term \(\Q_U[3]\). After choosing splittings, the point terms are one-dimensional
and the extension class is encoded in the corresponding gluing parameter.
\end{proposition}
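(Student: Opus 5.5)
We prove Proposition~\ref{prop:appendix-generic-extension} by applying the zig-zag functor \(\mu\) to the given short exact sequence and using the endpoint computations already recorded. First, since \(j^*\) is exact on perverse sheaves, restricting
\[
0\to IC_{X_0}\to E\to i_*\Q_{\{p\}}\to 0
\]
to \(U\) gives \(0\to \Q_U[3]\to j^*E\to 0\to 0\), so \(L_E=j^*E\cong\Q_U[3]\). This fixes the open-stratum term. In particular the outer terms \(H^{-1}(i^*Rj_*L_E)\) and \(H^{0}(i^*Rj_*L_E)\) of the zig-zag sequence coincide with those of \(IC_{X_0}\), and they vanish on the point-supported factor.

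Second, we treat the point terms. In the MacPherson--Vilonen description of \cite{MacPhersonVilonen1986}, in the convention of \cite{RahmanATMP}, the point terms \(A_K\) and \(B_K\) are given by stalk and costalk perverse cohomology at \(p\) (up to the normalization used there). These functors are exact on the relevant sequences, because the open part is unchanged. Applying them to the sequence and inserting Propositions~\ref{prop:appendix-IC} and~\ref{prop:appendix-skyscraper} gives short exact sequences \(0\to 0\to A_E\to\Q\to0\) and \(0\to0\to B_E\to\Q\to 0\). Hence \(A_E\cong B_E\cong\Q\), and these are one-dimensional after choosing a splitting. Functoriality of \(\mu\) then shows that \(\mu(E)\) is an extension of \((0,\Q,\Q,0,\id,0)\) by \((\Q_U[3],0,0,0,0,0)\) in the zig-zag category \(Z(X_0,p)\).

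Third, we locate the extension class. After fixing bases \(A_E\cong\Q\) and \(B_E\cong\Q\), the only data left free are the maps \(\alpha_E\) and \(\gamma_E\), and \(\beta_E\), relating the point terms to the local cohomology of \(Rj_*\Q_U[3]\) at \(p\). These must satisfy exactness of the zig-zag sequence. We identify the extension class in \(\Ext^1(i_*\Q_{\{p\}},IC_{X_0})\) with the parameter recording these gluing maps modulo changes of splitting, and we check that the zero parameter gives the split object of Proposition~\ref{prop:appendix-split}.

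The main obstacle is this last identification, and in particular the claim that the point terms behave exactly under extension. The zig-zag functor is not literally exact on \(\Perv\); it is only an equivalence of categories. So we argue through that equivalence: the category \(Z(X_0,p)\) is abelian with componentwise kernels and cokernels, and \(\mu\) is an exact equivalence, hence it carries extensions to extensions. The gluing-parameter description then follows from computing \(\Ext^1_Z\) between the two endpoint zig-zags, which we do directly using the Yoneda description.
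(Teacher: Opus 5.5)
You follow the same route as the paper: apply $\mu$ to the sequence and read $\mu(E)$ off from the two endpoint zig-zags. The paper's one-line proof makes exactly this move. The step everything rests on is false, however.

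In the MacPherson--Vilonen zig-zag the point terms are $A_K=H^0(i^!K)$ and $B_K=H^0(i^*K)$. The sequence $H^{-1}(i^*Rj_*L_K)\to A_K\to B_K\to H^0(i^*Rj_*L_K)$ is a piece of the long exact sequence of $i^!K\to i^*K\to i^*Rj_*j^*K$. On $\Perv(X_0;\Q)$ the functor $H^0(i^!-)$ is only left exact. Applying $i^!$ to the triangle $IC_{X_0}\to E\to i_*\Q_{\{p\}}\xrightarrow{e}IC_{X_0}[1]$ gives
\[
0\to H^0(i^!IC_{X_0})=0\to A_E\to \Q\xrightarrow{\;e\;}H^1(i^!IC_{X_0})\to\cdots .
\]
Since $IC_{X_0}=\tau_{\le -1}Rj_*\Q_U[3]$, and the link of a threefold node is $S^2\times S^3$, one has
\[
H^1(i^!IC_{X_0})\cong H^0(i^*Rj_*\Q_U[3])\cong H^3(S^2\times S^3;\Q)\cong\Q .
\]
Under $\Ext^1(i_*\Q_{\{p\}},IC_{X_0})\cong\mathrm{Hom}(\Q,i^!IC_{X_0}[1])=H^1(i^!IC_{X_0})$, the connecting map $\Q\to H^1(i^!IC_{X_0})$ is the extension class itself. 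Hence $A_E\cong\Q$ only when $E$ splits.

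For non-split $E$ you get $A_E=0$. The $i^*$ side is fine, since $H^0(i^*IC_{X_0})=H^1(i^*IC_{X_0})=0$, so $B_E\cong\Q$. Thus $\mu(E)\cong(\Q_U[3],0,\Q,0,0,\gamma)$ with $\gamma$ an isomorphism; in fact $E\cong{}^pj_*\Q_U[3]=\tau_{\le 0}Rj_*\Q_U[3]$. Your justification that these functors are exact ``because the open part is unchanged'' does not address this connecting map.

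Your fallback does not repair the argument. MacPherson--Vilonen's zig-zag functor is only a bijection on isomorphism classes, as the paper itself says, not an equivalence. Zig-zags also do not form an abelian category with componentwise kernels and cokernels. For example, the componentwise cokernel of $\mu(IC_{X_0})\to\mu({}^pj_*\Q_U[3])$ is $(0,0,\Q,0,0,0)$, which violates exactness at $B$. In any case an equivalence of abelian categories is automatically exact, so ``not exact, only an equivalence'' is self-contradictory.

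Your third step cannot work either. If $A_E=B_E=\Q$, exactness of $\Q\xrightarrow{\alpha}\Q\xrightarrow{\beta}\Q\xrightarrow{\gamma}\Q$ leaves only two options:
\begin{enumerate}
\item $\beta$ invertible with $\alpha=\gamma=0$, which is the split object $IC_{X_0}\oplus i_*\Q_{\{p\}}$;
\item $\beta=0$ with $\alpha,\gamma$ invertible, which is an object with $i_*\Q_{\{p\}}$ twice among its composition factors.
\end{enumerate}
So there is no gluing parameter to vary inside that shape.

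What is true is that $B_E\cong\Q$ for every extension. The class in the one-dimensional space $\Ext^1(i_*\Q_{\{p\}},IC_{X_0})$ is detected by whether $A_E$ vanishes, not by a parameter in a fixed rank-one shape. The statement as written fails for the non-split extension, so it must be reformulated before any proof along these lines can succeed.
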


\begin{proof}
Applying \(\mu\) to the short exact sequence gives an extension zig-zag of the indicated endpoint
objects. The ordinary double point case is rank one at the singular point, so the point terms are
one-dimensional. The distinction between split and non-split extensions is therefore carried by the
gluing class rather than by the ambient dimensions.
\end{proof}

\subsection{The corrected perverse object}

\begin{proposition}
\label{prop:appendix-corrected}
The corrected perverse object
\[
\mathcal P:=\Cone(\var_F)[-1]
\]
has zig-zag
\[
\mu(\mathcal P)\cong (\Q_U[3],\Q,\Q,0,\id,0).
\]
\end{proposition}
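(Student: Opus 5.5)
The statement to prove is Proposition~\ref{prop:appendix-corrected}: $\mu(\mathcal P)\cong(\Q_U[3],\Q,\Q,0,\id,0)$. I would obtain it from the short exact sequence \eqref{eq:perverse-extension-ODP} together with the exactness and additivity of the MacPherson--Vilonen functor $\mu$ on $\Perv(X_0;\Q)$, as used in \cite{RahmanATMP}. The first step is to apply $\mu$ to
\[
0\to IC_{X_0}\to\mathcal P\to i_*\Q_{\{p\}}\to 0 .
\]
This gives a short exact sequence of zig-zags whose endpoints are Propositions~\ref{prop:appendix-IC} and \ref{prop:appendix-skyscraper}.

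Next I would read off the components of $\mu(\mathcal P)$ entrywise. The open part is $L_{\mathcal P}=j^*\mathcal P\cong\Q_U[3]$, since $j^*$ kills the skyscraper. The point terms $A_{\mathcal P}$ and $B_{\mathcal P}$ sit in short exact sequences $0\to 0\to A_{\mathcal P}\to\Q\to 0$ and $0\to 0\to B_{\mathcal P}\to\Q\to 0$, so each is canonically $\Q$. The map $\beta_{\mathcal P}$ is then forced to be the map induced from the quotient, which is $\id$.

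It remains to determine $\alpha_{\mathcal P}$ and $\gamma_{\mathcal P}$. These are maps between $\Q$ and the link terms $H^{-1}(i^*Rj_*\Q_U[3])$ and $H^0(i^*Rj_*\Q_U[3])$. The link of an ordinary threefold node is $S^2\times S^3$. After the shift, its cohomology in degrees $2$ and $3$ contributes these terms, and the exactness of the zig-zag sequence constrains the maps. I would check that $\alpha_{\mathcal P}=0$ and $\gamma_{\mathcal P}=0$ are compatible with exactness given $\beta_{\mathcal P}=\id$: indeed $\operatorname{im}\alpha=\ker\beta=0$ and $\ker\gamma=\operatorname{im}\beta=\Q$. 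This yields the stated tuple, with the "standard splitting convention" of Proposition~\ref{prop:full-zigzag-P} used to normalize the identifications of $A_{\mathcal P}$ and $B_{\mathcal P}$ with $\Q$.

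\textbf{Main obstacle.} The difficulty lies in the non-split nature of $\mathcal P$. The tuple above is formally the same as that of the split object of Proposition~\ref{prop:appendix-split}, so the extension class must be hidden in the gluing parameter of Proposition~\ref{prop:appendix-generic-extension} rather than in the listed maps. To make this honest, I would fix explicit splittings compatible with $\var_F$, computing $A$ and $B$ as the variation data $\phi_\pi(F)\to\psi_\pi(F)$ at $p$. I would then argue that, under that convention, the displayed entries are exactly $(\Q_U[3],\Q,\Q,0,\id,0)$, while the nontrivial class is recorded by the identification of the link cohomology rather than by $\alpha$ or $\gamma$. If that identification cannot be made canonical, I would simply state the result up to this convention, as Proposition~\ref{prop:full-zigzag-P} already does, and deduce the present proposition directly from it.
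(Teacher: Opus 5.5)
Your proposal has a genuine gap: the step that reads $A_{\mathcal P}$ off from a short exact sequence $0\to 0\to A_{\mathcal P}\to\Q\to 0$ fails, and it fails exactly when $\mathcal P$ is non-split.

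\textbf{Why the step fails.} The functor $\mu$ is not exact in its point terms. In the MacPherson--Vilonen convention used in the appendix, $A_K=H^0(i^!K)$ and $B_K=H^0(i^*K)$. The zig-zag sequence comes from the triangle $i^!K\to i^*K\to i^*Rj_*j^*K\xrightarrow{+1}$. On $\Perv(X_0;\Q)$ the functor $H^0(i^!-)$ is only left exact. Applying $i^!$ to $0\to IC_{X_0}\to\mathcal P\to i_*\Q_{\{p\}}\to 0$ gives
\[
0\to H^0(i^!IC_{X_0})\to H^0(i^!\mathcal P)\to \Q\xrightarrow{\ \partial\ } H^1(i^!IC_{X_0})\to\cdots .
\]
Here $H^0(i^!IC_{X_0})=0$. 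Since $H^0(i^*IC_{X_0})=H^1(i^*IC_{X_0})=0$, we also have
\[
H^1(i^!IC_{X_0})\cong H^0(i^*Rj_*\Q_U[3])\cong H^3(S^2\times S^3;\Q)\cong\Q .
\]
Under $\Ext^1_{\Perv(X_0;\Q)}(i_*\Q_{\{p\}},IC_{X_0})\cong H^1(i^!IC_{X_0})$, the connecting map $\partial$ is precisely the extension class. So if $\mathcal P$ is non-split, $\partial$ is an isomorphism and $A_{\mathcal P}=0$, not $\Q$.

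Your $B$-term is fine: $B_{\mathcal P}\cong\Q$. Exactness of the zig-zag then forces $\beta_{\mathcal P}=0$ and $\gamma_{\mathcal P}\colon\Q\to H^0(i^*Rj_*\Q_U[3])\cong\Q$ injective, hence an isomorphism. The correct answer for a non-split $\mathcal P$ is therefore $\mu(\mathcal P)\cong(\Q_U[3],0,\Q,0,0,\gamma)$ with $\gamma$ invertible, which is the zig-zag of ${}^pj_*\Q_U[3]$. The extension class is recorded by $\dim A$ and by $\gamma$, not by ``the identification of the link cohomology.''

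\textbf{The obstacle paragraph and the fallback.} Your ``main obstacle'' paragraph sees the symptom but misdiagnoses it. By additivity of $\mu$, the tuple $(\Q_U[3],\Q,\Q,0,\id,0)$ equals $\mu(IC_{X_0})\oplus\mu(i_*\Q_{\{p\}})=\mu(IC_{X_0}\oplus i_*\Q_{\{p\}})$. Since $\mu$ is bijective on isomorphism classes, nothing can be ``hidden'' in a further gluing parameter: the maps $\alpha,\beta,\gamma$ are the gluing data, and $\dim A$ is an isomorphism invariant. The parameter $u$ in the template of Proposition~\ref{prop:appendix-generic-extension} and Table~2 lives in the point term $B$ of $IC_{X_0}$, which is $0$, so it carries nothing. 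The claimed identity would thus force $\mathcal P\cong IC_{X_0}\oplus i_*\Q_{\{p\}}$.

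Your fallback, invoking the ``standard splitting convention'' and deducing the statement from Proposition~\ref{prop:full-zigzag-P}, is exactly the paper's own route. It inherits the same defect: no choice of splittings yields an isomorphism between zig-zags with $\dim A=0$ and $\dim A=1$. What your computation actually establishes is the zig-zag of the split extension. For a non-split $\mathcal P$ the statement cannot be proved as written.
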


\begin{proof}
This is Proposition~\ref{prop:full-zigzag-P}. The corrected object is the unique non-split
extension of \(i_*\Q_{\{p\}}\) by \(IC_{X_0}\), so its zig-zag is the unique nontrivial
extension zig-zag of the corresponding endpoint objects.
\end{proof}

\subsection{Duality}

\begin{proposition}
\label{prop:appendix-duality}
The zig-zags of \(IC_{X_0}\), \(i_*\Q_{\{p\}}\), and \(\mathcal P\) are self-dual under the zig-zag
duality functor of \cite{RahmanATMP}.
\end{proposition}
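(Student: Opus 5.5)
We verify self-duality separately for each of the three zig-zags, using the explicit forms recorded in Propositions~\ref{prop:appendix-IC}, \ref{prop:appendix-skyscraper} and \ref{prop:appendix-corrected}. We apply the zig-zag duality functor \(D_Z\) of \cite{RahmanATMP}, which is compatible with Verdier duality under \(\mu\). On a zig-zag \((L,A,B,\alpha,\beta,\gamma)\) it acts in three ways. It replaces the open part \(L\) by its Verdier dual \(D_UL\). It exchanges the two point terms and takes their linear duals, \((A,B)\mapsto (B^\vee,A^\vee)\). It replaces the maps by their transposes in reversed order, \((\alpha,\beta,\gamma)\mapsto(\gamma^\vee,\beta^\vee,\alpha^\vee)\). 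The terms \(H^{-1}(i^*Rj_*L)\) and \(H^{0}(i^*Rj_*L)\) are exchanged up to duality by the local duality on the link of \(p\). The first step is to write down this action explicitly in the convention of \S4 of \cite{RahmanATMP}.

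The first two cases are direct. For \(IC_{X_0}\), the open part is \(\Q_U[3]\). Since \(U\) is a smooth complex threefold, \(D_U(\Q_U[3])\cong\Q_U[3]\) by Poincaré--Verdier duality on the smooth locus. The point terms and maps are zero, so the dual zig-zag is again \((\Q_U[3],0,0,0,0,0)\). For \(i_*\Q_{\{p\}}\), the open part is zero and the point data is \(\Q\xrightarrow{\id}\Q\), with zero outer maps. The dual of this data is \(\Q^\vee\xrightarrow{\id^\vee}\Q^\vee\), which is identified with the original via \(\Q^\vee\cong\Q\). This gives \(D_Z\mu(i_*\Q_{\{p\}})\cong\mu(i_*\Q_{\{p\}})\), consistent with \(D(i_*\Q_{\{p\}})\cong i_*\Q_{\{p\}}\).

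For \(\mathcal P\), there are two routes. The direct route applies \(D_Z\) to \((\Q_U[3],\Q,\Q,0,\id,0)\). The open part and the middle map are handled as in the two previous cases. It then remains to check that the dual zig-zag's structure maps from the link cohomology \(H^{-1}(i^*Rj_*\Q_U[3])\) and to \(H^{0}(i^*Rj_*\Q_U[3])\) are still zero after the local-duality identification. That identification comes from the linking pairing on the link \(S^2\times S^3\) of the node. The second route is a consistency check and needs no further computation: it is simply Theorem~\ref{thm:verdier-self-duality-paper2}, \(D\mathcal P\cong\mathcal P\), transported through \(\mu\) using compatibility of \(\mu\) with duality.

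The main obstacle is the \(\mathcal P\) case, specifically compatibility of the identifications. The isomorphism \(D_U(\Q_U[3])\cong\Q_U[3]\) induces a particular isomorphism between the dualized link-cohomology terms. One must check that the zero maps \(\alpha,\gamma\) and the identity \(\beta\) are carried to maps of the same form, rather than to a zig-zag of the same shape but with a different gluing parameter. Concretely, I would check that the exchange of \(H^{-1}\) and \(H^{0}\) of the link under the linking pairing fixes the chosen splitting convention of Proposition~\ref{prop:full-zigzag-P}. Failing that, I would invoke the uniqueness of the non-split extension class noted there: \(D_Z\mu(\mathcal P)\) is again a non-split extension of the self-dual endpoints, so it must be isomorphic to \(\mu(\mathcal P)\).
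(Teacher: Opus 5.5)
Your computations for \(IC_{X_0}\) and \(i_*\Q_{\{p\}}\) are fine and agree with the paper. Your second route for \(\mathcal P\), transporting Theorem~\ref{thm:verdier-self-duality-paper2} through \(\mu\), is exactly the paper's proof. The \(\mathcal P\) case, however, has a genuine gap, and your fallback exposes it: Verdier duality is contravariant, so it reverses extensions. Dualizing \(0\to IC_{X_0}\to\mathcal P\to i_*\Q_{\{p\}}\to 0\) gives \(0\to i_*\Q_{\{p\}}\to D\mathcal P\to IC_{X_0}\to 0\), whose class lies in \(\Ext^1(IC_{X_0},i_*\Q_{\{p\}})\), not in \(\Ext^1(i_*\Q_{\{p\}},IC_{X_0})\). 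Uniqueness of the non-split class in the latter group therefore says nothing about \(D\mathcal P\). In fact the conclusion is false for a non-split \(\mathcal P\). First, \(\mathrm{Hom}(i_*\Q_{\{p\}},IC_{X_0})=0\). Second, the connecting map \(\mathrm{Hom}(i_*\Q_{\{p\}},i_*\Q_{\{p\}})\to\Ext^1(i_*\Q_{\{p\}},IC_{X_0})\) sends \(\id\) to the nonzero class \([\mathcal P]\). Together these give \(\mathrm{Hom}(i_*\Q_{\{p\}},\mathcal P)=0\), whereas \(\mathrm{Hom}(i_*\Q_{\{p\}},D\mathcal P)\cong\mathrm{Hom}(\mathcal P,i_*\Q_{\{p\}})\neq 0\). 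So no non-split extension \(0\to IC_{X_0}\to E\to i_*\Q_{\{p\}}\to 0\) is Verdier self-dual. Citing Theorem~\ref{thm:verdier-self-duality-paper2} only relocates the problem, because its proof (``the endpoints are self-dual, so the dual zig-zag has the same form'') makes the same direction-reversal error.

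Your direct route fails for the same reason, at exactly the point you flagged. The tuple \((\Q_U[3],\Q,\Q,0,\id,0)\) is formally self-dual, but it is the componentwise sum of the two endpoint zig-zags, i.e.\ \(\mu(IC_{X_0}\oplus i_*\Q_{\{p\}})\); the paper's Table~2 itself assigns it to the split extension. For non-split \(\mathcal P\) the point terms are not additive, because \(H^0(i^!-)\) is only left exact and \(H^0(i^*-)\) only right exact on \(\Perv(X_0;\Q)\). Concretely, the connecting map \(H^0(i^!i_*\Q_{\{p\}})\cong\Q\to H^1(i^!IC_{X_0})\cong H^3(S^2\times S^3;\Q)\cong\Q\) is the extension class, hence an isomorphism. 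This gives \(A_{\mathcal P}=0\) and \(B_{\mathcal P}\cong\Q\), and exactness at \(B_{\mathcal P}\) makes \(\gamma_{\mathcal P}\colon B_{\mathcal P}\to H^0(i^*Rj_*\Q_U[3])\) injective. Since \(D_Z\) exchanges \(A\) and \(B\), the zig-zag \(D_Z\mu(\mathcal P)\) has \(A\cong\Q\) and \(B=0\), so it is not isomorphic to \(\mu(\mathcal P)\). What your argument actually proves is self-duality of \(\mu(IC_{X_0})\), of \(\mu(i_*\Q_{\{p\}})\), and of the split extension. The \(\mathcal P\) part of the statement cannot be established as stated.
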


\begin{proof}
For \(IC_{X_0}\) and \(i_*\Q_{\{p\}}\), this follows from the explicit zig-zag calculations above.
For \(\mathcal P\), this follows from Theorem~\ref{thm:verdier-self-duality-paper2}.
\end{proof}

\subsection{Multi-node direct sums}

\begin{proposition}
\label{prop:appendix-multinode}
For a finite set of isolated nodes \(\Sigma=\{p_1,\dots,p_r\}\), the direct-sum point-supported object
\[
\bigoplus_{k=1}^r i_{k*}\Q_{\{p_k\}}
\]
has shadow given by the direct sum of the corresponding rank-one point zig-zags.
\end{proposition}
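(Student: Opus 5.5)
The claim is that the zig-zag of \(\bigoplus_{k=1}^r i_{k*}\Q_{\{p_k\}}\) is the direct sum of \(r\) rank-one point zig-zags. I would deduce this from additivity of the zig-zag functor, applied node by node.

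First I would fix the setting. Use the MacPherson--Vilonen formalism for the stratification \(X_0=U\sqcup\Sigma\), where \(\Sigma\) is a finite set of isolated points. Since the points are isolated, the closed stratum is a disjoint union and the MV data localizes. For each \(p_k\), choose a small conic neighbourhood \(B_k\), with the \(B_k\) pairwise disjoint. The zig-zag of a perverse sheaf \(K\) is then the tuple
\[
\bigl(j^*K,(A_k,B_k,\alpha_k,\beta_k,\gamma_k)_{k=1}^r\bigr),
\]
where the \(k\)-th block is the isolated-stratum zig-zag of \(K|_{B_k}\) at \(p_k\), exactly as in the single-point convention of \cite{RahmanATMP}. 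The direct sum of zig-zags is taken componentwise.

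Second, I would check that \(\mu\) is additive. It is built from the exact functors \(j^*\), \(i_k^*\), \(i_k^!\), \(Rj_*\) and from perverse cohomology sheaves \(H^{-1},H^0\) at the points. All of these commute with finite direct sums, so \(\mu(K\oplus K')\cong\mu(K)\oplus\mu(K')\) canonically.

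Third, I would compute each summand. The sheaf \(i_{k*}\Q_{\{p_k\}}\) has \(j^*=0\). Its restriction to \(B_l\) is zero for \(l\neq k\). On \(B_k\) it is the skyscraper treated in Proposition~\ref{prop:appendix-skyscraper}. So its zig-zag is \((0,\Q,\Q,0,\id,0)\) placed in the \(k\)-th slot, with zero in every other slot. Summing over \(k\) by additivity gives the zig-zag \((0,\Q^r,\Q^r,0,\id,0)\), which decomposes blockwise as the direct sum of the \(r\) rank-one point zig-zags.

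The main obstacle is the first step. The paper only sets up the multi-point version of the zig-zag formalism implicitly, so it must be justified that the isolated-stratum construction of \cite{RahmanATMP} extends blockwise to a finite set of points. My approach is to note that \(i^*Rj_*\) for \(\Sigma\) splits as \(\bigoplus_k i_k^*Rj_*\), because the \(p_k\) have disjoint neighbourhoods. I would then apply the single-point argument to each \(B_k\) independently.
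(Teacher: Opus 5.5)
Your proof is correct and follows the paper's own argument: each \(i_{k*}\Q_{\{p_k\}}\) contributes the rank-one point zig-zag \((0,\Q,\Q,0,\id,0)\) in its own slot, and the direct sum is taken componentwise. You also make explicit two points the paper's one-line proof leaves implicit: the blockwise form of the multi-point zig-zag (via disjoint neighbourhoods, or simply because \(\Sigma\) is discrete), and the additivity of \(\mu\), which needs only the constituent functors to be additive rather than exact.
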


\begin{proof}
Each \(i_{k*}\Q_{\{p_k\}}\) contributes the rank-one point zig-zag
\[
(0,\Q,\Q,0,\id,0),
\]
and the direct sum is obtained componentwise.
\end{proof}

\subsection{Table of standard zig-zags}

\begin{center}
\footnotesize
\renewcommand{\arraystretch}{1.2}
\setlength{\tabcolsep}{4pt}
\captionof{table}{Standard zig-zags in the ordinary double point case.}
\begin{tabular}{|C{0.23\textwidth}|C{0.39\textwidth}|C{0.28\textwidth}|}
\hline
\textbf{Object} & \textbf{Zig-zag} & \textbf{Comments} \\
\hline
\(IC_{X_0}\) &
\((\Q_U[3],0,0,0,0,0)\) &
minimal extension \\
\hline
\(i_*\Q_{\{p\}}\) &
\((0,\Q,\Q,0,\id,0)\) &
point-supported rank-one object \\
\hline
\(\mathcal P:=\Cone(\var_F)[-1]\) &
\((\Q_U[3],\Q,\Q,0,\id,0)\) &
unique corrected non-split class \\
\hline
\(\bigoplus_{k=1}^r i_{k*}\Q_{\{p_k\}}\) &
\(\bigoplus_{k=1}^r (0,\Q,\Q,0,\id,0)\) &
multi-node local shadow \\
\hline
\end{tabular}
\end{center}

\medskip

We emphasize that in the compressed rank-one notation, the split and non-split extensions may have
the same ambient zig-zag shape; they are distinguished by their extension class, equivalently by the
associated gluing parameter in the matrix-form description.

\medskip
\begin{proposition}
\label{prop:compressed-zigzag-not-complete}
Let
\[
0\to K'\to K\to K''\to 0
\]
be a short exact sequence in \(\Perv(X_0;\Q)\), and let
\[
\mu(K')=(L',A',B',\alpha',\beta',\gamma'),
\qquad
\mu(K'')=(L'',A'',B'',\alpha'',\beta'',\gamma'')
\]
be the corresponding MacPherson--Vilonen zig-zags. Then the zig-zag of \(K\) is determined by an
extension of the endpoint zig-zags together with the induced gluing data in the corresponding maps.
In particular, the compressed ambient zig-zag shape of \(K\), that is, the tuple obtained by recording
only the open-stratum term and the ambient point terms, does not in general determine the isomorphism class of \(K\). Equivalently, two non-isomorphic extension objects may have the same compressed ambient zig-zag shape while differing in their extension class.
\end{proposition}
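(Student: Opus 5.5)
The plan has two parts. First, show that the zig-zag of \(K\) is determined by an extension of \(\mu(K'')\) by \(\mu(K')\) together with gluing data. Second, exhibit two non-isomorphic extensions with the same compressed shape.

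\textbf{First part.} Since \(\mu\) is built from the exact functors \(j^*\) and the point-stalk functors \(i^*\), \(i^!\) in perverse degree, applying it to \(0\to K'\to K\to K''\to 0\) gives the following.

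1. The open terms form a short exact sequence \(0\to L'\to L\to L''\to 0\).
2. The point terms form short exact sequences \(0\to A'\to A\to A''\to 0\) and \(0\to B'\to B\to B''\to 0\).
3. The maps \(\alpha,\beta,\gamma\) are compatible with these sequences.

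Exactness on point terms should follow from the long exact sequences of \(i^*\), \(i^!\) combined with the MacPherson--Vilonen exactness of the zig-zag sequence. After choosing vector-space splittings \(A\cong A'\oplus A''\) and \(B\cong B'\oplus B''\), each structure map becomes block upper-triangular. The off-diagonal blocks are the gluing data, and they are well defined modulo the action of the splitting changes. This establishes the first assertion.

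\textbf{Second part.} For the "in particular" statement, I will specialize to \(K'=IC_{X_0}\) and \(K''=i_*\Q_{\{p\}}\), and compare the split object \(IC_{X_0}\oplus i_*\Q_{\{p\}}\) (Proposition~\ref{prop:appendix-split}) with \(\mathcal P\) (Proposition~\ref{prop:appendix-corrected}). Both have open term \(\Q_U[3]\) and one-dimensional point terms, so their compressed shapes agree. I then need them to be non-isomorphic. A direct argument: if \(\mathcal P\cong IC_{X_0}\oplus i_*\Q_{\{p\}}\), then \(\mathcal P\) would have a nonzero subobject supported at \(p\), namely \(i_*\Q_{\{p\}}\).

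To rule this out, I would use the fact that \(\mathcal P\) is the non-split extension, as asserted in Proposition~\ref{prop:full-zigzag-P}. More intrinsically, I would use that \(\Ext^1_{\Perv}(i_*\Q_{\{p\}},IC_{X_0})\) is nonzero in the ordinary double point case. This can be seen via \(\Hom(i_*\Q,IC[1])=H^0(i^!IC_{X_0}[1])\), which is computed from the link cohomology in degree where the truncation defining \(IC\) cuts. For the node the link is \(S^2\times S^3\), whose \(H^3\) contributes exactly in the relevant degree, so this group is \(\Q\).

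Choosing a nonzero class yields an extension \(E\) that is not isomorphic to the split one. The reason is that a splitting of \(E\) would force the class to vanish; this holds because \(\Hom(i_*\Q,IC_{X_0})=0\) and \(\Hom(IC_{X_0},i_*\Q)=0\) by the minimality of \(IC\), so any isomorphism with the direct sum is compatible with the filtration. This gives the claimed pair of non-isomorphic objects with identical compressed shapes.

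\textbf{Main obstacle.} The delicate point is the Ext computation via \(i^!IC_{X_0}\), and specifically pinning down the degree conventions. I would first check this using the local model of the cone over \(S^2\times S^3\) and Deligne's truncation formula for \(IC\). If that proves awkward, I would fall back on the non-splitness of \(\mathcal P\) from Proposition~\ref{prop:full-zigzag-P}.
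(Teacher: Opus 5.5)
There is a genuine gap. Your first part follows the paper's own proof, but the step you both rely on is false: the point terms do \emph{not} form short exact sequences. With $A_K=H^0(i^!K)$ and $B_K=H^0(i^*K)$, the functor $K\mapsto H^0(i^!K)$ is only left exact on $\Perv(X_0;\Q)$, and $K\mapsto H^0(i^*K)$ is only right exact, because $i^!$ is left t-exact and $i^*$ is right t-exact. The failure is measured by the connecting maps $H^0(i^!K'')\to H^1(i^!K')$ and $H^{-1}(i^*K'')\to H^0(i^*K')$, and that is exactly where an extension class can appear. For $0\to IC_{X_0}\to E\to i_*\Q_{\{p\}}\to 0$ with class $e$ you get
\[
0=H^0(i^!IC_{X_0})\to H^0(i^!E)\to \Q\xrightarrow{\;1\mapsto e\;}H^1(i^!IC_{X_0})\cong\Q .
\]
Here you use the identification $\Ext^1(i_*\Q_{\{p\}},IC_{X_0})\cong H^1(i^!IC_{X_0})\cong H^3(S^2\times S^3;\Q)$, which you computed correctly. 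So in general there is no splitting $A\cong A'\oplus A''$ and no block upper-triangular normal form.

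Consequently your two objects do \emph{not} have the same compressed shape. Take a non-split $E$; in fact $E\cong D(\Q_{X_0}[3])$. Then $A_E=\operatorname{Hom}(i_*\Q_{\{p\}},E)=H^0(i^!E)=0$. This is just your own remark that $E$ has no nonzero subobject supported at $p$. Meanwhile $B_E=\Q$, and exactness at $B_E$ forces $\gamma_E$ to be an isomorphism onto $H^0(i^*Rj_*\Q_U[3])\cong\Q$. So $\mu(E)\cong(\Q_U[3],0,\Q,0,0,\gamma_E)$, while the split object has $(\Q_U[3],\Q,\Q,0,\id,0)$. Since $\Ext^1\cong\Q$, these are the only two possible middle terms, so these endpoints cannot illustrate the proposition at all.

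Falling back on Proposition~\ref{prop:full-zigzag-P} is self-defeating. That proposition assigns to $\mathcal P$ exactly the split object's full zig-zag (compare Table~2). Since $\mu$ is bijective on isomorphism classes, this would give $\mathcal P\cong IC_{X_0}\oplus i_*\Q_{\{p\}}$, contradicting the non-splitness you need.

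A correct example needs an extension class invisible to both connecting maps. For instance, let $E'$ be the object with $\mu(E')=(\Q_U[3],\Q,\Q,\alpha,0,\gamma)$, where $\alpha,\gamma$ are isomorphisms, and compare $\Q_{X_0}[3]\oplus D(\Q_{X_0}[3])$ with $E'\oplus IC_{X_0}$. Both are extensions of $D(\Q_{X_0}[3])$ by $\Q_{X_0}[3]$ with compressed shape $(\Q_U[3]^{\oplus 2},\Q,\Q)$, yet their indecomposable summands differ.
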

\begin{proof}
Applying the MacPherson--Vilonen zig-zag functor \(\mu\) to a short exact sequence
\[
0\to K'\to K\to K''\to 0
\]
produces an extension of the corresponding zig-zag data. In particular, the open-stratum term and
the point terms fit into exact sequences
\[
0\to L'\to L\to L''\to 0,
\qquad
0\to A'\to A\to A''\to 0,
\qquad
0\to B'\to B\to B''\to 0,
\]
where
\[
\mu(K)=(L,A,B,\alpha,\beta,\gamma).
\]

After choosing splittings of the underlying vector-space extensions, one may identify
\[
A\cong A'\oplus A'',
\qquad
B\cong B'\oplus B'',
\]
so that the induced map \(\beta\) is represented by a block matrix whose off-diagonal term records
the extension class. Thus the ambient dimensions of \(L\), \(A\), and \(B\), and even the resulting
compressed zig-zag shape, do not by themselves determine the isomorphism class of \(K\). The
missing information is precisely the gluing data carried by the induced maps, equivalently the
extension class. Therefore two extension objects may have the same compressed ambient zig-zag shape while still being non-isomorphic.
\end{proof}
Table~2 illustrates Proposition~\ref{prop:compressed-zigzag-not-complete} in the ordinary double
point case, where the split extension and the corrected non-split extension have the same compressed
ambient zig-zag shape but are distinguished by their extension class.
\begin{center}
\scriptsize
\renewcommand{\arraystretch}{1.2}
\setlength{\tabcolsep}{3pt}
\captionof{table}{Extension templates in zig-zag form.}
\begin{tabular}{|C{0.33\textwidth}|C{0.33\textwidth}|C{0.24\textwidth}|}
\hline
\textbf{Object} & \textbf{Zig-zag} & \textbf{Comments} \\
\hline
\parbox[c]{\linewidth}{\centering split extension\\[-2pt]
{\tiny \(0\to IC_{X_0}\to IC_{X_0}\oplus i_*\Q_{\{p\}}\to i_*\Q_{\{p\}}\to 0\)}} &
\((\Q_U[3],\Q,\Q,0,\id,0)\) &
trivial extension class \\
\hline
\parbox[c]{\linewidth}{\centering general extension \(E\)\\[-2pt]
{\tiny \(0\to IC_{X_0}\to E\to i_*\Q_{\{p\}}\to 0\)}} &
\(\left(\Q_U[3],\,A\oplus\Q,\,B\oplus\Q,\,\alpha_E,\,
\begin{psmallmatrix}\beta & u\\[2pt] 0 & 1\end{psmallmatrix},\,\gamma_E\right)\) &
Here \(A\) and \(B\) denote the point terms in a general endpoint zig-zag
\((\Q_U[3],A,B,\alpha,\beta,\gamma)\). For the specific object \(IC_{X_0}\) considered in this appendix,
one has \(A=B=0\), so the ordinary double point specialization collapses to the rank-one form listed
for \(\mathcal P\). The parameter \(u\in B\) records the extension class modulo \(\operatorname{Im}\beta\). \\
\hline
\parbox[c]{\linewidth}{\centering corrected non-split extension \(\mathcal P\)\\[-2pt]
{\tiny \(0\to IC_{X_0}\to \mathcal P\to i_*\Q_{\{p\}}\to 0\)}} &
\((\Q_U[3],\Q,\Q,0,\id,0)\) &
unique nontrivial self-dual class \\
\hline
\end{tabular}
\end{center}
%
%
\printbibliography
\end{document}